\newtheorem{theorem}{Theorem}
\newtheorem{axiom}[theorem]{Axiom}
\newtheorem{conjecture}[theorem]{Conjecture}
\newtheorem{corollary}[theorem]{Corollary}
\newtheorem{definition}[theorem]{Definition}
\newtheorem{example}[theorem]{Example}
\newtheorem{exercise}[theorem]{Exercise}
\newtheorem{lemma}[theorem]{Lemma}
\newtheorem{proposition}[theorem]{Proposition}
\newtheorem{remark}[theorem]{Remark}
\newenvironment{proof}[1][Proof]{\noindent\textbf{#1.} }{\ \rule{0.5em}{0.5em}}
\let\pdfoutput=\undefined\fi
\chardef\@x10\chardef\@xv60
\def\tcitime{
\def\@time{%
  \@minute\time\@hour\@minute\divide\@hour\@xv
  \ifnum\@hour<\@x 0\fi\the\@hour:%
  \multiply\@hour\@xv\advance\@minute-\@hour
  \ifnum\@minute<\@x 0\fi\the\@minute
  }}%
\def\x@hyperref#1#2#3{%
   % Turn off various catcodes before reading parameter 4
   \catcode`\~ = 12
   \catcode`\$ = 12
   \catcode`\_ = 12
   \catcode`\# = 12
   \catcode`\& = 12
   \y@hyperref{#1}{#2}{#3}%
}
\def\y@hyperref#1#2#3#4{%
   #2\ref{#4}#3
   \catcode`\~ = 13
   \catcode`\$ = 3
   \catcode`\_ = 8
   \catcode`\# = 6
   \catcode`\& = 4
}
\def\QCTOpt[#1]#2{%
  \def\QCTOptB{#1}
  \def\QCTOptA{#2}
}
\def\QCTNOpt#1{%
  \def\QCTOptA{#1}
  \let\QCTOptB\empty
}
\def\Qct{%
  \@ifnextchar[{%
    \QCTOpt}{\QCTNOpt}
}
\def\QCBOpt[#1]#2{%
  \def\QCBOptB{#1}%
  \def\QCBOptA{#2}%
}
\def\QCBNOpt#1{%
  \def\QCBOptA{#1}%
  \let\QCBOptB\empty
}
\def\Qcb{%
  \@ifnextchar[{%
    \QCBOpt}{\QCBNOpt}%
}
\def\PrepCapArgs{%
  \ifx\QCBOptA\empty
    \ifx\QCTOptA\empty
      {}%
    \else
      \ifx\QCTOptB\empty
        {\QCTOptA}%
      \else
        [\QCTOptB]{\QCTOptA}%
      \fi
    \fi
  \else
    \ifx\QCBOptA\empty
      {}%
    \else
      \ifx\QCBOptB\empty
        {\QCBOptA}%
      \else
        [\QCBOptB]{\QCBOptA}%
      \fi
    \fi
  \fi
}
\def\GRAPHICSPS#1{%
 \ifcase\GRAPHICSTYPE%\GRAPHICSTYPE=0
   \special{ps: #1}%
 \or%\GRAPHICSTYPE=1
   \special{language "PS", include "#1"}%
%%%\or%\GRAPHICSTYPE=2
%%%  #1%
 \fi
}%
\def\graffile#1#2#3#4{%
%%% \ifnum\GRAPHICSTYPE=\tw@
%%%  %Following if using psfig
%%%  \@ifundefined{psfig}{\input psfig.tex}{}%
%%%  \psfig{file=#1, height=#3, width=#2}%
%%% \else
  %Following for all others
  % JCS - added BOXTHEFRAME, see below
    \bgroup
	   \@inlabelfalse
       \leavevmode
       \@ifundefined{bbl@deactivate}{\def~{\string~}}{\activesoff}%
        \raise -#4 \BOXTHEFRAME{%
           \hbox to #2{\raise #3\hbox to #2{\null #1\hfil}}}%
    \egroup
}%
\def\draftbox#1#2#3#4{%
 \leavevmode\raise -#4 \hbox{%
  \frame{\rlap{\protect\tiny #1}\hbox to #2%
   {\vrule height#3 width\z@ depth\z@\hfil}%
  }%
 }%
}%
\let\nographics=\@msidraft
\newif\ifwasdraft
\def\GRAPHIC#1#2#3#4#5{%
   \ifnum\@msidraft=\@ne\draftbox{#2}{#3}{#4}{#5}%
   \else\graffile{#1}{#3}{#4}{#5}%
   \fi
}
\def\addtoLaTeXparams#1{%
    \edef\LaTeXparams{\LaTeXparams #1}}%
\newif\ifBoxFrame \BoxFramefalse
\newif\ifOverFrame \OverFramefalse
\newif\ifUnderFrame \UnderFramefalse
\def\BOXTHEFRAME#1{%
   \hbox{%
      \ifBoxFrame
         \frame{#1}%
      \else
         {#1}%
      \fi
   }%
}
\def\doFRAMEparams#1{\BoxFramefalse\OverFramefalse\UnderFramefalse\readFRAMEparams#1\end}%
\def\readFRAMEparams#1{%
 \ifx#1\end%
  \let\next=\relax
  \else
  \ifx#1i\dispkind=\z@\fi
  \ifx#1d\dispkind=\@ne\fi
  \ifx#1f\dispkind=\tw@\fi
  \ifx#1t\addtoLaTeXparams{t}\fi
  \ifx#1b\addtoLaTeXparams{b}\fi
  \ifx#1p\addtoLaTeXparams{p}\fi
  \ifx#1h\addtoLaTeXparams{h}\fi
  \ifx#1X\BoxFrametrue\fi
  \ifx#1O\OverFrametrue\fi
  \ifx#1U\UnderFrametrue\fi
  \ifx#1w
    \ifnum\@msidraft=1\wasdrafttrue\else\wasdraftfalse\fi
    \@msidraft=\@ne
  \fi
  \let\next=\readFRAMEparams
  \fi
 \next
 }%
\def\IFRAME#1#2#3#4#5#6{%
      \bgroup
      \let\QCTOptA\empty
      \let\QCTOptB\empty
      \let\QCBOptA\empty
      \let\QCBOptB\empty
      #6%
      \parindent=0pt
      \leftskip=0pt
      \rightskip=0pt
      \setbox0=\hbox{\QCBOptA}%
      \@tempdima=#1\relax
      \ifOverFrame
          % Do this later
          \typeout{This is not implemented yet}%
          \show\HELP
      \else
         \ifdim\wd0>\@tempdima
            \advance\@tempdima by \@tempdima
            \ifdim\wd0 >\@tempdima
               \setbox1 =\vbox{%
                  \unskip\hbox to \@tempdima{\hfill\GRAPHIC{#5}{#4}{#1}{#2}{#3}\hfill}%
                  \unskip\hbox to \@tempdima{\parbox[b]{\@tempdima}{\QCBOptA}}%
               }%
               \wd1=\@tempdima
            \else
               \textwidth=\wd0
               \setbox1 =\vbox{%
                 \noindent\hbox to \wd0{\hfill\GRAPHIC{#5}{#4}{#1}{#2}{#3}\hfill}\\%
                 \noindent\hbox{\QCBOptA}%
               }%
               \wd1=\wd0
            \fi
         \else
            \ifdim\wd0>0pt
              \hsize=\@tempdima
              \setbox1=\vbox{%
                \unskip\GRAPHIC{#5}{#4}{#1}{#2}{0pt}%
                \break
                \unskip\hbox to \@tempdima{\hfill \QCBOptA\hfill}%
              }%
              \wd1=\@tempdima
           \else
              \hsize=\@tempdima
              \setbox1=\vbox{%
                \unskip\GRAPHIC{#5}{#4}{#1}{#2}{0pt}%
              }%
              \wd1=\@tempdima
           \fi
         \fi
         \@tempdimb=\ht1
         %\advance\@tempdimb by \dp1
         \advance\@tempdimb by -#2
         \advance\@tempdimb by #3
         \leavevmode
         \raise -\@tempdimb \hbox{\box1}%
      \fi
      \egroup%
}%
\def\DFRAME#1#2#3#4#5{%
  \vspace\topsep
  \hfil\break
  \bgroup
     \leftskip\@flushglue
	 \rightskip\@flushglue
	 \parindent\z@
	 \parfillskip\z@skip
     \let\QCTOptA\empty
     \let\QCTOptB\empty
     \let\QCBOptA\empty
     \let\QCBOptB\empty
	 \vbox\bgroup
        \ifOverFrame 
           #5\QCTOptA\par
        \fi
        \GRAPHIC{#4}{#3}{#1}{#2}{\z@}%
        \ifUnderFrame 
           \break#5\QCBOptA
        \fi
	 \egroup
  \egroup
  \vspace\topsep
  \break
}%
\def\FFRAME#1#2#3#4#5#6#7{%
 %If float.sty loaded and float option is 'h', change to 'H'  (gp) 1998/09/05
  \@ifundefined{floatstyle}
    {%floatstyle undefined (and float.sty not present), no change
     \begin{figure}[#1]%
    }
    {%floatstyle DEFINED
	 \ifx#1h%Only the h parameter, change to H
      \begin{figure}[H]%
	 \else
      \begin{figure}[#1]%
	 \fi
	}
  \let\QCTOptA\empty
  \let\QCTOptB\empty
  \let\QCBOptA\empty
  \let\QCBOptB\empty
  \ifOverFrame
    #4
    \ifx\QCTOptA\empty
    \else
      \ifx\QCTOptB\empty
        \caption{\QCTOptA}%
      \else
        \caption[\QCTOptB]{\QCTOptA}%
      \fi
    \fi
    \ifUnderFrame\else
      \label{#5}%
    \fi
  \else
    \UnderFrametrue%
  \fi
  \begin{center}\GRAPHIC{#7}{#6}{#2}{#3}{\z@}\end{center}%
  \ifUnderFrame
    #4
    \ifx\QCBOptA\empty
      \caption{}%
    \else
      \ifx\QCBOptB\empty
        \caption{\QCBOptA}%
      \else
        \caption[\QCBOptB]{\QCBOptA}%
      \fi
    \fi
    \label{#5}%
  \fi
  \end{figure}%
 }%
\def\makeactives{
  \catcode`\"=\active
  \catcode`\;=\active
  \catcode`\:=\active
  \catcode`\'=\active
  \catcode`\~=\active
}
   \gdef\activesoff{%
      \def"{\string"}%
      \def;{\string;}%
      \def:{\string:}%
      \def'{\string'}%
      \def~{\string~}%
      %\bbl@deactivate{"}%
      %\bbl@deactivate{;}%
      %\bbl@deactivate{:}%
      %\bbl@deactivate{'}%
    }
\def\FRAME#1#2#3#4#5#6#7#8{%
 \bgroup
 \ifnum\@msidraft=\@ne
   \wasdrafttrue
 \else
   \wasdraftfalse%
 \fi
 \def\LaTeXparams{}%
 \dispkind=\z@
 \def\LaTeXparams{}%
 \doFRAMEparams{#1}%
 \ifnum\dispkind=\z@\IFRAME{#2}{#3}{#4}{#7}{#8}{#5}\else
  \ifnum\dispkind=\@ne\DFRAME{#2}{#3}{#7}{#8}{#5}\else
   \ifnum\dispkind=\tw@
    \edef\@tempa{\noexpand\FFRAME{\LaTeXparams}}%
    \@tempa{#2}{#3}{#5}{#6}{#7}{#8}%
    \fi
   \fi
  \fi
  \ifwasdraft\@msidraft=1\else\@msidraft=0\fi{}%
  \egroup
 }%
\def\TEXUX#1{"texux"}
\long\def\QQQ#1#2{%
     \long\expandafter\def\csname#1\endcsname{#2}}%
\long\def\QQA#1#2{}%
\def\QTR#1#2{{\csname#1\endcsname {#2}}}%
\def\EXPAND#1[#2]#3{}%
\def\NOEXPAND#1[#2]#3{}%
\def\LaTeXparent#1{}%
\def\ChildStyles#1{}%
\def\ChildDefaults#1{}%
\def\QTagDef#1#2#3{}%
  \providecommand{\UNICODE}[2][]{\protect\rule{.1in}{.1in}}
  \providecommand{\U}[1]{\protect\rule{.1in}{.1in}}
\def\QQfnmark#1{\footnotemark}
 \def\abstract{%
  \if@twocolumn
   \section*{Abstract (Not appropriate in this style!)}%
   \else \small 
   \begin{center}{\bf Abstract\vspace{-.5em}\vspace{\z@}}\end{center}%
   \quotation 
   \fi
  }%
   \def\registered{\relax\ifmmode{}\r@gistered
                    \else$\m@th\r@gistered$\fi}%
 \def\r@gistered{^{\ooalign
  {\hfil\raise.07ex\hbox{$\scriptstyle\rm\text{R}$}\hfil\crcr
  \mathhexbox20D}}}}{}%
\newdimen\theight
\def\newfmtname{LaTeX2e}
  \DeclareOldFontCommand{\rm}{\normalfont\rmfamily}{\mathrm}
  \DeclareOldFontCommand{\sf}{\normalfont\sffamily}{\mathsf}
  \DeclareOldFontCommand{\tt}{\normalfont\ttfamily}{\mathtt}
  \DeclareOldFontCommand{\bf}{\normalfont\bfseries}{\mathbf}
  \DeclareOldFontCommand{\it}{\normalfont\itshape}{\mathit}
  \DeclareOldFontCommand{\sl}{\normalfont\slshape}{\@nomath\sl}
  \DeclareOldFontCommand{\sc}{\normalfont\scshape}{\@nomath\sc}
\def\alpha{{\Greekmath 010B}}%
\def\beta{{\Greekmath 010C}}%
\def\gamma{{\Greekmath 010D}}%
\def\delta{{\Greekmath 010E}}%
\def\epsilon{{\Greekmath 010F}}%
\def\zeta{{\Greekmath 0110}}%
\def\eta{{\Greekmath 0111}}%
\def\theta{{\Greekmath 0112}}%
\def\iota{{\Greekmath 0113}}%
\def\kappa{{\Greekmath 0114}}%
\def\lambda{{\Greekmath 0115}}%
\def\mu{{\Greekmath 0116}}%
\def\nu{{\Greekmath 0117}}%
\def\xi{{\Greekmath 0118}}%
\def\pi{{\Greekmath 0119}}%
\def\rho{{\Greekmath 011A}}%
\def\sigma{{\Greekmath 011B}}%
\def\tau{{\Greekmath 011C}}%
\def\upsilon{{\Greekmath 011D}}%
\def\phi{{\Greekmath 011E}}%
\def\chi{{\Greekmath 011F}}%
\def\psi{{\Greekmath 0120}}%
\def\omega{{\Greekmath 0121}}%
\def\varepsilon{{\Greekmath 0122}}%
\def\vartheta{{\Greekmath 0123}}%
\def\varpi{{\Greekmath 0124}}%
\def\varrho{{\Greekmath 0125}}%
\def\varsigma{{\Greekmath 0126}}%
\def\varphi{{\Greekmath 0127}}%
\def\nabla{{\Greekmath 0272}}
\def\FindBoldGroup{%
   {\setbox0=\hbox{$\mathbf{x\global\edef\theboldgroup{\the\mathgroup}}$}}%
}
\def\Greekmath#1#2#3#4{%
    \if@compatibility
        \ifnum\mathgroup=\symbold
           \mathchoice{\mbox{\boldmath$\displaystyle\mathchar"#1#2#3#4$}}%
                      {\mbox{\boldmath$\textstyle\mathchar"#1#2#3#4$}}%
                      {\mbox{\boldmath$\scriptstyle\mathchar"#1#2#3#4$}}%
                      {\mbox{\boldmath$\scriptscriptstyle\mathchar"#1#2#3#4$}}%
        \else
           \mathchar"#1#2#3#4% 
        \fi 
    \else 
        \FindBoldGroup
        \ifnum\mathgroup=\theboldgroup % For 2e
           \mathchoice{\mbox{\boldmath$\displaystyle\mathchar"#1#2#3#4$}}%
                      {\mbox{\boldmath$\textstyle\mathchar"#1#2#3#4$}}%
                      {\mbox{\boldmath$\scriptstyle\mathchar"#1#2#3#4$}}%
                      {\mbox{\boldmath$\scriptscriptstyle\mathchar"#1#2#3#4$}}%
        \else
           \mathchar"#1#2#3#4% 
        \fi     	    
	  \fi}
\newif\ifGreekBold  \GreekBoldfalse
\let\SAVEPBF=\pbf
\def\pbf{\GreekBoldtrue\SAVEPBF}%
  \newcounter{equationnumber}  
  \def\mathletters{%
     \addtocounter{equation}{1}
     \edef\@currentlabel{\theequation}%
     \setcounter{equationnumber}{\c@equation}
     \setcounter{equation}{0}%
     \edef\theequation{\@currentlabel\noexpand\alph{equation}}%
  }
    \def\BibTeX{{\rm B\kern-.05em{\sc i\kern-.025em b}\kern-.08em
                 T\kern-.1667em\lower.7ex\hbox{E}\kern-.125emX}}}{}%
\def\AmS{{\protect\usefont{OMS}{cmsy}{m}{n}%
                A\kern-.1667em\lower.5ex\hbox{M}\kern-.125emS}}}{}%
\def\@@eqncr{\let\@tempa\relax
    \ifcase\@eqcnt \def\@tempa{& & &}\or \def\@tempa{& &}%
      \else \def\@tempa{&}\fi
     \@tempa
     \if@eqnsw
        \iftag@
           \@taggnum
        \else
           \@eqnnum\stepcounter{equation}%
        \fi
     \fi
     \global\tag@false
     \global\@eqnswtrue
     \global\@eqcnt\z@\cr}
\def\TCItag{\@ifnextchar*{\@TCItagstar}{\@TCItag}}
\def\@TCItag#1{%
    \global\tag@true
    \global\def\@taggnum{(#1)}%
    \global\def\@currentlabel{#1}}
\def\@TCItagstar*#1{%
    \global\tag@true
    \global\def\@taggnum{#1}%
    \global\def\@currentlabel{#1}}
\def\dprod{\mathop{\displaystyle \prod }}%
\def\dbigcap{\mathop{\displaystyle \bigcap }}%
\def\ExitTCILatex{\makeatother }
\if@compatibility\message{amsmath already loaded}\fi\aftergroup\ExitTCILatex}
\if@compatibility\message{amstex already loaded}\fi\aftergroup\ExitTCILatex}
\if@compatibility\message{amsgen already loaded}\fi\aftergroup\ExitTCILatex}
\let\DOTSI\relax
\def\RIfM@{\relax\ifmmode}%
\def\FN@{\futurelet\next}%
\def\iint{\DOTSI\intno@\tw@\FN@\ints@}%
\def\iiint{\DOTSI\intno@\thr@@\FN@\ints@}%
\def\iiiint{\DOTSI\intno@4 \FN@\ints@}%
\def\idotsint{\DOTSI\intno@\z@\FN@\ints@}%
\def\ints@{\findlimits@\ints@@}%
\newif\iflimtoken@
\newif\iflimits@
\def\findlimits@{\limtoken@true\ifx\next\limits\limits@true
 \else\ifx\next\nolimits\limits@false\else
 \limtoken@false\ifx\ilimits@\nolimits\limits@false\else
 \ifinner\limits@false\else\limits@true\fi\fi\fi\fi}%
\def\multint@{\int\ifnum\intno@=\z@\intdots@                          %1
 \else\intkern@\fi                                                    %2
 \ifnum\intno@>\tw@\int\intkern@\fi                                   %3
 \ifnum\intno@>\thr@@\int\intkern@\fi                                 %4
 \int}%                                                               %5
\def\multintlimits@{\intop\ifnum\intno@=\z@\intdots@\else\intkern@\fi
 \ifnum\intno@>\tw@\intop\intkern@\fi
 \ifnum\intno@>\thr@@\intop\intkern@\fi\intop}%
\def\intic@{%
    \mathchoice{\hskip.5em}{\hskip.4em}{\hskip.4em}{\hskip.4em}}%
\def\negintic@{\mathchoice
 {\hskip-.5em}{\hskip-.4em}{\hskip-.4em}{\hskip-.4em}}%
\def\ints@@{\iflimtoken@                                              %1
 \def\ints@@@{\iflimits@\negintic@
   \mathop{\intic@\multintlimits@}\limits                             %2
  \else\multint@\nolimits\fi                                          %3
  \eat@}%                                                             %4
 \else                                                                %5
 \def\ints@@@{\iflimits@\negintic@
  \mathop{\intic@\multintlimits@}\limits\else
  \multint@\nolimits\fi}\fi\ints@@@}%
\def\intkern@{\mathchoice{\!\!\!}{\!\!}{\!\!}{\!\!}}%
\def\plaincdots@{\mathinner{\cdotp\cdotp\cdotp}}%
\def\intdots@{\mathchoice{\plaincdots@}%
 {{\cdotp}\mkern1.5mu{\cdotp}\mkern1.5mu{\cdotp}}%
 {{\cdotp}\mkern1mu{\cdotp}\mkern1mu{\cdotp}}%
 {{\cdotp}\mkern1mu{\cdotp}\mkern1mu{\cdotp}}}%
\def\RIfM@{\relax\protect\ifmmode}
\def\text{\RIfM@\expandafter\text@\else\expandafter\mbox\fi}
\let\nfss@text\text
\def\text@#1{\mathchoice
   {\textdef@\displaystyle\f@size{#1}}%
   {\textdef@\textstyle\tf@size{\firstchoice@false #1}}%
   {\textdef@\textstyle\sf@size{\firstchoice@false #1}}%
   {\textdef@\textstyle \ssf@size{\firstchoice@false #1}}%
   \glb@settings}
\def\textdef@#1#2#3{\hbox{{%
                    \everymath{#1}%
                    \let\f@size#2\selectfont
                    #3}}}
\newif\iffirstchoice@
\def\Let@{\relax\iffalse{\fi\let\\=\cr\iffalse}\fi}%
\def\vspace@{\def\vspace##1{\crcr\noalign{\vskip##1\relax}}}%
\def\multilimits@{\bgroup\vspace@\Let@
 \baselineskip\fontdimen10 \scriptfont\tw@
 \advance\baselineskip\fontdimen12 \scriptfont\tw@
 \lineskip\thr@@\fontdimen8 \scriptfont\thr@@
 \lineskiplimit\lineskip
 \vbox\bgroup\ialign\bgroup\hfil$\m@th\scriptstyle{##}$\hfil\crcr}%
\def\Sb{_\multilimits@}%
\def\endSb{\crcr\egroup\egroup\egroup}%
\def\Sp{^\multilimits@}%
\newdimen\ex@
\def\rightarrowfill@#1{$#1\m@th\mathord-\mkern-6mu\cleaders
 \hbox{$#1\mkern-2mu\mathord-\mkern-2mu$}\hfill
 \mkern-6mu\mathord\rightarrow$}%
\def\leftarrowfill@#1{$#1\m@th\mathord\leftarrow\mkern-6mu\cleaders
 \hbox{$#1\mkern-2mu\mathord-\mkern-2mu$}\hfill\mkern-6mu\mathord-$}%
\def\leftrightarrowfill@#1{$#1\m@th\mathord\leftarrow
\mkern-6mu\cleaders
 \hbox{$#1\mkern-2mu\mathord-\mkern-2mu$}\hfill
 \mkern-6mu\mathord\rightarrow$}%
\def\overrightarrow{\mathpalette\overrightarrow@}%
\def\overrightarrow@#1#2{\vbox{\ialign{##\crcr\rightarrowfill@#1\crcr
 \noalign{\kern-\ex@\nointerlineskip}$\m@th\hfil#1#2\hfil$\crcr}}}%
\def\overleftarrow{\mathpalette\overleftarrow@}%
\def\overleftarrow@#1#2{\vbox{\ialign{##\crcr\leftarrowfill@#1\crcr
 \noalign{\kern-\ex@\nointerlineskip}$\m@th\hfil#1#2\hfil$\crcr}}}%
\def\overleftrightarrow{\mathpalette\overleftrightarrow@}%
\def\overleftrightarrow@#1#2{\vbox{\ialign{##\crcr
   \leftrightarrowfill@#1\crcr
 \noalign{\kern-\ex@\nointerlineskip}$\m@th\hfil#1#2\hfil$\crcr}}}%
\def\underrightarrow{\mathpalette\underrightarrow@}%
\def\underrightarrow@#1#2{\vtop{\ialign{##\crcr$\m@th\hfil#1#2\hfil
  $\crcr\noalign{\nointerlineskip}\rightarrowfill@#1\crcr}}}%
\def\underleftarrow{\mathpalette\underleftarrow@}%
\def\underleftarrow@#1#2{\vtop{\ialign{##\crcr$\m@th\hfil#1#2\hfil
  $\crcr\noalign{\nointerlineskip}\leftarrowfill@#1\crcr}}}%
\def\underleftrightarrow{\mathpalette\underleftrightarrow@}%
\def\underleftrightarrow@#1#2{\vtop{\ialign{##\crcr$\m@th
  \hfil#1#2\hfil$\crcr
 \noalign{\nointerlineskip}\leftrightarrowfill@#1\crcr}}}%
\def\qopnamewl@#1{\mathop{\operator@font#1}\nlimits@}
\let\nlimits@\displaylimits
\def\setboxz@h{\setbox\z@\hbox}
\def\varlim@#1#2{\mathop{\vtop{\ialign{##\crcr
 \hfil$#1\m@th\operator@font lim$\hfil\crcr
 \noalign{\nointerlineskip}#2#1\crcr
 \noalign{\nointerlineskip\kern-\ex@}\crcr}}}}
 \def\rightarrowfill@#1{\m@th\setboxz@h{$#1-$}\ht\z@\z@
  $#1\copy\z@\mkern-6mu\cleaders
  \hbox{$#1\mkern-2mu\box\z@\mkern-2mu$}\hfill
  \mkern-6mu\mathord\rightarrow$}
\def\leftarrowfill@#1{\m@th\setboxz@h{$#1-$}\ht\z@\z@
  $#1\mathord\leftarrow\mkern-6mu\cleaders
  \hbox{$#1\mkern-2mu\copy\z@\mkern-2mu$}\hfill
  \mkern-6mu\box\z@$}
\def\projlim{\qopnamewl@{proj\,lim}}
\def\injlim{\qopnamewl@{inj\,lim}}
\def\varinjlim{\mathpalette\varlim@\rightarrowfill@}
\def\varprojlim{\mathpalette\varlim@\leftarrowfill@}
\def\varliminf{\mathpalette\varliminf@{}}
\def\varliminf@#1{\mathop{\underline{\vrule\@depth.2\ex@\@width\z@
   \hbox{$#1\m@th\operator@font lim$}}}}
\def\varlimsup{\mathpalette\varlimsup@{}}
\def\varlimsup@#1{\mathop{\overline
  {\hbox{$#1\m@th\operator@font lim$}}}}
\def\align{\@verbatim \frenchspacing\@vobeyspaces \@alignverbatim
You are using the "align" environment in a style in which it is not defined.}
\let\csname endalign*\endcsname =\endtrivlist
\def\alignat{\@verbatim \frenchspacing\@vobeyspaces \@alignatverbatim
You are using the "alignat" environment in a style in which it is not defined.}
\let\csname endalignat*\endcsname =\endtrivlist
\def\xalignat{\@verbatim \frenchspacing\@vobeyspaces \@xalignatverbatim
You are using the "xalignat" environment in a style in which it is not defined.}
\let\csname endxalignat*\endcsname =\endtrivlist
\def\gather{\@verbatim \frenchspacing\@vobeyspaces \@gatherverbatim
You are using the "gather" environment in a style in which it is not defined.}
\let\csname endgather*\endcsname =\endtrivlist
\def\multiline{\@verbatim \frenchspacing\@vobeyspaces \@multilineverbatim
You are using the "multiline" environment in a style in which it is not defined.}
\let\csname endmultiline*\endcsname =\endtrivlist
\def\arrax{\@verbatim \frenchspacing\@vobeyspaces \@arraxverbatim
You are using a type of "array" construct that is only allowed in AmS-LaTeX.}
\def\tabulax{\@verbatim \frenchspacing\@vobeyspaces \@tabulaxverbatim
You are using a type of "tabular" construct that is only allowed in AmS-LaTeX.}
\let\csname endarrax*\endcsname =\endtrivlist
\let\csname endtabulax*\endcsname =\endtrivlist
 \def\endequation{%
     \ifmmode\ifinner % FLEQN hack
      \iftag@
        \addtocounter{equation}{-1} % undo the increment made in the begin part
        $\hfil
           \displaywidth\linewidth\@taggnum\egroup \endtrivlist
        \global\tag@false
        \global\@ignoretrue   
      \else
        $\hfil
           \displaywidth\linewidth\@eqnnum\egroup \endtrivlist
        \global\tag@false
        \global\@ignoretrue 
      \fi
     \else   
      \iftag@
        \addtocounter{equation}{-1} % undo the increment made in the begin part
        \eqno \hbox{\@taggnum}
        \global\tag@false%
        $$\global\@ignoretrue
      \else
        \eqno \hbox{\@eqnnum}% $$ BRACE MATCHING HACK
        $$\global\@ignoretrue
      \fi
     \fi\fi
 } 
 \newif\iftag@ \tag@false
 \def\TCItag{\@ifnextchar*{\@TCItagstar}{\@TCItag}}
 \def\@TCItag#1{%
     \global\tag@true
     \global\def\@taggnum{(#1)}%
     \global\def\@currentlabel{#1}}
 \def\@TCItagstar*#1{%
     \global\tag@true
     \global\def\@taggnum{#1}%
     \global\def\@currentlabel{#1}}
     \def\tag{\@ifnextchar*{\@tagstar}{\@tag}}
     \def\@tag#1{%
         \global\tag@true
         \global\def\@taggnum{(#1)}}
     \def\@tagstar*#1{%
         \global\tag@true
         \global\def\@taggnum{#1}}
\begin{document}

\begin{center}
{\Large Sensitivity Analysis with respect to a Stock Price Model
with Rough Volatility via a Bismut-Elworthy-Li Formula for Singular SDEs}

\bigskip

Emmanuel Coffie, Sindre Duedahl and Frank\ Proske

\bigskip

{\large Abstract}
\end{center}

In this paper, we show the existence of unique Malliavin differentiable
solutions to SDE`s driven by a fractional Brownian motion with Hurst
parameter $H<\frac{1}{2}$ and singular, unbounded drift vector fields, for
which we also prove a stability result. Further, using the latter results,
we propose a stock price model with rough and correlated volatility, which
also allows for capturing regime switching effects. Finally, we also derive
a Bismut-Elworthy-Li formula with respect to our stock price model for
certain classes of vector fields.

\bigskip \emph{keywords}: Bismut-Elworthy-Li formula, singular SDEs,
fractional Brownian motion, Malliavin calculus, stochastic flows, stochastic
volatility

\emph{Mathematics Subject Classification} (2010): 60H10, 49N60, 91G80.

\bigskip

\section{Introduction\protect\bigskip}

Consider the stochastic differential equation (SDE)%
\begin{equation}
dX_{t}^{x}=b(t,X_{t}^{x})dt+dB_{t}^{H},0\leq t\leq T,X_{0}^{x}=x\in \mathbb{R%
}^{d}\text{,}  \label{SDE0}
\end{equation}%
where $b:\left[ 0,T\right] \times \mathbb{R}^{d}\longrightarrow \mathbb{R}%
^{d}$ is a Borel-measurable function and $B_{t}^{H},0\leq t\leq T$ is a
fractional Brownian motion with Hurst parameter $H<\frac{1}{2}$.

It was shown in \cite{BNP} by using techniques from Malliavin calculus (see
e.g. \cite{Nualart}) that the SDE (\ref{SDE0}) admits the existence of a
unique (global) strong solution $X_{\cdot }^{x}$, when 
\begin{equation}
b\in L^{1}(\mathbb{R}^{d};L^{\infty }([0,T];\mathbb{R}^{d}))\cap L^{\infty }(%
\mathbb{R}^{d};L^{\infty }([0,T];\mathbb{R}^{d}))\text{.}  \label{bbounded}
\end{equation}%
Here, a solution to (\ref{SDE0}) is called strong, if it can be represented
as a (progressively) measurable functional of the driving noise $B_{\cdot
}^{H}$.

Related results in the direction of \cite{BNP},whose proofs, however, are
based on different methods, can be e.g. found in \cite{NO}, \cite{CG} and 
\cite{Le}.

By employing the Malliavin calculus approach developed in the fundamental
papers of \cite{F1} and \cite{F2} in the Wiener process case, the authors in 
\cite{ACHP} derive a so-called Bismut-Elworthy-Li formula (BEL-formula) for
(strong) solutions to (\ref{SDE0}), when the vector field $b$ is singular in
the sense of (\ref{bbounded}). Roughly speaking, such a formula, gives a
representation of expressions of the form%
\begin{equation*}
\frac{d}{dx}E\left[ \Phi (X_{T}^{x})\right]
\end{equation*}%
for functions $\Phi :\mathbb{R}^{d}\longrightarrow \mathbb{R}$, which
doesn`t involve the derivative of $\Phi $. In this context, we also mention
the interesting work \cite{FanRen}, where a BEL-formula is established for
(differentiable) functional drift coefficients $b$ and used to study Harnack
type of inequalities. See also the article \cite{MMNPZ} in the case of a
Wiener process.

In this paper, we aim at extending the result in \cite{ACHP} based on
Malliavin calculus to the case, when $b\in L^{1}(\mathbb{R}^{d};L^{\infty
}([0,T];\mathbb{R}^{d}))$ (see Theorem \ref{StrongSolution}). Further, we
also prove as one of our main results the existence of a Malliavin
differentiable unique solution to a SDE of the type (\ref{SDE0}) in the case
of vector fields $b$ given by a sum of a merely bounded Borel-measurable
function and a Lipschitz continuous function $b:\left[ 0,T\right] \times 
\mathbb{R}\longrightarrow \mathbb{R}$ (Theorem \ref{Ouknine}). We remark
here that the latter result, actually also provides an alternative
construction method of strong solutions in \cite{NO}, where the authors use
a comparison theorem for SDE`s. We also refer to \cite{BMPD} in the Wiener
process case.

Further, we propose a stock price model of Black-Scholes type with a rough
stochastic volatility, whose dynamics is subject to a SDE of the type (\ref%
{SDE0}) and is correlated with the driving noise of the stock price SDE.
Here, we allow for singular drift coefficients, which are either in $L^{1}(%
\mathbb{R}^{d};L^{\infty }([0,T];\mathbb{R}^{d}))$ or a sum of a bounded and
a Lipschitz continuous function. In fact, the selection of a singular or
discontinuous coefficient $b$ in our model is used to capture "regime
switching effects" with respect to the volatility dynamics , which may be
due to financial crashes, market regulations or even natural disasters.
Moreover, we derive with respect to our model, which generalizes the stock
price model in \cite{ACHP} to the case of correlated volatility and stock
price noise and which allows (compared to \cite{ACHP}) for an
Ornstein-Uhlenbeck type of volatility dynamics with regime switching
feature, a BEL-formula for both cases of vector fields $b$ (Theorem \ref%
{Bismut}, Theorem \ref{Volatility} and Theorem \ref{Volatility2}). We
comment on here that such representations are known in finance as greeks
which are sensitivity parameters for measuring e.g. the changes of fair
values of options with respect to the initial underlying stock price (i.e.
delta) or with respect to the initial volatility (i.e. vega). See \cite{F1}, 
\cite{F2} or \cite{DOP} for more information.

Finally, we prove a stability estimate for solutions to (\ref{SDE0}) with
unbounded coefficients (see Proposition \ref{Stability}), which in fact
plays a crucial role in \cite{S1} and \cite{S2} for showing \emph{%
path-by-path }uniqueness of solutions to (\ref{SDE0}) in the case of a
Wiener process and bounded vector fields and which may be also applied to
the situation in our paper. Path-by-path uniqueness, which is a much
stronger concept than pathwise uniqueness and which goes back to \cite{Davie}%
, actually entails the existence of a measurable set $\Omega ^{\ast }$ with
probability mass $1$ such that for all $\omega \in \Omega ^{\ast }$ there
exists a unique deterministic solution $X_{\cdot }^{x}(\omega )$ to (\ref%
{SDE0}) in the space of continuous functions (uniformly in the initial
condition).

\bigskip

Our paper is organized as follows: In Section 2 we prove a BEL-formula
(Theorem \ref{Bismut}) for vector fields $b\in L^{1}(\mathbb{R}%
^{d};L^{\infty }([0,T];\mathbb{R}^{d}))$ based on an existence and a
uniqueness result for strong solutions (Theorem \ref{StrongSolution}). In
Section 3 we introduce our stock price model, with respect to which we
establish BEL-formulas (Theorem \ref{Volatility}, Theorem \ref{Volatility2}%
).\ We also prove the Malliavin differentiability solution of solutions in
the case of vector fields which are a sum of a bounded and Lipschitz
continuous function (Theorem \ref{Ouknine}) and the above mentioned
stability result in this case.

\bigskip

\section{A Bismut-Elworthy-Li formula for integrable vector fields}

\bigskip In this Section we first generalize a result in \cite{BNP} on the
existence of a unique global strong solution of (\ref{SDE0}) to the case of
integrable vector fields $b$. Further, we show that such a solution is
Malliavin differentiable and Sobolev differentiable with respect to the
initial condition, if the Hurst parameter $H$ is small enough. Finally,
based on the latter result, we establish a Bismut-Elworthy-Li formula
(BEL-formula) for solutions to (\ref{SDE0}).

\bigskip

Let $B_{t}^{H},t\geq 0$ be a $d-$dimensional fractional Brownian motion with
Hurst parameter $H\in (0,1)$, that is a $d-$dimensional stochastic process
with components given by independent one-dimensional fractional Brownian
motions with Hurst parameter $H\in (0,1)$ on some complete probability space 
$(\Omega ,\mathcal{F},\mu )$, which are centered Gaussian processes with a
covariance structure $R_{H}(t,s)$ of the form%
\begin{equation*}
R_{H}(t,s)=E[B_{t}^{H}B_{s}^{H}]=\frac{1}{2}(s^{2H}+t^{2H}-\left\vert
t-s\right\vert ^{2H})
\end{equation*}%
for all $t,s\geq 0$. If $H=\frac{1}{2}$ the fractional Brownian motion is a
Wiener process. See the Appendix for more details. We also recall $B_{\cdot
}^{H}$ the representation 
\begin{equation}
B_{t}^{H}=\int_{0}^{t}K_{H}(t,s)I_{d\times d}dB_{s}  \label{RepFractional}
\end{equation}%
for a $d-$dimensional Brownian motion $B_{\cdot }$, where $I_{d\times d}\in 
\mathbb{R}^{d\times d}$ is the unit matrix and $K_{H}$ the kernel as given
in (\ref{KH}) in the Appendix.

Consider the SDE

\begin{equation}
dX_{t}^{x}=b(t,X_{t}^{x})dt+dB_{t}^{H},X_{0}^{x}=x,0\leq t\leq T.
\label{SDE}
\end{equation}

for $H<\frac{1}{2}$.

In what follows, we will also make use of the following function spaces:%
\begin{eqnarray*}
L_{\infty }^{1} &:&=L^{1}(\mathbb{R}^{d};L^{\infty }([0,T];\mathbb{R}^{d})),
\\
L_{\infty }^{\infty } &:&=L^{\infty }(\mathbb{R}^{d};L^{\infty }([0,T];%
\mathbb{R}^{d})), \\
L_{\infty ,\infty }^{1,\infty } &:&=L_{\infty }^{1}\cap L_{\infty }^{\infty
}.
\end{eqnarray*}

\bigskip

In order to prove our first result on a regular unique strong solution to (%
\ref{SDE}), we need the following auxiliary result:

\begin{lemma}
\label{Exp}\bigskip Let $H<\frac{1}{2(d+1)}$ and $b\in L^{\infty }(\left[ 0,T%
\right] ;L^{1}(\mathbb{R}^{d};\mathbb{R}^{d}))$ ($\supset L_{\infty }^{1}$).
Then there exists for all $k\geq 0$ a continuous function $L:\left[ 0,\infty
\right) \longrightarrow \left[ 0,\infty \right) $ depending on $H,T,d$ and $%
k $ such that 
\begin{eqnarray*}
&&E\left[ \exp \left( k\int_{0}^{T}\left\Vert \mathcal{K}_{H}^{-1}(%
\int_{0}^{\cdot }b(s,x+B_{s}^{H})ds)(u)\right\Vert ^{2}du\right) \right] \\
&\leq &L(\left\Vert b\right\Vert _{L^{\infty }(\left[ 0,T\right] ;L^{1}(%
\mathbb{R}^{d}))})\text{,}
\end{eqnarray*}%
where the operator $\mathcal{K}_{H}^{-1}$ is defined as in (\ref{KHminus})
in the Appendix.
\end{lemma}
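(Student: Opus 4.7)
The plan is to expand the exponential as a Taylor series and to control each $n$th moment via an explicit representation of $\mathcal{K}_{H}^{-1}$ combined with the local non-determinism of fractional Brownian motion. First, using the Riemann-Liouville representation of $\mathcal{K}_{H}^{-1}$ recalled in the Appendix (which for $H<1/2$ acts as a weighted Riemann-Liouville fractional integral), applied to the absolutely continuous path $t\mapsto \int_{0}^{t} b(s,x+B_{s}^{H})\,ds$, I would rewrite
\begin{equation*}
\mathcal{K}_{H}^{-1}\Bigl(\textstyle\int_{0}^{\cdot} b(s,x+B_{s}^{H})\,ds\Bigr)(u)=c_{H}\,u^{H-1/2}\int_{0}^{u}(u-r)^{-1/2-H}\,r^{1/2-H}\,b(r,x+B_{r}^{H})\,dr.
\end{equation*}
Squaring, and using $\langle b(r,\cdot),b(r',\cdot)\rangle\leq |b(r,\cdot)|\,|b(r',\cdot)|$, I would obtain, for every $u$,
\begin{equation*}
\|\mathcal{K}_{H}^{-1}(\cdots)(u)\|^{2}\leq C_{H}\,u^{2H-1}\int_{0}^{u}\!\!\int_{0}^{u}\Phi(u,r,r')\,|b(r,x+B_{r}^{H})|\,|b(r',x+B_{r'}^{H})|\,dr\,dr',
\end{equation*}
with deterministic kernel $\Phi(u,r,r')=(u-r)^{-1/2-H}(u-r')^{-1/2-H}r^{1/2-H}(r')^{1/2-H}$. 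It is crucial that $|b|$ (and not $|b|^{2}$) appears, since our hypothesis is only $b\in L^{\infty}([0,T];L^{1}(\mathbb{R}^{d}))$.

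Next, I would expand
\begin{equation*}
E\Bigl[\exp\bigl(k\textstyle\int_{0}^{T}\|\mathcal{K}_{H}^{-1}(\cdots)(u)\|^{2}\,du\bigr)\Bigr]=\sum_{n\geq 0}\frac{k^{n}}{n!}\,E\Bigl[\bigl(\textstyle\int_{0}^{T}\|\mathcal{K}_{H}^{-1}(\cdots)(u)\|^{2}\,du\bigr)^{n}\Bigr],
\end{equation*}
so that, after the estimate of Step one and Fubini, each summand is dominated by a deterministic multiple integral in $(u_{1},\dots,u_{n},r_{1},\dots,r_{2n})$ of $E\bigl[\prod_{i=1}^{2n}|b(r_{i},x+B_{r_{i}}^{H})|\bigr]$, weighted by an explicit product of the kernels $\Phi$ and of the factors $u_{j}^{2H-1}$.

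The core estimate is then the Gaussian density bound obtained by iterated conditioning and the local-non-determinism lower bound $\mathrm{Var}(B_{r_{(i)}}^{H}\mid\mathcal{F}_{r_{(i-1)}})\geq c(r_{(i)}-r_{(i-1)})^{2H}$,
\begin{equation*}
E\Bigl[\textstyle\prod_{i=1}^{2n}|b(r_{(i)},x+B_{r_{(i)}}^{H})|\Bigr]\leq C^{2n}\prod_{i=1}^{2n}\frac{\|b(r_{(i)},\cdot)\|_{L^{1}(\mathbb{R}^{d})}}{(r_{(i)}-r_{(i-1)})^{dH}},
\end{equation*}
where $(r_{(i)})$ are the order statistics of $(r_{i})$ and $r_{(0)}=0$. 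Inserting this bound, using $\|b(r_{(i)},\cdot)\|_{L^{1}}\leq \|b\|_{L^{\infty}([0,T];L^{1})}$, and integrating the resulting product of singular kernels over the simplex yields a product of Selberg-type beta integrals. A scaling check shows that all exponents remain strictly admissible exactly under the hypothesis $H<1/(2(d+1))$, producing an $n$th-term bound of the shape $(C(T,H,d,k)\|b\|_{L^{\infty}(L^{1})}^{2})^{n}/n!$; summing in $n$ yields the desired continuous function $L$ of $\|b\|_{L^{\infty}(L^{1})}$.

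The main obstacle is the combinatorial bookkeeping needed in the last step: one must combine the non-integrable-looking singularities $(u-r)^{-1/2-H}$ coming from $\mathcal{K}_{H}^{-1}$ with the singularities $(r_{(i)}-r_{(i-1)})^{-dH}$ from local non-determinism in such a way that every individual beta integral converges, while the $n$th moment still produces the factor $1/n!$ required for the Taylor series to sum. The threshold $H<1/(2(d+1))$ is precisely the smallness condition that forces the worst cumulative exponent appearing in this bookkeeping to stay in the admissible range.
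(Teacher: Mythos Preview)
Your strategy is essentially the same as the paper's: expand the exponential, use the fractional-integral representation of $\mathcal{K}_H^{-1}$, bound the joint expectation $E[\prod_i|b(r_{(i)},x+B_{r_{(i)}}^H)|]$ via strong local non-determinism and the $L^1$-norm of $b$, and then check that the resulting simplex integrals sum under $H<\tfrac{1}{2(d+1)}$. Two points are worth noting.

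First, the paper streamlines the ``combinatorial bookkeeping'' you flag as the main obstacle by two devices you do not use. It applies Jensen's inequality, $E[(\int_0^T f(u)^2\,du)^m]\le T^{m-1}\int_0^T E[f(u)^{2m}]\,du$, so that only a single $u$ survives; and it then exploits self-similarity of $B^H$ to rewrite $\int_0^u(\cdots)b(s,x+B_s^H)\,ds\overset{law}{=}u^{1/2-H}\int_0^1(\cdots)b(su,x+u^HB_s^H)\,ds$. After these reductions one is left with a single integral over the simplex $\Delta_{0,1}^{2m}$ carrying both the $\mathcal{K}_H^{-1}$-kernels $\gamma_{-1/2-H,1/2-H}(1,s_j)$ and the LND weight $\det(Q(s))^{-d/2}$, and the paper invokes an existing estimate (Lemma~A.5 in \cite{BLPP}) of the form
\[
(2m)!\int_{\Delta_{0,1}^{2m}}\prod_{j=1}^{2m}\gamma_{-\frac12-H,\frac12-H}(1,s_j)\,\det(Q(s))^{-d/2}\,ds\;\le\;C_{H,d}^{\,m}\,(m!)^{2H(1+d)}.
\]
Your route, keeping all $u_1,\dots,u_n$ and all $r_1,\dots,r_{2n}$ coupled, would require you to redo this estimate in a more entangled setting; it is doable, but the paper's decoupling is cleaner and lets one quote a known lemma.

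Second, your claimed $n$th-term bound ``of the shape $(C\|b\|_{L^\infty(L^1)}^2)^n/n!$'' is slightly too optimistic. What actually emerges (and what the paper obtains) is $\frac{k^m}{m!}\cdot C^m\|b\|^{2m}\cdot(m!)^{2H(1+d)}$; the simplex integral with singular weights does not give the full $1/(2m)!$ volume factor but only a fractional power $(m!)^{-1+2H(1+d)}$ after combining with the $(2m)!$ from symmetrisation. The series then converges precisely because $2H(1+d)<1$, which is the hypothesis $H<\tfrac{1}{2(d+1)}$. This does not affect the correctness of your outline, but you should expect the sub-factorial correction rather than a clean geometric series.
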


\begin{proof}
Assume without loss of generality that $b\in L^{\infty }(\left[ 0,T\right]
;L^{1}(\mathbb{R}^{d};\mathbb{R}))$. Using the definition of $\mathcal{K}%
_{H}^{-1}$, we find that 
\begin{eqnarray*}
&&\mathcal{K}_{H}^{-1}(\int_{0}^{\cdot }b(s,x+B_{s}^{H})ds)(u) \\
&=&u^{H-\frac{1}{2}}I_{0+}^{\frac{1}{2}-H}s^{\frac{1}{2}-H}b(s,x+B_{s}^{H})
\\
&=&u^{H-\frac{1}{2}}\frac{1}{\Gamma (\frac{1}{2}-H)}\int_{0}^{u}s^{\frac{1}{2%
}-H}(u-s)^{-\frac{1}{2}-H}b(s,x+B_{s}^{H})ds \\
&=&\frac{1}{\Gamma (\frac{1}{2}-H)}u^{-\frac{1}{2}-H}\int_{0}^{u}(\frac{s}{u}%
)^{\frac{1}{2}-H}(1-\frac{s}{u})^{-\frac{1}{2}-H}b(s,x+B_{s}^{H})ds \\
&&\overset{law}{=}\frac{1}{\Gamma (\frac{1}{2}-H)}u^{-\frac{1}{2}%
-H}\int_{0}^{u}(\frac{s}{u})^{\frac{1}{2}-H}(1-\frac{s}{u})^{-\frac{1}{2}%
-H}b(s,x+u^{H}B_{\frac{s}{u}}^{H})ds \\
&=&\frac{1}{\Gamma (\frac{1}{2}-H)}u^{\frac{1}{2}-H}\int_{0}^{1}s^{\frac{1}{2%
}-H}(1-s)^{-\frac{1}{2}-H}b(su,x+u^{H}B_{s}^{H})ds \\
&=&\frac{1}{\Gamma (\frac{1}{2}-H)}u^{\frac{1}{2}-H}\int_{0}^{1}\gamma _{-%
\frac{1}{2}-H,\frac{1}{2}-H}(1,s)b(su,x+u^{H}B_{s}^{H})ds,
\end{eqnarray*}%
where%
\begin{equation*}
\gamma _{\alpha ,\beta }(u,s):=s^{\beta }(u-s)^{\alpha },u>s\text{.}
\end{equation*}%
So%
\begin{eqnarray*}
&&(\mathcal{K}_{H}^{-1}(\int_{0}^{\cdot }b(s,x+B_{s}^{H})ds)(u))^{2m} \\
&&\overset{law}{=}\frac{1}{\Gamma (\frac{1}{2}-H)^{2m}}u^{2m(\frac{1}{2}%
-H)}(\int_{0}^{1}\gamma _{-\frac{1}{2}-H,\frac{1}{2}%
-H}(1,s)b(su,x+u^{H}B_{s}^{H})ds)^{2m} \\
&=&\frac{1}{\Gamma (\frac{1}{2}-H)^{2m}}u^{2m(\frac{1}{2}-H)}(2m)!\times \\
&&\times \int_{\Delta _{0,1}^{2m}}\gamma _{-\frac{1}{2}-H,\frac{1}{2}%
-H}(1,s_{1})b(s_{1}u,x+u^{H}B_{s_{1}}^{H})...\gamma _{-\frac{1}{2}-H,\frac{1%
}{2}-H}(1,s_{2m})b(s_{2m}u,x+u^{H}B_{s_{2m}}^{H})ds_{1}...ds_{2m}.
\end{eqnarray*}%
Hence,%
\begin{eqnarray*}
&&E\left[ (\int_{0}^{T}(\mathcal{K}_{H}^{-1}(\int_{0}^{\cdot
}b(s,x+B_{s}^{H})ds)(u))^{2}du)^{m}\right] \\
&\leq &T^{m-1}\int_{0}^{T}E\left[ \mathcal{K}_{H}^{-1}(\int_{0}^{\cdot
}b(s,x+B_{s}^{H})ds)(u))^{2m}\right] du \\
&=&T^{m-1}\int_{0}^{T}\frac{1}{\Gamma (\frac{1}{2}-H)^{2m}}u^{2m(\frac{1}{2}%
-H)}(2m)!\times \\
&&\times \int_{\Delta _{0,1}^{2m}}\dprod\limits_{j=1}^{2m}\gamma _{-\frac{1}{%
2}-H,\frac{1}{2}-H}(1,s_{j})E\left[ \dprod%
\limits_{j=1}^{2m}b(s_{j}u,x+u^{H}B_{s_{j}}^{H})\right] ds_{1}...ds_{2m}du.
\end{eqnarray*}%
On the other hand,%
\begin{eqnarray*}
&&E\left[ \dprod\limits_{j=1}^{2m}b(s_{j}u,x+u^{H}B_{s_{j}}^{H})\right] \\
&=&\int_{(\mathbb{R}^{d})^{2m}}\dprod%
\limits_{j=1}^{2m}b(s_{j}u,x+u^{H}y_{j})\dprod\limits_{l=1}^{d}\frac{1}{%
(2\pi )^{m}\det (Q(s))^{m}}\exp (-\frac{1}{2}(y^{(l)})^{\ast
}Q^{-1}(s)y^{(l)})dy^{(1)}...dy^{(d)},
\end{eqnarray*}%
where%
\begin{equation*}
y_{j}:=(y_{j}^{(1)},...,y_{j}^{(d)})^{\ast
},j=1,...,2m,y^{(l)}:=(y_{1}^{(l)},...,y_{2m}^{(l)}),l=1,...,d
\end{equation*}%
and%
\begin{equation*}
Q(s):=Cov\left[ B_{s_{1}}^{H,1},...,B_{s_{2m}}^{H,1}\right] ,
\end{equation*}%
where $B_{u}^{H}=(B_{u}^{H,1},...,B_{u}^{H,d})^{\ast }$. It follows from the
strong local non-determinism of the fractional Brownian motion (see e.g.
Lemma 4.1 and 4.2 in \cite{BLPP}) that%
\begin{equation*}
\det (Q(s))\geq K(H)s_{1}^{2H}(s_{2}-s_{1})^{2H}...(s_{2m}-s_{2m-1})^{2H}
\end{equation*}%
for a constant $K(H)$ not depending on $m$. Thus%
\begin{eqnarray*}
&&\left\vert E\left[ \dprod\limits_{j=1}^{2m}b(s_{j}u,x+u^{H}B_{s_{j}}^{H})%
\right] \right\vert \\
&\leq &\int_{(\mathbb{R}^{d})^{2m}}\dprod\limits_{j=1}^{2m}\left\vert
b(s_{j}u,x+u^{H}y_{j})\right\vert \dprod\limits_{l=1}^{d}\frac{1}{(2\pi
)^{m}\det (Q(s))^{\frac{1}{2}}}\exp (-\frac{1}{2}(y^{(l)})^{\ast
}Q^{-1}(s)y^{(l)})dy^{(1)}...dy^{(d)} \\
&\leq &\frac{1}{(2\pi )^{md}\det (Q(s))^{\frac{d}{2}}}\int_{(\mathbb{R}%
^{d})^{2m}}\dprod\limits_{j=1}^{2m}\left\vert
b(s_{j}u,x+u^{H}y_{j})\right\vert dy^{(1)}...dy^{(d)} \\
&=&\frac{1}{(2\pi )^{md}\det (Q(s))^{\frac{d}{2}}}u^{-2mHd}\int_{(\mathbb{R}%
^{d})^{2m}}\dprod\limits_{j=1}^{2m}\left\vert b(s_{j}u,y_{j})\right\vert
dy_{1}...dy_{2m} \\
&\leq &\frac{1}{(2\pi )^{md}\det (Q(s))^{\frac{d}{2}}}u^{-2mHd}\left(
\sup_{0\leq t\leq T}\int_{\mathbb{R}^{d}}\left\vert b(t,y)\right\vert
dy\right) ^{2m}.
\end{eqnarray*}%
So we obtain that%
\begin{eqnarray*}
&&E\left[ (\int_{0}^{T}(\mathcal{K}_{H}^{-1}(\int_{0}^{\cdot
}b(s,x+B_{s}^{H})ds)(u))^{2}du)^{m}\right] \\
&\leq &T^{m-1}\int_{0}^{T}\frac{1}{\Gamma (\frac{1}{2}-H)^{2m}}u^{2m(\frac{1%
}{2}-H)}(2m)!\times \\
&&\times \int_{\Delta _{0,1}^{2m}}\dprod\limits_{j=1}^{2m}\gamma _{-\frac{1}{%
2}-H,\frac{1}{2}-H}(1,s_{j})\frac{1}{(2\pi )^{md}\det (Q(s))^{\frac{d}{2}}}%
u^{-2mHd}\left( \sup_{0\leq t\leq T}\int_{\mathbb{R}^{d}}\left\vert
b(t,y)\right\vert dy\right) ^{2m}ds_{1}...ds_{2m}du \\
&=&T^{m-1}\left( \sup_{0\leq t\leq T}\int_{\mathbb{R}^{d}}\left\vert
b(t,y)\right\vert dy\right) ^{2m}\int_{0}^{T}\frac{1}{\Gamma (\frac{1}{2}%
-H)^{2m}}u^{2m(\frac{1}{2}-H(d+1))}(2m)!\times \\
&&\times \int_{\Delta _{0,1}^{2m}}\dprod\limits_{j=1}^{2m}\gamma _{-\frac{1}{%
2}-H,\frac{1}{2}-H}(1,s_{j})\frac{1}{(2\pi )^{md}\det (Q(s))^{\frac{d}{2}}}%
ds_{1}...ds_{2m}du.
\end{eqnarray*}%
Further, we know from Lemma A.5 in \cite{BLPP} that%
\begin{eqnarray*}
&&(2m)!\int_{\Delta _{0,1}^{2m}}\dprod\limits_{j=1}^{2m}\gamma _{-\frac{1}{2}%
-H,\frac{1}{2}-H}(1,s_{j})\frac{1}{(2\pi )^{md}\det (Q(s))^{\frac{d}{2}}}%
ds_{1}...ds_{2m} \\
&\leq &C_{H,d}^{m}(m!)^{2H(1+d)}.
\end{eqnarray*}%
Therefore, we get that%
\begin{eqnarray*}
&&E\left[ (\int_{0}^{T}(\mathcal{K}_{H}^{-1}(\int_{0}^{\cdot
}b(s,x+B_{s}^{H})ds)(u))^{2}du)^{m}\right] \\
&\leq &T^{m-1}\frac{1}{\Gamma (\frac{1}{2}-H)^{2m}}\left( \sup_{0\leq t\leq
T}\int_{\mathbb{R}^{d}}\left\vert b(t,y)\right\vert dy\right) ^{2m}TT^{2m(%
\frac{1}{2}-H(d+1))}C_{H,d}^{m}(m!)^{2H(1+d)}.
\end{eqnarray*}%
Hence,%
\begin{eqnarray*}
&&E\left[ \exp (k\int_{0}^{T}(\mathcal{K}_{H}^{-1}(\int_{0}^{\cdot
}b(s,x+B_{s}^{H})ds)(u))^{2}du\right] \\
&\leq &\sum_{m\geq 0}\frac{k^{m}}{m!}C_{H,d}^{m}(m!)^{2H(1+d)}T^{m-1}\frac{1%
}{\Gamma (\frac{1}{2}-H)^{2m}}\left( \sup_{0\leq t\leq T}\int_{\mathbb{R}%
^{d}}\left\vert b(t,y)\right\vert dy\right) ^{2m}TT^{2m(\frac{1}{2}-H(d+1))}
\\
&=&L(\left\Vert b\right\Vert _{L^{\infty }(\left[ 0,T\right] ;L^{1}(\mathbb{R%
}^{d}))})
\end{eqnarray*}%
for a continuous function $L:\left[ 0,\infty \right) \longrightarrow \left[
0,\infty \right) $ depending on $H,T,d$ and $k$.
\end{proof}

\begin{corollary}
\label{WeakSolution}Assume that $H<\frac{1}{2(d+1)}$ and $b\in L^{\infty }(%
\left[ 0,T\right] ;L^{1}(\mathbb{R}^{d};\mathbb{R}^{d}))$. Then, there
exists a weak solution to (\ref{SDE}). Further, suppose that%
\begin{equation*}
(X_{\cdot }^{(i)},B_{\cdot }^{i,H}),(\Omega ^{(i)},\mathcal{F}^{(i)},\mu
^{(i)}),\left\{ \mathcal{F}_{t}^{(i)}\right\} _{0\leq t\leq T},i=1,2
\end{equation*}%
are two weak solutions to (\ref{SDE}) ($B_{\cdot }^{i,H}$ denotes a
fractional Brownian motion with Hurst parameter $H$ with respect to $%
X_{\cdot }^{(i)}$). Require that%
\begin{equation}
\int_{0}^{T}\left\Vert \mathcal{K}_{H}^{-1}(\int_{0}^{\cdot
}b(s,X_{s}^{(i)})ds)(u)\right\Vert ^{2}du<\infty \text{ }\mu ^{(i)}-\text{%
a.e., }i=1,2\text{.}  \label{UniqueWeak}
\end{equation}%
Then both solutions are equal in law.
\end{corollary}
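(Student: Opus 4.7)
The strategy is a Girsanov transformation for fractional Brownian motion: existence is obtained by turning $x+B^H$ into a solution of (\ref{SDE}), and uniqueness in law follows by inverting this change of measure to express the law of any weak solution as a functional expectation against the law of $x+B^H$.

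\textbf{Existence.} Take a filtered probability space $(\Omega,\mathcal{F},(\mathcal{F}_t),\mu)$ carrying a $d$-dimensional Brownian motion $B$, and let $B_t^H=\int_0^tK_H(t,s)\,dB_s$ be the associated fractional Brownian motion as in (\ref{RepFractional}). Set $X_t:=x+B_t^H$ and abbreviate $U_r:=\mathcal{K}_H^{-1}(\int_0^{\cdot}b(s,X_s)\,ds)(r)$. Consider the candidate density
\[
Z_T:=\exp\left(\int_0^T U_r\cdot dB_r-\tfrac{1}{2}\int_0^T\|U_r\|^2\,dr\right).
\]
Lemma \ref{Exp} supplies a finite exponential moment of $\int_0^T\|U_r\|^2\,dr$ of every order, so Novikov's criterion implies that $Z_T$ is a true $\mu$-martingale. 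Setting $d\tilde{\mu}=Z_T\,d\mu$, the Girsanov theorem for fBm (as reviewed in the Appendix) gives that $\tilde{B}_t^H:=B_t^H-\int_0^tb(s,X_s)\,ds$ is a fractional Brownian motion with Hurst parameter $H$ under $\tilde{\mu}$; hence $(X,\tilde{B}^H)$ on $(\Omega,\tilde{\mu})$ is a weak solution of (\ref{SDE}).

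\textbf{Uniqueness in law.} Let $(X^{(i)},B^{(i),H})$ on $(\Omega^{(i)},\mu^{(i)})$ be two weak solutions satisfying (\ref{UniqueWeak}), and let $B^{(i)}$ denote the underlying Brownian motion recovered deterministically from $B^{(i),H}$ through the Molchan kernel. With $U_r^{(i)}:=\mathcal{K}_H^{-1}(\int_0^{\cdot}b(s,X_s^{(i)})\,ds)(r)$, define the reverse density
\[
\tilde{Z}_T^{(i)}:=\exp\left(-\int_0^T U_r^{(i)}\cdot dB_r^{(i)}-\tfrac{1}{2}\int_0^T\|U_r^{(i)}\|^2\,dr\right),
\]
well-defined by (\ref{UniqueWeak}). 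Once $\tilde{Z}_T^{(i)}$ is known to be a true $\mu^{(i)}$-martingale, $d\nu^{(i)}:=\tilde{Z}_T^{(i)}\,d\mu^{(i)}$ is a probability measure and Girsanov yields that $X^{(i)}-x=B^{(i),H}+\int_0^{\cdot}b(s,X_s^{(i)})\,ds$ is a fractional Brownian motion under $\nu^{(i)}$. Because $\tilde{Z}_T^{(i)}$ is a fixed measurable functional $\mathcal{G}$ of the sample path of $X^{(i)}$ alone (via the deterministic reconstruction of $B^{(i)}$ from $B^{(i),H}=X^{(i)}-x$), one obtains, for every bounded measurable $\Phi$ on path space,
\[
E_{\mu^{(i)}}[\Phi(X^{(i)})]=E_{\nu^{(i)}}\bigl[\Phi(X^{(i)})/\tilde{Z}_T^{(i)}\bigr]=E\bigl[\Phi(x+B^H)\,\mathcal{G}(x+B^H)^{-1}\bigr],
\]
which depends only on $b$ and the law of $B^H$ and is therefore independent of $i$. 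The joint law of $(X^{(i)},B^{(i),H})$ equals that of $(X^{(i)},X^{(i)}-x)$, which yields the claim.

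\textbf{Main obstacle.} The hard point is verifying that $\tilde{Z}^{(i)}$ is a genuine martingale, since (\ref{UniqueWeak}) only provides $\mu^{(i)}$-a.s.\ finiteness of the quadratic variation, not an exponential moment. I would address this by a Beneš-type localization: define $\tau_n^{(i)}:=\inf\{t\in[0,T]:\int_0^t\|U_r^{(i)}\|^2\,dr\geq n\}\wedge T$, which increases to $T$ almost surely by (\ref{UniqueWeak}). Novikov holds trivially on each $[0,\tau_n^{(i)}]$, so Girsanov applies on these intervals; under the resulting measure, $X^{(i)}-x$ is an fBm stopped at $\tau_n^{(i)}$, and Lemma \ref{Exp} applied to this fBm yields exponential bounds uniform in $n$. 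Passing $n\to\infty$ then gives $E_{\mu^{(i)}}[\tilde{Z}_T^{(i)}]=1$, after which the uniqueness argument above applies verbatim.
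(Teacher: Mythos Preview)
Your approach is essentially the paper's: existence via Lemma~\ref{Exp} plus Novikov plus Girsanov (Theorem~\ref{girsanov}), and uniqueness in law by the reverse Girsanov transformation, which is exactly the content of the reference the paper invokes (Proposition 5.3.10 in Karatzas--Shreve). Your identification of the localization step (Bene\v{s}-type) as the only real issue, and the way you resolve it using Lemma~\ref{Exp} under the changed measure, is the standard route and matches what the cited argument does.

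One small slip in the write-up: in the parenthetical ``$B^{(i),H}=X^{(i)}-x$'' this equality is false pathwise under $\mu^{(i)}$; there $B^{(i),H}=X^{(i)}-x-\int_0^{\cdot}b(s,X_s^{(i)})\,ds$. What you actually need is that, \emph{after} the change to $\nu^{(i)}$, the process $X^{(i)}-x$ is a $\nu^{(i)}$-fBm with underlying Brownian motion $\tilde W^{(i)}$, and that $1/\tilde Z_T^{(i)}$ rewrites as $\exp\bigl(\int_0^T U_r^{(i)}\,d\tilde W_r^{(i)}-\tfrac12\int_0^T\|U_r^{(i)}\|^2\,dr\bigr)$; it is in this form that the functional depends only on the $\nu^{(i)}$-law of $X^{(i)}$, which is the fixed law of $x+B^H$. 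With that correction the argument goes through exactly as you intend.
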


\begin{proof}
The proof of the existence of a weak solution to (\ref{SDE}) is a direct
consequence of Lemma \ref{Exp} in connection with Girsanov `s theorem
(Theorem \ref{girsanov}) and the Novikov condition. The uniqueness of weak
solutions to (\ref{SDE}) follows from a very similar proof of Proposition
5.3.10 in \cite{KaratzasShreve}.
\end{proof}

\bigskip

We are now coming to an existence and uniqueness result of strong solutions
to the SDE (\ref{SDE}) which extends the result in \cite{BNP} for $b\in
L_{\infty ,\infty }^{1,\infty }$ to the case of vector fields in $L_{\infty
}^{1}=L^{1}(\mathbb{R}^{d};L^{\infty }([0,T];\mathbb{R}^{d}))$:

\begin{theorem}
\label{StrongSolution}Suppose that $b\in L_{\infty }^{1}$ and $H<\frac{1}{%
2(d+2)}$. Then there exists a unique (global) strong solution $X_{\cdot
}^{x} $ of the SDE (\ref{SDE}) in the class of stochastic processes
satisfying condition (\ref{UniqueWeak}). Furthermore the solution $X_{\cdot
}^{x}$ is Malliavin differentiable in the direction of the Brownian motion $%
B_{\cdot }$ in (\ref{RepFractional}) at each time point $t\in \lbrack 0,T]$
and for all initial conditions $x\in \mathbb{R}^{d}$.$\ $Moreover, $%
X_{t}^{\cdot }$ is locally Sobolev differentiable $\mu -a.e.,$ that is more
precisely%
\begin{equation*}
X_{t}^{\cdot }\in \dbigcap\limits_{p\geq 2}L^{2}(\Omega ;W^{1,p}(U))
\end{equation*}%
for bounded and open sets $U\subset \mathbb{R}^{d}$.
\end{theorem}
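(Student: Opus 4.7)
The proof strategy is to follow the Malliavin-calculus based compactness scheme developed in \cite{BNP}, adapted to the integrable setting by using Lemma \ref{Exp} in place of the Novikov-type bound available when $b$ is also bounded. First, I would choose a sequence $b_n \in C_c^\infty([0,T]\times \mathbb{R}^d;\mathbb{R}^d)$ converging to $b$ in $L^1_\infty$ with $\sup_n \|b_n\|_{L^\infty([0,T];L^1(\mathbb{R}^d))}$ uniformly controlled by a constant multiple of $\|b\|_{L^1_\infty}$. For each $n$ the SDE driven by $b_n$ admits a smooth-in-$x$, Malliavin differentiable strong solution $X^{n,x}_\cdot$ from classical theory, and by Girsanov's theorem (Theorem \ref{girsanov}) its law is governed by a density $\mathcal{E}_n$ built from $\mathcal{K}_H^{-1}\bigl(\int_0^\cdot b_n(s,x+B^H_s)\,ds\bigr)$; by Lemma \ref{Exp} the $L^p(\mu)$-norms of $\mathcal{E}_n$ are uniformly bounded in $n$ for every $p\ge 1$.

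Second, to extract a strongly convergent subsequence I would verify a Malliavin--Sobolev compactness criterion of Da Prato--Malliavin--Nualart type for $\{X^{n,x}_t\}_n$ in $L^2(\Omega;\mathbb{R}^d)$. This reduces to a uniform-in-$n$ bound on the Malliavin derivative $D_\cdot X^{n,x}_t$ in $L^2(\Omega;L^2([0,T]))$. Direct differentiation of the solution map is obstructed because $\nabla b_n$ cannot be controlled pointwise as $n\to\infty$; instead one expands the solution via Picard iteration and shifts each occurrence of $\nabla b_n$ off the drift and onto the Girsanov density through the Malliavin integration-by-parts formula. After repeated use of local non-determinism of $B^H$ (as in the proof of Lemma \ref{Exp}), the resulting terms are dominated by moments of $\exp\!\bigl(k\int_0^T\|\mathcal{K}_H^{-1}(\int_0^\cdot b_n(s,x+B^H_s)ds)(u)\|^2\,du\bigr)$, which are uniformly bounded by Lemma \ref{Exp}.

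Third, along a subsequence $X^{n_k,x}_\cdot$ converges weakly in $\mathbb{D}^{1,2}$ to a limit $Y^x_\cdot$. Girsanov's theorem identifies its law with the unique (in law) weak solution supplied by Corollary \ref{WeakSolution}, and measurability of each $X^{n_k,x}_t$ with respect to the completion of $\mathcal{F}^{B^H}_t$ is preserved in the $L^2$-limit, so $Y^x_\cdot$ is a strong solution; uniqueness within the class satisfying (\ref{UniqueWeak}) then follows by combining weak uniqueness from Corollary \ref{WeakSolution} with the strong-solution property. Malliavin differentiability of $Y^x_t$ is inherited from the uniform $\mathbb{D}^{1,2}$-bound. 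For local Sobolev differentiability in the initial condition I would argue analogously: establish a uniform bound on $\nabla_x X^{n,x}_t$ in $L^2(\Omega;L^p(U;\mathbb{R}^d))$ for every $p\ge 2$ and every bounded open $U$, by transferring the spatial gradient onto $\mathcal{E}_n$ via integration by parts and invoking Lemma \ref{Exp}, then pass to the limit.

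I expect the derivative estimates in the second step to be the main obstacle. Since $\nabla b_n$ diverges as the mollification tightens, every bound must avoid evaluating $\nabla b_n$ pointwise along the trajectory, and the integration-by-parts manoeuvre generates one extra power of the singular kernel $K_H$ relative to the computation underlying Lemma \ref{Exp}. It is precisely this additional factor that forces the threshold for admissible Hurst parameters to tighten from $H<\tfrac{1}{2(d+1)}$ in Lemma \ref{Exp} to $H<\tfrac{1}{2(d+2)}$ in the theorem; carrying out the combinatorial bookkeeping on the Picard expansion carefully enough to recover this sharp exponent is the most delicate part of the argument.
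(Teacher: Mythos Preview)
Your outline follows essentially the same route as the paper: approximate $b$ by smooth $b_n$ in $L_\infty^1$, apply the Da Prato--Malliavin--Nualart compactness criterion to $\{X_t^{x,n}\}_n$, eliminate the derivatives $b_n'$ in the Picard expansion of $D_\cdot X_t^{x,n}$ by Girsanov plus an integration-by-parts device (what the paper calls the ``local time variational calculus argument'', formula (\ref{ibp0}) together with Theorem \ref{mainestimate}), and control everything via Lemma \ref{Exp}; the Sobolev regularity in $x$ is handled the same way.

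Two points of imprecision are worth flagging. First, the DMN criterion does \emph{not} reduce to a uniform $L^2([0,T]\times\Omega)$-bound on $D_\cdot X_t^{x,n}$ alone; that only gives weak relative compactness in $\mathbb{D}^{1,2}$, which is too weak to pass to the limit. The paper also needs the fractional Sobolev bound (\ref{D2}) on $s\mapsto D_s X_t^{x,n}$, and it is this second estimate that yields \emph{strong} $L^2(\Omega)$-relative compactness. Second, your identification step is too quick: knowing that the limit $Y_t^x$ is $\mathcal{F}_t^{B^H}$-measurable and has the correct one-dimensional laws does not by itself show it solves the SDE. The paper instead proves (via a Wiener-transform argument, cf.\ Lemma 5.5 in \cite{BLPP}) that any $L^2$-limit must equal $E[X_t^x\mid\mathcal{F}_t]$ for the Girsanov weak solution $X^x$, which in particular forces the full sequence to converge, and then shows separately that $X_t^x$ is already $\mathcal{F}_t$-adapted.
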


\begin{remark}
\bigskip Compare also the results in \cite{CG} and \cite{NO}, which cannot
be applied to the case, when $b\in L_{\infty }^{1}$ for $d>1$.
\end{remark}

\begin{remark}
We also mention the interesting result in \cite{Le}, which contains the
existence of a unique strong solution in Theorem \ref{StrongSolution} as a
special case. However, the method for proving this result, which is based on
a stochastic sewing lemma and the Yamada-Watanabe approach, doesn%
%TCIMACRO{\U{b4}}%
%BeginExpansion
\'{}%
%EndExpansion
t yield- as in our case- regularity of solutions in the sense of Malliavin
and Sobolev differentiability.
\end{remark}

\begin{remark}
\label{RemarkStrongSolution}Using the same line of reasoning as in the proof
of Theorem \ref{StrongSolution} (see below), we remark that the above result
also holds, if the driving noise $B_{\cdot }^{H}$ is replaced by $\rho
_{1}B_{\cdot }^{H}+\rho _{2}W_{\cdot }$, where $W_{\cdot }$ is a Wiener
process independent of $B_{\cdot }^{H}$ and $\rho _{1},\rho _{2}\in \mathbb{%
R\smallsetminus }\left\{ 0\right\} $.
\end{remark}

\begin{proof}[Proof of Theorem \protect\ref{StrongSolution}]
The proof of this result is very similar to that of \cite{BNP} (see also 
\cite{BLPP}). Therefore, we give here a sketch of the proof, where we
indicate which modifications in the proof are needed.

The idea for the proof of the existence of a unique Malliavin differentiable
solution $X_{\cdot }^{x}$ of the SDE (\ref{SDE}) relies on a compactness
criterion for square integrable functionals of Wiener processes in \cite{DMN}%
. The proof consists of the following steps:

1.step: We consider a sequence of compactly supported smooth functions $%
b_{n}:\left[ 0,T\right] \times \mathbb{R}^{d}\longrightarrow \mathbb{R}%
^{d},n\geq 1$ such that 
\begin{equation*}
b_{n}\underset{n\longrightarrow \infty }{\longrightarrow }b\text{ in }%
L_{\infty }^{1}\text{.}
\end{equation*}%
The objective here is to show that for each $0\leq t\leq T$ the sequence $%
X_{t}^{x,n}$ associated with the SDE%
\begin{equation}
X_{u}^{x,n}=x+\int_{0}^{u}b_{n}(s,X_{s}^{x,n})du+B_{u}^{H},0\leq u\leq T
\label{SDEApprox}
\end{equation}%
for $H<\frac{1}{2(d+2)}$ is relatively compact in $L^{2}(\mu ;\mathbb{R}%
^{d}) $ by employing the compactness criterion for $L^{2}(\mu ;\mathbb{R}%
^{d})$ ($\mu $ Wiener measure) in \cite{DMN}. In fact, the following
estimates in \cite{BNP} give a sufficient criterion for the relative
compactness $X_{t}^{x,n},n\geq 1$:%
\begin{equation}
\left\Vert D_{\cdot }^{B}X_{t}^{x,n}\right\Vert _{L^{2}([0,t]\times \Omega
)}^{2}\leq C_{1}(\left\Vert b_{n}\right\Vert _{L_{\infty ,\infty }^{1,\infty
}})  \label{D1}
\end{equation}%
and%
\begin{equation}
\int_{0}^{t}\int_{0}^{t}\frac{\left\Vert D_{s}^{B}X_{t}^{x,n}-D_{s^{\prime
}}^{B}X_{t}^{x,n}\right\Vert _{L^{2}(\mu )}^{2}}{\left\vert s^{\prime
}-s\right\vert ^{1+2\beta }}dsds^{\prime }\leq C_{2}(\left\Vert
b_{n}\right\Vert _{L_{\infty ,\infty }^{1,\infty }})\text{,}  \label{D2}
\end{equation}%
where $D^{B}$ denotes the Malliavin derivative in the direction of the
Brownian motion $B_{\cdot }$ and where $C_{1},C_{2}:\left[ 0,\infty \right)
\longrightarrow \left[ 0,\infty \right) $ are continuous functions depending
on $H,T$ and $d$. By using Girsanov`s theorem in connection with Lemma \ref%
{Exp} (see for more detailed explanations the next steps) we obtain under
the assumptions of Theorem \ref{StrongSolution} the same estimates as in (%
\ref{D1}), (\ref{D2}) with the only difference that $\left\Vert
b_{n}\right\Vert _{L_{\infty ,\infty }^{1,\infty }}$ in the functions $%
C_{1},C_{2}$ is replaced by $\left\Vert b_{n}\right\Vert _{L_{\infty }^{1}}$.

2.step: By applying the Malliavin derivative $D^{B}$ to both sides of (\ref%
{SDEApprox}) and the chain rule for the Malliavin derivative (see \cite%
{Nualart} or \cite{DOP}), we find that%
\begin{equation}
D_{\theta }^{B}X_{u}^{x,n}=x+\int_{\theta }^{u}b_{n}^{\shortmid
}(s,X_{s}^{x,n})D_{\theta }^{B}X_{s}^{x,n}du+K_{H}(u,\theta )I_{d\times
d},0\leq \theta <u\leq T,n\geq 1,a.e.,
\end{equation}%
where $b_{n}^{\shortmid }$ is the spatial Fr\'{e}chet derivative of $b_{n}$.
Then, Picard iteration gives%
\begin{eqnarray}
D_{\theta }^{B}X_{u}^{x,n} &=&K_{H}(u,\theta )I_{d\times d}+  \notag \\
&&\sum_{m\geq 1}\int_{\theta <s_{1}<...<s_{m}<u}b_{n}^{\shortmid
}(s_{m},X_{s_{m}}^{x,n})...b_{n}^{\shortmid
}(s_{1},X_{s_{1}}^{x,n})K_{H}(s_{1},\theta )ds_{1}...ds_{m},  \label{Picard}
\end{eqnarray}%
where the convergence is in $L^{p}$-sense. Then, in order to "eliminate" the
derivatives $b_{n}^{\shortmid }$ in (\ref{Picard}) we can apply Girsanov`s
change of measure (see Theorem \ref{girsanov} in the Appendix) combined with
Lemma \ref{Exp} and the following "local time variational calculus argument":%
\begin{equation}
\int_{\Delta _{\theta ,t}^{m}}D^{\alpha }f(s,B_{s}^{H})ds=\int_{\left( 
\mathbb{R}^{d}\right) ^{m}}\Lambda _{\alpha }^{f}(\theta ,t,z)dz
\label{ibp0}
\end{equation}%
for a random field $\Lambda _{\alpha }^{f}$ which (in the case of
time-homogeneous vector fields) can be interpreted as a (scaled) local time
on the $m$-dimensional simplex%
\begin{equation}
\Delta _{\theta ,t}^{m}:=\left\{ (s_{m},...,s_{1})\in \left[ 0,T\right]
^{m}:\theta <s_{m}<...<s_{1}<t\right\} .  \label{Simplex}
\end{equation}%
See (\ref{ibp}) in the Appendix for the precise definitions of the notation
involved. Here we apply (\ref{ibp0}) to the case, when%
\begin{equation*}
D^{\alpha }f(s,z)=\dprod\limits_{j=1}^{m}D^{\alpha
_{j}}f_{j}(s_{j},z_{j})=b_{n}^{\shortmid }(s_{m},x+z_{m})...b_{n}^{\shortmid
}(s_{1},x+z_{1})K_{H}(s_{1},\theta )\text{.}
\end{equation*}%
Finally, certain estimates with respect to $\Lambda _{\alpha }^{f}$ (see
Theorem \ref{mainestimate} in the Appendix) yield the bounds (\ref{D1}), (%
\ref{D2}) (with $\left\Vert b_{n}\right\Vert _{L_{\infty ,\infty }^{1,\infty
}}$ in the functions $C_{1},C_{2}$ is replaced by $\left\Vert
b_{n}\right\Vert _{L_{\infty }^{1}}$).

3. step: The bounds of the type (\ref{D1}), (\ref{D2}) in step 2 enable us
to apply the compactness criterion in \cite{DMN} and we obtain that%
\begin{equation*}
X_{t}^{x,n_{l}(t)}\underset{l\longrightarrow \infty }{\longrightarrow }Y_{t}%
\text{ in }L^{2}(\mu )
\end{equation*}%
for a subsequence $n_{l}(t),l\geq 1$. However, by using a very similar proof
of Lemma 5.5 in \cite{BLPP} in connection with Lemma \ref{Exp} (or see \cite%
{BNP}) it turns out that%
\begin{equation*}
Y_{t}=E\left[ X_{t}^{x}\right. \left\vert \mathcal{F}_{t}\right] \text{,}
\end{equation*}%
where $X_{\cdot }^{x}$ is the weak solution to (\ref{SDE}) of Corollary \ref%
{WeakSolution} and where $\mathcal{F}_{t},0\leq t\leq T$ is the (augmented)
filtration generated by $B_{\cdot }^{H}$. Thus%
\begin{equation}
X_{t}^{x,n}\underset{n\longrightarrow \infty }{\longrightarrow }Y_{t}\text{
in }L^{2}(\mu )  \label{ConvergenceX}
\end{equation}%
Using the latter, one shows for all bounded and continuous functions $%
\varphi $ that%
\begin{equation*}
\varphi (E\left[ X_{t}^{x}\right. \left\vert \mathcal{F}_{t}\right] )=E\left[
\varphi (X_{t}^{x})\right. \left\vert \mathcal{F}_{t}\right] \text{ }a.e.,
\end{equation*}%
which implies the $\mathcal{F}_{t}$-adaptedness of $X_{t}^{x}$. Hence, the
weak solution in Corollary \ref{WeakSolution} is a strong solution. Strong
uniqueness of solutions of (\ref{SDE}) is also a consequence of Corollary %
\ref{WeakSolution}. Further, Malliavin differentiability of the solution
directly follows from the compactness criterion in \cite{DMN}.

As for the assertion of the Sobolev regularity of the solution with respect
to the initial condition, one can invoke the "local time variational
argument" combined with Girsanov`s theorem (Theorem \ref{girsanov}) and
Lemma \ref{Exp} in step 2 in the same way and derive the following estimate:
For $p\geq 2$ and $H<\frac{1}{2(d+2)},$ we have%
\begin{equation}
\sup_{x\in \mathbb{R}^{d}}E[\left\Vert \frac{\partial }{\partial x}%
X_{t}^{x,n}\right\Vert ^{p}]\leq C_{p,H,d,T}(\left\Vert b_{n}\right\Vert
_{L_{\infty }^{\infty }},\left\Vert b_{n}\right\Vert _{L_{\infty
}^{1}})<\infty ,n\geq 1  \label{BoundedDerivatives}
\end{equation}%
for some continuous function $C_{p,H,d,T}:[0,\infty )^{2}\longrightarrow
\lbrack 0,\infty )$. However, because of Lemma \ref{Exp} we obtain under
assumptions of Theorem \ref{StrongSolution} the bound%
\begin{equation}
\sup_{x\in \mathbb{R}^{d}}E[\left\Vert \frac{\partial }{\partial x}%
X_{t}^{x,n}\right\Vert ^{p}]\leq L_{p,H,d,T}(\left\Vert b_{n}\right\Vert
_{L_{\infty }^{1}})<\infty ,n\geq 1  \label{DerivativeEstimate}
\end{equation}%
for some continuous function $L_{p,H,d,T}:[0,\infty )\longrightarrow \lbrack
0,\infty )$, which yields the spatial regularity of the solution.
\end{proof}

\bigskip

The next result is a generalization of that in \cite{ACHP} to the case of
unbounded vector fields in $L_{\infty }^{1}=L^{1}(\mathbb{R}^{d};L^{\infty
}([0,T];\mathbb{R}^{d}))$:

\begin{theorem}[Bismut-Elworthy-Li formula for $b\in L_{\infty }^{1}$]
\label{Bismut}Assume that $b\in L_{\infty }^{1}$ and $H<\frac{1}{2(d+2)}$.
Let $X_{\cdot }^{x}$ be the unique strong solution to the SDE%
\begin{equation*}
dX_{t}^{x}=b(t,X_{t}^{x})dt+dB_{t}^{H},X_{0}^{x}=x,0\leq t\leq T.
\end{equation*}%
Suppose that $U$ is a bounded and open subset of $\mathbb{R}^{d}$ and $\Phi :%
\mathbb{R}^{d}\longrightarrow \mathbb{R}$ a Borel measurable function with%
\begin{equation*}
\Phi (X_{T}^{\cdot })\in L^{2}(\Omega \times U,\mu \times dx).
\end{equation*}%
Further, let $a:[0,T]\longrightarrow \mathbb{R}$ be a bounded Borel
measurable function such that\ 
\begin{equation*}
\int_{0}^{T}a(s)ds=1.
\end{equation*}%
Then, we have the following representation%
\begin{eqnarray}
&&\frac{\partial }{\partial x}E[\Phi (X_{T}^{x})]  \notag \\
&=&C_{H}E[\Phi (X_{T}^{x})\int_{0}^{T}u^{-H-\frac{1}{2}%
}\int_{u}^{T}a(s-u)(s-u)^{\frac{1}{2}-H}s^{H-\frac{1}{2}}\left( \frac{%
\partial }{\partial x}X_{s-u}^{x}\right) ^{\ast }dB_{s}du]^{\ast }
\label{BEL}
\end{eqnarray}%
for all $x\in U$ a.e., $0<t\leq T$, where $\ast $ is the transpose of a
matrix and where $C_{H}=1/(c_{H}\Gamma (\frac{1}{2}+H)\Gamma (\frac{1}{2}%
-H)) $ for%
\begin{equation*}
c_{H}=(\frac{2H}{(1-2H)B(1-2H,H+1/2)})^{1/2}.
\end{equation*}%
Here $\Gamma $ and $B$ are the Gamma and Beta function, respectively.
\end{theorem}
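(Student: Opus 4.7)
The plan is to extend the Malliavin integration-by-parts approach of \cite{ACHP} from the bounded to the $L_{\infty}^{1}$ case, exploiting the fact that the target formula (\ref{BEL}) avoids the derivative of $\Phi$ and is therefore robust under approximation. I would first smooth out the drift by choosing compactly supported smooth $b_{n}\longrightarrow b$ in $L_{\infty}^{1}$. By Theorem \ref{StrongSolution}, the corresponding strong solutions $X_{\cdot}^{x,n}$ converge to $X_{\cdot}^{x}$ in $L^{2}(\mu)$ and the Jacobians $\Psi_{t}^{n}=\frac{\partial}{\partial x}X_{t}^{x,n}$ satisfy the uniform $L^{p}$-bound (\ref{DerivativeEstimate}). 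For non-smooth $\Phi$ I would further approximate $\Phi$ by smooth $\Phi_{m}$ in $L^{2}(U\times \Omega)$; both passages to the limit are then available provided the BEL identity has been proven for smooth data.

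For smooth $b_{n}$ the formula would be obtained from the classical Malliavin IBP with respect to the underlying Brownian motion $B$ in the representation (\ref{RepFractional}). The central ingredient is the identity
\begin{equation*}
\int_{0}^{T} D_{s}^{B} X_{T}^{x,n}\, h_{n}(s)\, ds \;=\; \Psi_{T}^{n},
\end{equation*}
where $h_{n}(s)=\mathcal{K}_{H}^{-1}\!\bigl[\int_{0}^{\cdot}a(u)\Psi_{u}^{n}du\bigr](s)$ and $\int_{0}^{T}a(u)du=1$. To prove this, differentiate the equation for $D_{s}^{B}X_{T}^{x,n}$ in $T$ and apply variation of parameters to get $D_{s}^{B}X_{T}^{x,n}=K_{H}(T,s)I+\Psi_{T}^{n}\int_{s}^{T}(\Psi_{r}^{n})^{-1}b_{n}^{\prime}(r,X_{r}^{x,n})K_{H}(r,s)\,dr$; then Fubini yields $\int_{0}^{T}D_{s}^{B}X_{T}^{x,n}h_{n}(s)ds=g_{n}(T)+\Psi_{T}^{n}\int_{0}^{T}(\Psi_{r}^{n})^{-1}b_{n}^{\prime}(r,X_{r}^{x,n})g_{n}(r)dr$ with $g_{n}(r)=\int_{0}^{r}K_{H}(r,s)h_{n}(s)ds=\int_{0}^{r}a(u)\Psi_{u}^{n}du$, and an integration by parts using $(\Psi_{r}^{n})^{-1\prime}=-(\Psi_{r}^{n})^{-1}b_{n}^{\prime}(r,X_{r}^{x,n})$ collapses the right-hand side exactly to $\Psi_{T}^{n}$. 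Malliavin IBP then gives $E[\Phi^{\prime}(X_{T}^{x,n})\Psi_{T}^{n}]=E[\Phi(X_{T}^{x,n})\int_{0}^{T}h_{n}(s)^{\ast}dB_{s}]$, which is an Itô integral because $h_{n}$ is adapted. Inserting the explicit expression for $\mathcal{K}_{H}^{-1}$ and swapping the inner $dv$-integration with $dB_{s}$ by stochastic Fubini (via the substitution $v=s-u$) produces precisely the double integral on the right-hand side of (\ref{BEL}), with the constants $c_{H}$, $\Gamma(\tfrac{1}{2}-H)$ and $\Gamma(\tfrac{1}{2}+H)$ absorbed into $C_{H}$.

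To finish, I would pass to the limit $n\to\infty$. The $L^{2}$-convergence of $X_{T}^{x,n}$ plus continuity of the composition with $\Phi$ (after its own smoothing) handles the factor $\Phi(X_{T}^{x,n})$. For the stochastic integral, Itô isometry reduces the $L^{2}$-norm to a deterministic integral in $s$ of the squared inner $du$-integrand, which is controlled by the $L^{p}$-bounds (\ref{DerivativeEstimate}) on $\Psi^{n}$ combined with beta-function estimates $\int_{0}^{s}u^{-H-1/2}(s-u)^{1/2-H}du<\infty$ valid precisely because $H<\tfrac{1}{2}$; hence uniform integrability holds and Sobolev convergence of $\Psi^{n}\to\Psi$ (from Theorem \ref{StrongSolution}) is enough to pass to the limit. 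The main technical obstacle is justifying this limit passage simultaneously through the singular kernel factors $u^{-H-1/2}$, $(s-u)^{1/2-H}$, $s^{H-1/2}$ and through the random Jacobians $\Psi_{s-u}^{n}$, including the fact that the key identity $\int_{0}^{T}D_{s}^{B}X_{T}^{x}h(s)ds=\Psi_{T}$ — derived at the smooth level — extends to the $L_{\infty}^{1}$ drift via the Malliavin and Sobolev regularity already delivered by Theorem \ref{StrongSolution}; this uniform control is exactly what distinguishes the present generalisation from the bounded case treated in \cite{ACHP}.
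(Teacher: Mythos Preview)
Your overall strategy---approximate $b$ by smooth $b_{n}$, establish the identity for smooth data, pass to the limit---matches the paper's, but the two diverge in execution.

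At the smooth level you and the paper take genuinely different routes. The paper works with the Malliavin derivative $D^{H}$ in the direction of the fractional Brownian motion itself: the flow identity $D_{s}^{H}X_{T}^{x,n}=\frac{\partial}{\partial x}X_{T}^{s,X_{s}^{x,n},n}$ gives $\frac{\partial}{\partial x}E[\Phi(X_{T}^{x,n})]=E[\Phi^{\shortmid}(X_{T}^{x,n})D_{s}^{H}X_{T}^{x,n}\frac{\partial}{\partial x}X_{s}^{x,n}]$, the chain rule turns this into $E[\int_{0}^{T}a(s)D_{s}^{H}\Phi(X_{T}^{x,n})\frac{\partial}{\partial x}X_{s}^{x,n}\,ds]$, and then the transfer principle (Proposition 5.2.1 in \cite{Nualart}) expresses $D_{s}^{H}\Phi$ as an integral of $D_{u}^{B}\Phi$, after which duality with respect to $B$ produces the It\^{o} integral. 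Your variation-of-parameters identity $\int_{0}^{T}D_{s}^{B}X_{T}^{x,n}h_{n}(s)\,ds=\Psi_{T}^{n}$ with $h_{n}=\mathcal{K}_{H}^{-1}[\int_{0}^{\cdot}a\,\Psi^{n}]$ is a more direct algebraic shortcut that bypasses $D^{H}$ altogether; both routes land on the same explicit integrand after the substitution $v=s-u$ and stochastic Fubini.

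The limit $n\to\infty$ is where your sketch has a real gap. You invoke ``Sobolev convergence of $\Psi^{n}\to\Psi$ from Theorem \ref{StrongSolution}'', but that theorem delivers only uniform $L^{p}$-bounds on $\Psi^{n}$; the convergence of the Jacobians is merely \emph{weak} in $L^{2}([0,T]\times\Omega\times U)$ (Lemma \ref{WeakConvergence}), not strong, and not for fixed $x$. The paper therefore (i) tests against $\varphi\in C_{c}^{\infty}(U)$ throughout, so the $x$-integration is part of the duality pairing, and (ii) for the residual term $I_{3}(n)$ containing $\Psi^{n}-\Psi$ inside the stochastic integral, applies the Clark--Ocone formula to rewrite $E[\Phi(X_{T}^{x})\int\ldots(\Psi^{n}-\Psi)^{\ast}dB_{s}]$ as the ordinary integral $E[\int\ldots\Phi^{\shortmid}(X_{T}^{x})D_{s}X_{T}^{x}(\Psi^{n}-\Psi)^{\ast}\,ds]$, against which weak $L^{2}$-convergence over $[0,T]\times\Omega\times U$ suffices. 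Your ``It\^{o} isometry plus uniform integrability'' reasoning would need strong $L^{2}$-convergence of the integrand, which is not available; the weak-in-$x$ framing together with the Clark--Ocone step is the missing ingredient.
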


\begin{proof}
The proof is very similar to that in \cite{ACHP}. However, since we will
need parts of the proof in\ Section 3, it can be found in the Appendix.
\end{proof}

\begin{remark}
We mention that $\frac{\partial }{\partial x}X_{t}^{x},0\leq t\leq T$ in the
BEL-formula is a process $Y:[0,T]\times \Omega \times U\longrightarrow 
\mathbb{R}^{d\times d}$ in $L^{2}([0,T]\times \Omega \times U,\mathcal{P}%
\otimes \mathcal{B}(U);\mathbb{R}^{d\times d})$ such that $Y_{t}^{\cdot
}(\omega )$ coincides with the Sobolev derivative of $X_{t}^{\cdot }(\omega
) $ $(t,\omega )-$a.e. Here, $\mathcal{P}$ denotes the predictable $\sigma -$%
algebra with respect to the $\mu -$augmented filtration $\{\mathcal{F}%
_{t}\}_{0\leq t\leq T}$ generated by $B_{\cdot }^{H}$.
\end{remark}

\begin{remark}
\bigskip From a financial mathematics point of view the expression on the
right hand side of (\ref{BEL}) has the interpretation of the greek delta,
that is a sensitivity measure, which measures changes of the fair value of a
financial claim with payoff function $\Phi $ and underlying $d$ stock price
processes $X_{\cdot }^{x}$ with respect to the initial prices $x\in \mathbb{R%
}^{d}$ of the stocks. However, since $\frac{\partial }{\partial x}X_{t}^{x}$
is a Sobolev derivative, this sensitivity measure is only defined for $x\in U
$ a.e., which makes it rather unusable in financial applications. In order
to overcome this problem, one may choose as in \cite{ACHP} a continuous
version of $\frac{\partial }{\partial x}X_{t}^{x}$ by using the following
estimate, which can derived (by means of Girsanov`s theorem and Lemma \ref%
{Exp}) in the same way as in Proposition 10, \cite{ACHP} in the case of $%
b\in L_{\infty ,\infty }^{1,\infty }$: Let $b\in C_{c}^{\infty }((0,T)\times 
\mathbb{R}^{d})$ and $p>2$. Then, if $H<\frac{1}{2(d+3)},$ we have 
\begin{equation}
\sup_{x\in \mathbb{R}^{d}}E[\left\Vert \frac{\partial ^{2}}{\partial x^{2}}%
X_{t}^{s,x}\right\Vert ^{p}]\leq C_{p,H,d,T}(\left\Vert b\right\Vert
_{L_{\infty }^{1}})<\infty   \label{SecondD}
\end{equation}%
for some continuous function $C_{p,H,d,T}:[0,\infty )\longrightarrow \lbrack
0,\infty )$.
\end{remark}

\section{\protect\bigskip A regularity result for SDE `s with
non-integrable, unbounded vector fields and a stock price model with regime
switching correlated rough volatility}

As one of the main results (Theorem \ref{Ouknine}) in this Section we prove
the Malliavin differentiability to (\ref{SDE}) for $d=1$, when the vector
field $b$ is given by a sum of a bounded and Lipschitz continuous function.
This result, whose proof is based on the compactness in \cite{DMN} and a
transfer principle beween a Wiener process and a fractional Brownian motion,
provides a generalization of Theorem 3.1 in \cite{BMPD} in the case of
fractional noise for $H<\frac{1}{2}$. Further, we introduce an extension of
a stock price model with regime switching rough volatility in \cite{ACHP},
which allows for correlation between the driving noise of the stock price
SDE and that of the stochastic volatility. Then, we derive (as in \cite{ACHP}%
) a BEL-formula with respect to the this model. Here, we first study the
case, when the drift $b$ of the SDE for the volatility process belongs to $%
L_{\infty }^{1}$. Then, we examine the case, when $b$ can be decomposed as a
sum of a bounded and Lipschitz continuous function. Finally, we conclude the
paper with the proof of a stability result for solutions to (\ref{SDE}) in
the setting of Theorem \ref{Ouknine}.

\bigskip

Let us now consider the following model for stock prices $%
S_{t}^{x_{1},x_{2}},0\leq t\leq T$ with stochastic volatility $\sigma
_{t}^{x_{2}},0\leq t\leq T$ given by the solution to the SDE%
\begin{eqnarray}
S_{t}^{x_{1},x_{2}} &=&x_{1}+\int_{0}^{t}\mu
S_{u}^{x_{1},x_{2}}du+\int_{0}^{t}g(\sigma
_{u}^{x_{2}})S_{u}^{x_{1},x_{2}}dW_{u}  \notag \\
\sigma _{t}^{x_{2}} &=&x_{2}+\int_{0}^{t}b(u,\sigma _{u}^{x_{2}})du+\sqrt{%
1-\rho ^{2}}B_{t}^{H}+\rho W_{t},x_{1},x_{2}\in \mathbb{R},0\leq t\leq T,
\label{Stock}
\end{eqnarray}%
where $W_{\cdot \text{ }}$ is a Wiener process, which is independent of a
fractional Brownian motion $B_{\cdot }^{H}$ with Hurst parameter $H<\frac{1}{%
2(d+2)}=\frac{1}{6}$ for $d=1$. Here $\mu \in \mathbb{R}$, $b\in L_{\infty
}^{1}$ and $g:\mathbb{R}\longrightarrow (\alpha ,\infty )$ is a function
from $C_{b}^{2}(\mathbb{R})$ for some $\alpha >0$. Further, $\rho \in (-1,1)$
describes the correlation between $B_{\cdot }^{H}$ and $W_{\cdot \text{ }}$.

We mention that the stock price model (\ref{Stock}) covers that in \cite%
{ACHP} as a special case, when $\rho =0$.

In order to establish a BEL-formula for the stock price model (\ref{Stock})
in the case of vector fields $b\in L_{\infty }^{1}$, we shall follow here
cosely the arguments and exposition in \cite{ACHP}:

In the sequel, let $\Omega =\Omega _{1}\times \Omega _{2}$ for sample spaces 
$\Omega _{1}$, $\Omega _{2}$, which accommodate $W_{\cdot \text{ }}$ and $%
B_{\cdot }^{H}$.

For the time being, require that $b\in C_{c}^{\infty }((0,T)\times \mathbb{R}%
^{d}).$ Then, one can show (see e.g. \cite{Nualart}) that $%
X_{t}^{x}:=(S_{t}^{x_{1},x_{2}},\sigma _{t}^{x_{2}})^{\ast },x=(x_{1},x_{2})$
is Malliavin differentiable with respect to $Z=(Z^{(1)},Z^{(2)})^{\ast
}=(W,B^{H})^{\ast }$ with Malliavin derivative $D=(D^{W},D^{H})^{\ast }$ and
we obtain that%
\begin{eqnarray*}
&&D_{s}X_{t}^{x} \\
&=&\int_{s}^{t}\left( 
\begin{array}{ll}
\mu & 0 \\ 
0 & b^{\shortmid }(u,\sigma _{u}^{x_{2}})%
\end{array}%
\right) D_{s}X_{u}^{x}du \\
&&+\left( \sum_{j=1}^{2}\int_{s}^{t}\sum_{l=1}^{2}\frac{\partial }{\partial
x_{l}}a_{ij}(S_{u}^{x_{1},x_{2}},\sigma
_{u}^{x_{2}})(D_{s}X_{u}^{x})_{rl}dZ_{u}^{(j)}\right) _{1\leq i,r\leq 2} \\
&&+\chi _{_{\lbrack 0,t]}(s)}\left( a_{ij}(S_{s}^{x_{1},x_{2}},\sigma
_{s}^{x_{2}})\right) _{1\leq i,j\leq 2} \\
&=&\int_{s}^{t}\left( 
\begin{array}{ll}
\mu & 0 \\ 
0 & b^{\shortmid }(u,\sigma _{u}^{x_{2}})%
\end{array}%
\right) D_{s}X_{u}^{x}du \\
&&+\left( \int_{s}^{t}\sum_{l=1}^{2}\frac{\partial }{\partial x_{l}}%
a_{i1}(S_{u}^{x_{1},x_{2}},\sigma
_{u}^{x_{2}})(D_{s}X_{u}^{x})_{rl}dW_{u}\right) _{1\leq i,r\leq 2} \\
&&+\chi _{_{\lbrack 0,t]}(s)}\left( a_{ij}(S_{s}^{x_{1},x_{2}},\sigma
_{s}^{x_{2}})\right) _{1\leq i,j\leq 2}
\end{eqnarray*}%
where 
\begin{equation*}
\left( a_{ij}(x_{1},x_{2})\right) _{1\leq i,j\leq 2}=\left( 
\begin{array}{ll}
g(x_{2})x_{1} & 0 \\ 
\rho & \sqrt{1-\rho ^{2}}%
\end{array}%
\right) .
\end{equation*}%
Using very similar arguments as e.g. in \cite{Kunita}, we find that $%
X_{t}^{x,y}$ is twice continuously differentiable with respect to $(x,y)$.
Then, by following a similar line of reasoning as in the proof of Theorem %
\ref{Bismut} in combination with a substitution formula for Wiener integrals 
\cite[Theorem 3.2.9]{Nualart}, we get that

\begin{equation*}
D_{s}X_{t}^{x}=\frac{\partial }{\partial x}X_{t}^{s,X_{s}^{x}}\chi
_{_{\lbrack 0,t]}(s)}\left( a_{ij}(S_{s}^{x_{1},x_{2}},\sigma
_{s}^{x_{2}})\right) _{1\leq i,j\leq 2}.
\end{equation*}%
In a similar manner, we observe that 
\begin{equation*}
\frac{\partial }{\partial x}E[\Phi (X_{T}^{x,n})]=E[\Phi ^{\shortmid
}(X_{T}^{x})\frac{\partial }{\partial x}X_{T}^{s,X_{s}^{x}}\frac{\partial }{%
\partial x}X_{s}^{x}]
\end{equation*}%
for payoff functions $\Phi \in C_{c}^{\infty }(\mathbb{R}^{2})$. Hence,%
\begin{equation*}
\frac{\partial }{\partial x}E[\Phi (X_{T}^{x})]=E[\Phi ^{\shortmid
}(X_{T}^{x})D_{s}X_{T}^{x}\left( a_{ij}(S_{s}^{x_{1},x_{2}},\sigma
_{s}^{x_{2}})\right) _{1\leq i,j\leq 2}^{-1}\frac{\partial }{\partial x}%
X_{s}^{x}].
\end{equation*}%
So, for $a\in L^{\infty }(\left[ 0,T\right] )$ with $\int_{0}^{T}a(s)ds=1$
we can employ the chain rule with respect to $D_{\cdot \text{ }}$ and get
that%
\begin{eqnarray*}
&&\frac{\partial }{\partial x}E[\Phi (X_{T}^{x})] \\
&=&E[\int_{0}^{T}\{a(s)\Phi ^{\shortmid }(X_{T}^{x})D_{s}X_{T}^{x}\left(
a_{ij}(S_{s}^{x_{1},x_{2}},\sigma _{s}^{x_{2}})\right) _{1\leq i,j\leq
2}^{-1}\frac{\partial }{\partial x}X_{s}^{x}\}ds] \\
&=&E[\int_{0}^{T}\{a(s)D_{s}\Phi (X_{T}^{x})\left(
a_{ij}(S_{s}^{x_{1},x_{2}},\sigma _{s}^{x_{2}})\right) _{1\leq i,j\leq
2}^{-1}\frac{\partial }{\partial x}X_{s}^{x}\}ds]
\end{eqnarray*}%
We also see that%
\begin{eqnarray*}
&&\left( a_{ij}(S_{s}^{x_{1},x_{2}},\sigma _{s}^{x_{2}})\right) _{1\leq
i,j\leq 2}^{-1}\frac{\partial }{\partial x}X_{s}^{x} \\
&=&(S_{s}^{x_{1},x_{2}}g(\sigma _{s}^{x_{2}})\sqrt{1-\rho ^{2}})^{-1} \\
&&\cdot \left( 
\begin{array}{ll}
\sqrt{1-\rho ^{2}}\frac{\partial }{\partial x_{1}}S_{s}^{x_{1},x_{2}} & 
\sqrt{1-\rho ^{2}}\frac{\partial }{\partial x_{2}}S_{s}^{x_{1},x_{2}} \\ 
-\rho \frac{\partial }{\partial x_{1}}S_{s}^{x_{1},x_{2}} & -\rho \frac{%
\partial }{\partial x_{2}}\sigma _{s}^{x_{2}}+\frac{\partial }{\partial x_{2}%
}\sigma _{s}^{x_{2}}S_{s}^{x_{1},x_{2}}g(\sigma _{s}^{x_{2}})%
\end{array}%
\right) .
\end{eqnarray*}%
So it follows that%
\begin{eqnarray*}
&&D_{s}\Phi (X_{T}^{x})\left( a_{ij}(S_{s}^{x_{1},x_{2}},\sigma
_{s}^{x_{2}})\right) _{1\leq i,j\leq 2}^{-1}\frac{\partial }{\partial x}%
X_{s}^{x} \\
&=&(D_{s}^{W}\Phi (X_{T}^{x})(S_{s}^{x_{1},x_{2}}g(\sigma _{s}^{x_{2}}))^{-1}%
\frac{\partial }{\partial x_{1}}S_{s}^{x_{1},x_{2}} \\
&&-D_{s}^{H}\Phi (X_{T}^{x})\frac{\rho }{\sqrt{1-\rho ^{2}}}%
(S_{s}^{x_{1},x_{2}}g(\sigma _{s}^{x_{2}}))^{-1}\frac{\partial }{\partial
x_{1}}S_{s}^{x_{1},x_{2}}, \\
&&D_{s}^{W}\Phi (X_{T}^{x})(S_{s}^{x_{1},x_{2}}g(\sigma _{s}^{x_{2}}))^{-1}%
\frac{\partial }{\partial x_{2}}S_{s}^{x_{1},x_{2}} \\
&&+D_{s}^{H}\Phi (X_{T}^{x})(\frac{-\rho }{\sqrt{1-\rho ^{2}}}\frac{\partial 
}{\partial x_{2}}S_{s}^{x_{1},x_{2}}(S_{s}^{x_{1},x_{2}}g(\sigma
_{s}^{x_{2}}))^{-1} \\
&&+\frac{1}{\sqrt{1-\rho ^{2}}}\frac{\partial }{\partial x_{2}}\sigma
_{s}^{x_{2}}))^{\ast }.
\end{eqnarray*}%
Therefore, we obtain that%
\begin{eqnarray*}
&&\frac{\partial }{\partial x}E[\Phi (X_{T}^{x})] \\
&=&(E[\int_{0}^{T}a(s)(D_{s}^{W}\Phi (X_{T}^{x})(S_{s}^{x_{1},x_{2}}g(\sigma
_{s}^{x_{2}}))^{-1}\frac{\partial }{\partial x_{1}}S_{s}^{x_{1},x_{2}}ds] \\
&&-E[\int_{0}^{T}a(s)D_{s}^{H}\Phi (X_{T}^{x})\frac{\rho }{\sqrt{1-\rho ^{2}}%
}(S_{s}^{x_{1},x_{2}}g(\sigma _{s}^{x_{2}}))^{-1}\frac{\partial }{\partial
x_{1}}S_{s}^{x_{1},x_{2}}ds], \\
&&E[\int_{0}^{T}a(s)(D_{s}^{W}\Phi (X_{T}^{x})(S_{s}^{x_{1},x_{2}}g(\sigma
_{s}^{x_{2}}))^{-1}\frac{\partial }{\partial x_{2}}S_{s}^{x_{1},x_{2}}ds] \\
&&+E[\int_{0}^{T}a(s)D_{s}^{H}\Phi (X_{T}^{x})(\frac{-\rho }{\sqrt{1-\rho
^{2}}}\frac{\partial }{\partial x_{2}}%
S_{s}^{x_{1},x_{2}}(S_{s}^{x_{1},x_{2}}g(\sigma _{s}^{x_{2}}))^{-1} \\
&&+\frac{1}{\sqrt{1-\rho ^{2}}}\frac{\partial }{\partial x_{2}}\sigma
_{s}^{x_{2}})ds])^{\ast }.
\end{eqnarray*}%
The by exploiting the independence of $W_{\cdot }$ and $B_{\cdot }^{H}$, we
can use a similar reasoning as in the proof of Theorem \ref{Bismut} and find
that%
\begin{eqnarray*}
&&E[\int_{0}^{T}a(s)D_{s}^{H}\Phi (X_{T}^{x})(\frac{-\rho }{\sqrt{1-\rho ^{2}%
}}\frac{\partial }{\partial x_{2}}S_{s}^{x_{1},x_{2}}(S_{s}^{x_{1},x_{2}}g(%
\sigma _{s}^{x_{2}}))^{-1} \\
&&+\frac{1}{\sqrt{1-\rho ^{2}}}\frac{\partial }{\partial x_{2}}\sigma
_{s}^{x_{2}})ds] \\
&=&C_{H}E[\Phi (X_{T}^{x})\int_{0}^{T}u^{-H-\frac{1}{2}%
}\int_{u}^{T}a(s-u)(s-u)^{\frac{1}{2}-H}s^{H-\frac{1}{2}} \\
&&\cdot \{\frac{-\rho }{\sqrt{1-\rho ^{2}}}\frac{\partial }{\partial x_{2}}%
S_{s-u}^{x_{1},x_{2}}(S_{s-u}^{x_{1},x_{2}}g(\sigma _{s-u}^{x_{2}}))^{-1} \\
&&+\frac{1}{\sqrt{1-\rho ^{2}}}\frac{\partial }{\partial x_{2}}\sigma
_{s-u}^{x_{2}}\}dB_{s}du],
\end{eqnarray*}%
where $B_{\cdot }$ is a one-dimensional Brownian motion in the stochastic
integral representation (\ref{RepFractional}).

In the last step, we can use the duality formula with respect to $W_{\cdot }$
and similar arguments as in the proof of Theorem \ref{Bismut} (see also
Remark \ref{RemarkStrongSolution}) with respect to regular functions $g,$ $b,
$ $\Phi $ and derive the following BEL-formula for the stock price model (%
\ref{Stock}):

\begin{theorem}
\label{Volatility}Assume that $U\subset \mathbb{R}^{2}$ is a bounded, open
set and $b\in L_{\infty }^{1}$ in the stock price model (\ref{Stock}). In
addition, require that $g:\mathbb{R}\longrightarrow (\alpha ,\infty )$ is in 
$C_{b}^{2}(\mathbb{R})$ for some $\alpha >0$ and that $\Phi :\mathbb{R}%
^{2}\longrightarrow \mathbb{R}$ is a Borel-measurable function with%
\begin{equation*}
\Phi (S_{T}^{\cdot ,\cdot },\sigma _{T}^{\cdot })\in L^{2}(\Omega \times
U,\mu \times dx)\text{.}
\end{equation*}%
Let $a\in L^{\infty }(\left[ 0,T\right] )$ with $\int_{0}^{T}a(s)ds=1$. Then%
\begin{eqnarray}
&&\frac{\partial }{\partial x}E[\Phi (S_{T}^{x_{1},x_{2}},\sigma
_{T}^{x_{2}})]  \notag \\
&=&(E[\Phi (X_{T}^{x})\int_{0}^{T}a(s)(S_{s}^{x_{1},x_{2}}g(\sigma
_{s}^{x_{2}}))^{-1}\frac{\partial }{\partial x_{1}}S_{s}^{x_{1},x_{2}}dW_{s}]
\notag \\
&&-C_{H}E[\Phi (X_{T}^{x})\int_{0}^{T}u^{-H-\frac{1}{2}%
}\int_{u}^{T}a(s-u)(s-u)^{\frac{1}{2}-H}s^{H-\frac{1}{2}}  \notag \\
&&\cdot \{\frac{\rho }{\sqrt{1-\rho ^{2}}}(S_{s-u}^{x_{1},x_{2}}g(\sigma
_{s-u}^{x_{2}}))^{-1}\frac{\partial }{\partial x_{1}}S_{s-u}^{x_{1},x_{2}}%
\}dB_{s}du],  \notag \\
&&E[\Phi (X_{T}^{x})\int_{0}^{T}a(s)(S_{s}^{x_{1},x_{2}}g(\sigma
_{s}^{x_{2}}))^{-1}\frac{\partial }{\partial x_{2}}S_{s}^{x_{1},x_{2}}dW_{s}]
\notag \\
&&+C_{H}E[\Phi (X_{T}^{x})\int_{0}^{T}u^{-H-\frac{1}{2}%
}\int_{u}^{T}a(s-u)(s-u)^{\frac{1}{2}-H}s^{H-\frac{1}{2}}  \notag \\
&&\cdot \{\frac{-\rho }{\sqrt{1-\rho ^{2}}}\frac{\partial }{\partial x_{2}}%
S_{s-u}^{x_{1},x_{2}}(S_{s-u}^{x_{1},x_{2}}g(\sigma _{s-u}^{x_{2}}))^{-1} 
\notag \\
&&+\frac{1}{\sqrt{1-\rho ^{2}}}\frac{\partial }{\partial x_{2}}\sigma
_{s-u}^{x_{2}}\}dB_{s}du])^{\ast }  \label{SBEL}
\end{eqnarray}%
for almost all $x=(x_{1},x_{2})\in U$, where $C_{H}$ is a constant defined
as in Theorem \ref{Bismut}.
\end{theorem}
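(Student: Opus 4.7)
The plan is to follow the same scheme that underlies Theorem \ref{Bismut}: first establish the formula on a dense subclass where all ingredients are smooth, and then pass to the limit using the regularity estimates already obtained in Theorem \ref{StrongSolution} and estimate (\ref{SecondD}). Concretely, I would approximate $b \in L_\infty^1$ by a sequence $b_n \in C_c^\infty((0,T)\times \mathbb{R})$ with $b_n \to b$ in $L_\infty^1$, approximate $\Phi$ by a sequence $\Phi_n \in C_c^\infty(\mathbb{R}^2)$ converging to $\Phi$ in $L^2(\Omega\times U)$, and denote by $X_t^{x,n}=(S_t^{x,n},\sigma_t^{x_2,n})^\ast$ the corresponding smooth solutions. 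By standard Malliavin calculus for smooth SDEs (see \cite{Nualart}), $X_t^{x,n}$ is differentiable with respect to $Z=(W,B^H)^\ast$, the derivative $D_s X_t^{x,n}$ satisfies the linear variational equation written out in the excerpt, and $x\mapsto X_t^{x,n}$ is $C^2$. The chain rule then gives the key flow identity $D_s X_t^{x,n}=\frac{\partial}{\partial x}X_t^{s,X_s^{x,n},n}\,\chi_{[0,t]}(s)\,a(S_s^{x,n},\sigma_s^{x_2,n})$, which by the Markov property yields the representation for $\frac{\partial}{\partial x}\mathbb{E}[\Phi_n(X_T^{x,n})]$ displayed before the theorem.

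Next, I would invert the diffusion matrix $a(S_s^{x,n},\sigma_s^{x_2,n})$ explicitly (its determinant equals $\sqrt{1-\rho^2}\,g(\sigma_s^{x_2,n})S_s^{x_1,x_2,n}$, which is bounded below thanks to the hypothesis $g>\alpha>0$), and reorganize the resulting expression into a sum of two kinds of terms: those carrying a Malliavin derivative $D^W$ along the Wiener direction, and those carrying $D^H$ along the fractional direction. For the $D^W$ terms I would apply the ordinary duality (integration by parts) formula for the Wiener process, producing an It\^o integral against $dW$; this step is immediate because $(S_s^{x,n}g(\sigma_s^{x_2,n}))^{-1}\frac{\partial}{\partial x_i}S_s^{x,n}$ is adapted and sufficiently integrable. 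For the $D^H$ terms, I would invoke the transfer argument from the proof of Theorem \ref{Bismut}: using the representation (\ref{RepFractional}) $B_t^H=\int_0^t K_H(t,s)\,dB_s$ together with the operator $\mathcal{K}_H$ and its inverse, one converts $D^H$ to a Malliavin derivative with respect to the auxiliary Brownian motion $B$, applies the Clark--Ocone formula, and the weight function $a(\cdot)$ with $\int_0^T a=1$ produces the double integral $\int_0^T u^{-H-1/2}\int_u^T a(s-u)(s-u)^{1/2-H}s^{H-1/2}(\cdot)\,dB_s\,du$ appearing in (\ref{SBEL}), together with the constant $C_H$.

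The final step is the limit passage $n\to\infty$. On the right-hand side of (\ref{SBEL}), the stochastic integrals involve $\frac{\partial}{\partial x}X_s^{x,n}$ and $X_s^{x,n}$ itself; the uniform $L^p$-bounds on the spatial derivatives from (\ref{DerivativeEstimate}) combined with the second-order estimate (\ref{SecondD}) (which, as noted in the remark following Theorem \ref{Bismut}, transfers to the two-dimensional system of Remark \ref{RemarkStrongSolution} via Girsanov combined with Lemma \ref{Exp}) guarantee that these spatial derivatives converge in $L^p$ to the corresponding derivatives of the limit flow; combined with the $L^2$ convergence of $\Phi_n(X_T^{x,n})$ this gives convergence of the right-hand side. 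For the left-hand side one argues locally on $U$: since $\Phi_n\to\Phi$ in $L^2(\Omega\times U)$, the maps $x\mapsto \mathbb{E}[\Phi_n(X_T^{x})]$ converge in $L^2(U)$, and the bounds on the right-hand side show that their weak (Sobolev) derivatives converge in $L^2(U)$, which identifies the weak gradient of $x\mapsto \mathbb{E}[\Phi(X_T^x)]$ as the right-hand side of (\ref{SBEL}) for a.e.\ $x\in U$.

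The main obstacle I expect is controlling the factor $(S_s^{x,n}g(\sigma_s^{x_2,n}))^{-1}\frac{\partial}{\partial x_i}S_s^{x,n}$ inside the stochastic integrals uniformly in $n$ in a sufficiently strong norm: one needs $L^p$-estimates on negative moments of $S_s^{x,n}$ and positive moments of its spatial derivatives, both uniform in $n$, and moreover one needs these estimates to survive the Girsanov change of measure used to remove the drift $b_n$ so that Lemma \ref{Exp} applies. The boundedness $g\geq \alpha>0$ together with the explicit exponential form of $S_s^{x,n}$ takes care of the negative moments, but the interplay with the fractional Brownian integrand through $\mathcal{K}_H^{-1}$ is the delicate point; all other ingredients are adaptations of the proof of Theorem \ref{Bismut}.
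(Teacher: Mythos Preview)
Your proposal is correct and follows essentially the same approach as the paper: smooth approximation of $b$ and $\Phi$, the flow identity $D_s X_t^{x,n}=\frac{\partial}{\partial x}X_t^{s,X_s^{x,n},n}\chi_{[0,t]}(s)\,a(X_s^{x,n})$, explicit inversion of the diffusion matrix, splitting into $D^W$ and $D^H$ contributions, duality for $D^W$ and the fractional transfer argument from Theorem~\ref{Bismut} for $D^H$, and a limit passage via the derivative estimates (\ref{DerivativeEstimate}). The paper additionally highlights the role of the independence of $W_\cdot$ and $B_\cdot^H$ when handling the $D^H$ terms (so that the Theorem~\ref{Bismut} machinery applies conditionally on $W$), which you should make explicit, but otherwise your outline matches the paper's argument.
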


\begin{remark}
In fact, one can prove as in \cite{ACHP} by using the estimate \ref{SecondD}
that the right hand side of (\ref{SBEL}) has a continuous modification, if $%
H<\frac{1}{2(d+3)}=\frac{1}{8}$ (for $d=1$).
\end{remark}

\bigskip

We mention that our stock price model (\ref{Stock}) coincides with the model
of Gatheral, Jaisson, Rosenbaum \cite{G} in the case of independent $\sigma
_{t}^{x_{2}}$, $W_{t},0\leq t\leq T$, when (formally)%
\begin{equation*}
g(x)=\exp (x/\nu ),b(t,x)=-a\nu (x-b)\text{ and }\rho =0
\end{equation*}%
or more explicitly, when 
\begin{eqnarray}
S_{t}^{x_{1},x_{2}} &=&x_{1}+\int_{0}^{t}\mu
S_{u}^{x_{1},x_{2}}du+\int_{0}^{t}\exp (\sigma _{u}^{x_{2}}/\nu
)S_{u}^{x_{1},x_{2}}dW_{u}  \notag \\
\sigma _{t}^{x_{2}} &=&x_{2}/\nu -\int_{0}^{t}a\nu (\sigma
_{u}^{x_{2}}-b)du+B_{t}^{H},x_{1},x_{2}\in \mathbb{R},0\leq t\leq T,
\label{fOU}
\end{eqnarray}%
for $a,\nu >0,b\in \mathbb{R}$.

The process $\sigma _{t}^{x_{2}},0\leq t\leq T$ in (\ref{fOU}) is a
stationary and mean reverting process, which can be regarded as a
generalization of the Vasicek model for short rates in the case of
stochastic log-volatility with (log-volatility) mean reversion $a\nu $ and
long-run average level $b$.

It turns out that $\sigma _{t}^{x_{2}},0\leq t\leq T$ has the explicit
representation%
\begin{equation*}
\sigma _{t}^{x_{2}}=(x_{2}/\nu )+b(1-e^{-a\nu t})+\int_{0}^{t}e^{-a\nu
(t-s)}dB_{s}^{H},
\end{equation*}%
where the last term on the right hand side is defined as a Young integral
with respect to the integrator $B_{s}^{H}$ for $H<\frac{1}{2}$.\ See \cite%
{Young}.

However, due to economical crises, financial disasters or market regulations
one would expect to observe a change with respect to the behaviour of the
dynamics $\sigma _{t}^{x_{2}}$, that is e.g. a "regime change" from a
log-volatility long-run average level $b_{1}$ to $b_{2}$ or from a mean
reversion $a_{1}$ to $a_{2}$, provided $\sigma _{t}^{x_{2}}$ exceeds a
certain threshold $R$. In order to capture such "regime switching" effects
and the roughness of paths of $\sigma _{t}^{x_{2}}$, whose empirical
evidence was found in \cite{G} and which is modelled by means of $B_{t}^{H}$
for small Hurst parameters $H<\frac{1}{2}$, it is natural to take the
stochastic volatility model (\ref{fOU}) as a starting point and to modify it
as follows:%
\begin{equation}
\sigma _{t}^{x_{2}}=x_{2}/\nu -\int_{0}^{t}b(u,\sigma
_{u}^{x_{2}})du+B_{t}^{H},0\leq t\leq T,  \label{RSfOU}
\end{equation}%
where $b$ is a \emph{discontinuous} vector field of linear growth given by%
\begin{equation}
b(t,y)=a_{1}(y-b_{1})1_{(-\infty ,R)}(y)+a_{2}(y-b_{2})1_{[R,\infty )}(y)
\label{RS}
\end{equation}%
for some $a_{1},a_{2}>0,R,b_{1},b_{2}\in \mathbb{R}.$

If $g(x)=\exp (x)$ and if $\sigma _{t}^{x_{2}}$ follows the dynamics in (\ref%
{RSfOU}), one observes that the stock price process $S_{t}^{x_{1},x_{2}}$ in
(\ref{Stock}) is not square integrable in general and hence not Malliavin
differentiable. Therefore we cannot directly use Malliavin techniques here
to derive a BEL-formula as in Theorem \ref{Volatility} for this situation.
In order to overcome this deficiency, one may in view of applications
instead replace the exponential function by a function $g:\mathbb{%
R\longrightarrow }\mathbb{R}$ such that%
\begin{equation}
g(x)=\exp (f(x)),  \label{Truncation}
\end{equation}%
where $f:\mathbb{R\longrightarrow }\mathbb{R}$ is a smooth compactly
supported function with $f(x)=x$ on $[-l,l]$ for some large $l>0$.

Hence, in summary a reasonable applicable stochastic volatility model in our
setting, which takes into account both volatility roughness and regime
switching effects, could be the following: 
\begin{eqnarray}
S_{t}^{x_{1},x_{2}} &=&x_{1}+\int_{0}^{t}\mu
S_{u}^{x_{1},x_{2}}du+\int_{0}^{t}g(\sigma _{u}^{x_{2}}/\nu
)S_{u}^{x_{1},x_{2}}dW_{u}  \notag \\
\sigma _{t}^{x_{2}} &=&x_{2}/\nu -\int_{0}^{t}b(u,\sigma
_{u}^{x_{2}})du+B_{t}^{H},x_{1},x_{2}\in \mathbb{R},0\leq t\leq T,
\label{Our}
\end{eqnarray}%
where $g$ is a smooth function of the form (\ref{Truncation}) and where the
vector field $b$ is given by (\ref{RS}).

Since the coefficient $b$ in (\ref{Our}) can be decomposed as a sum of
measurable bounded function and a Lipschitz function (of linear growth), one
can in fact prove the following BEL-representation:

\begin{theorem}
\label{Volatility2}Let $U\subset \mathbb{R}^{2}$ be a bounded, open. Suppose
that $b:[0,T]\times \mathbb{R}\longrightarrow \mathbb{R}$ in the stock price
model (\ref{Stock}) has the decomposition%
\begin{equation}
b=\widetilde{b}+\widehat{b},  \label{Decomp}
\end{equation}%
where $\widetilde{b}\in L^{\infty }([0,T]\times \mathbb{R})$ and where $%
\widehat{b}:[0,T]\times \mathbb{R}\longrightarrow \mathbb{R}$ satisfies a
linear growth and Lipschitz condition uniformly in time. Further, require
that $g:\mathbb{R}\longrightarrow (0,\infty )$ is given as in (\ref%
{Truncation}) and that $\Phi :\mathbb{R}^{2}\longrightarrow \mathbb{R}$ is a
Borel-measurable function such that%
\begin{equation*}
\Phi (S_{T}^{\cdot ,\cdot },\sigma _{T}^{\cdot })\in L^{2}(\Omega \times
U,\mu \times dx)\text{.}
\end{equation*}%
Let $a$ be a bounded and measurable function on $[0,T]$, which sums up to $1$%
. Then for $H<\frac{1}{2}$ and $\rho \in (-1,0)\cup (0,1)$ the function $%
u:U\longrightarrow \mathbb{R}$ defined by%
\begin{equation*}
u(x)=E[\Phi (S_{T}^{x_{1},x_{2}},\sigma _{T}^{x_{2}})],x\in U
\end{equation*}%
belongs to $C^{1}(U)$ and $\frac{\partial }{\partial x}u(x)$ has the
representation (\ref{SBEL}).
\end{theorem}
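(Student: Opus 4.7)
The plan is to reduce the claim to the smooth-coefficient setting already handled in the derivation preceding Theorem \ref{Volatility} and then pass to the limit using Theorem \ref{Ouknine} together with the stability result announced at the end of the section. First I would mollify the decomposition: writing $b = \widetilde{b} + \widehat{b}$, I approximate each piece separately by smooth compactly supported $b_n = \widetilde{b}_n + \widehat{b}_n$ in such a way that $\|\widetilde{b}_n\|_{\infty} \leq \|\widetilde{b}\|_{\infty}$ uniformly and $\widehat{b}_n$ inherits uniform Lipschitz and linear-growth constants from $\widehat{b}$. Similarly I approximate $\Phi$ by $\Phi_m \in C_c^{\infty}(\mathbb{R}^2)$ in $L^2(\Omega \times U, \mu \times dx)$. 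Since $g$ obtained from the truncation (\ref{Truncation}) satisfies $g \geq \alpha > 0$ and the determinant of the diffusion matrix $(a_{ij})$ equals $S g(\sigma)\sqrt{1-\rho^2}$, the hypothesis $\rho \in (-1,0) \cup (0,1)$ ensures the matrix inversion used in the derivation of (\ref{SBEL}) is well-posed. Applying that derivation verbatim to the smooth $b_n$ (where no $L_\infty^1$ assumption is needed since smoothness yields classical stochastic flows) produces (\ref{SBEL}) with $(S_t^{x,n}, \sigma_t^{x,n})$ and $\Phi_m$ in place of $(S_t^x, \sigma_t^x)$ and $\Phi$.

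Next I would pass to the limit. Theorem \ref{Ouknine}, extended to the mixed noise $\sqrt{1-\rho^2}\,B^H + \rho W$ in the spirit of Remark \ref{RemarkStrongSolution}, provides a unique Malliavin-differentiable strong solution $\sigma^{x_2}$ which is also Sobolev differentiable in $x_2$; a Gronwall argument for the linear SDE in $S$ transfers these properties to $S^{x_1,x_2}$. The announced stability estimate gives $L^2(\mu)$-convergence $\sigma_t^{x,n} \to \sigma_t^{x}$ and, by repeating the compactness/Picard argument of the proof of Theorem \ref{Ouknine}, convergence of both the Malliavin derivatives $D\sigma_t^{x,n} \to D\sigma_t^x$ and the spatial derivatives $\partial_{x_2}\sigma_t^{x,n} \to \partial_{x_2}\sigma_t^x$ in $L^2$, together with uniform $L^p$-bounds of the type $\sup_n \sup_{x \in U} E[\|\partial_x X_s^{x,n}\|^p] < \infty$ in the spirit of (\ref{DerivativeEstimate}). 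These inputs let me pass to the limit $n \to \infty$ on both sides of the BEL formula for $(b_n, \Phi_m)$ by dominated convergence (every stochastic integral on the right-hand side has integrands uniformly bounded in $L^2$), followed by $m \to \infty$ using $L^2$-convergence of $\Phi_m$, which recovers (\ref{SBEL}) for almost every $x \in U$.

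For the $C^1(U)$ conclusion, I would show that the right-hand side of (\ref{SBEL}) depends continuously on $x \in U$. The map $x \mapsto (S^x, \sigma^x)$ is continuous from $U$ into $L^2(\mu)$ by the stability estimate, and this continuity propagates to the Malliavin and Sobolev derivatives through the same uniform bounds; dominated convergence then yields continuity of each stochastic integral on the right-hand side, so the a.e.\ identity from the previous step identifies a continuous representative of $\partial u/\partial x$, whence $u \in C^1(U)$ with derivative given by (\ref{SBEL}). The hardest step will be the uniform-in-$n$ control of $\partial_x X^{x,n}$ and of the Malliavin derivatives when the mollified bounded piece $\widetilde{b}_n$ carries no Lipschitz structure; this is handled by reapplying the local time variational calculus argument combined with Girsanov's theorem and Lemma \ref{Exp} from the proof of Theorem \ref{StrongSolution}, adapted to the mixed-noise, bounded-plus-Lipschitz setting --- which is exactly the content of Theorem \ref{Ouknine} and its companion stability estimate.
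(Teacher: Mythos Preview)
Your strategy is essentially the paper's: approximate $b$ by smooth $b_n=\widetilde{b}_n+\widehat{b}_n$, apply the computation preceding Theorem \ref{Volatility} to the smooth system, and pass to the limit via Theorem \ref{Ouknine}. The paper's proof is in fact a one-line reference to exactly these ingredients together with Lemma \ref{u(x)}.

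Two points where you diverge from the paper are worth noting. First, for the $C^{1}$ conclusion the paper does not argue continuity of the right-hand side of (\ref{SBEL}) in $x$; instead it invokes Lemma \ref{u(x)}, which shows directly that $u_n'(x)\to \overline{u}(x)$ \emph{uniformly on compacts}, so $u\in C^{1}$ with $u'=\overline{u}$. This is cleaner than your route and avoids having to prove $x$-continuity of the stochastic integrals. Second, you lean on Proposition \ref{Stability} (``the announced stability estimate'') both for $L^{2}$-convergence and for continuity in $x$, but that proposition carries the restriction $H<\tfrac{1}{6}$, whereas Theorem \ref{Volatility2} is stated for all $H<\tfrac{1}{2}$. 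The paper does not use Proposition \ref{Stability} here; the convergence $X_t^{x,n}\to X_t^{x}$ in $L^{2}$ comes from (\ref{Lto}) in the proof of Theorem \ref{Ouknine}, and the derivative control is packaged in Lemma \ref{RepDer} and Lemma \ref{u(x)}. Similarly, your closing appeal to Lemma \ref{Exp} is misplaced: that lemma is specific to the $L_{\infty}^{1}$ setting with $H<\tfrac{1}{2(d+1)}$, whereas the uniform bounds needed here (for $H<\tfrac12$) are obtained in the proof of Theorem \ref{Ouknine} via Girsanov with respect to $W_{\cdot}$, the local time-space integral of Lemmas \ref{Eisenbaum}--\ref{IBP}, and the transfer principle, not via Lemma \ref{Exp}. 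Replace those two references by Lemma \ref{u(x)} and the estimates inside the proof of Theorem \ref{Ouknine}, and your outline matches the paper.
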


\bigskip

The proof of Theorem \ref{Volatility2} requires some notions and auxiliary
results.

Using the compactness criterion for square integrable functionals of Wiener
processes from Malliavin calculus \cite{DMN}, the proof of the next result
gives an alternative method for the construction of unique strong solutions
of (\ref{Pardoux}) to the work of \cite{NO} in the case of vector fields
given by (\ref{Decomp}).

\begin{theorem}
\label{Ouknine}Consider the SDE%
\begin{equation}
X_{t}^{x}=x+\int_{0}^{t}b(u,X_{u}^{x})du+\rho _{1}B_{t}^{H}+\rho
_{2}W_{t},x\in \mathbb{R},0\leq t\leq T,  \label{Pardoux}
\end{equation}%
where $B_{\cdot }^{H}$ is a fractional Brownian motion with $H<\frac{1}{2}$
being independent of a Wiener process $W_{\cdot }$, where $\rho _{1},\rho
_{2}\in \mathbb{R}\setminus \{0\}$. Assume for $b$ the decomposition (\ref%
{Decomp}) with respect to coefficients $\widetilde{b},\widehat{b}$
satisfying the conditions of Theorem \ref{Volatility2}. Then there exists a
unique strong solution $X_{\cdot }^{x}$ to the SDE (\ref{Pardoux}).
Moreover, $X_{t}^{x}$ is Malliavin differentiable in the direction of $%
(B_{\cdot },W_{\cdot })^{\ast }$ (and $(B_{\cdot }^{H},W_{\cdot })^{\ast }$)
for all $t$, where $B_{\cdot }$ is the Wiener process in the stochastic
integral representation of $B_{\cdot }^{H}$.
\end{theorem}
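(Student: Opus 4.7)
The plan is to adapt the compactness strategy from the proof of Theorem \ref{StrongSolution} to drifts of the form $b = \tilde{b} + \hat{b}$ with $\tilde{b}$ bounded measurable and $\hat{b}$ Lipschitz of linear growth. First I would pick smooth compactly supported truncations $\tilde{b}_n \in C_c^\infty([0,T] \times \mathbb{R})$ with $\|\tilde{b}_n\|_\infty \leq \|\tilde{b}\|_\infty + 1$ and $\tilde{b}_n \to \tilde{b}$ a.e., and set $b_n := \tilde{b}_n + \hat{b}$. The resulting smooth coefficients of linear growth yield a sequence of unique Malliavin-differentiable strong solutions $X^{x,n}$ to the approximating SDEs
\begin{equation*}
X_t^{x,n} = x + \int_0^t b_n(u, X_u^{x,n})\, du + \rho_1 B_t^H + \rho_2 W_t, \quad 0 \leq t \leq T,
\end{equation*}
by classical fixed-point arguments.

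The key step is obtaining Malliavin derivative bounds in the direction of the Wiener process $B$ underlying $B^H$ (and analogously of $W$) uniformly in $n$, in the spirit of (\ref{D1})--(\ref{D2}). Picard iterating the Malliavin derivative equation yields a series whose $m$-th term involves $\prod_{j=1}^m b_n'(s_j, X_{s_j}^{x,n})$. Expanding $b_n' = \tilde{b}_n' + \hat{b}'$ splits each term into $2^m$ summands; the $\hat{b}'$ factors are trivially bounded by the Lipschitz constant $L$, while the $\tilde{b}_n'$ factors are handled, after a Girsanov transformation that reduces $X^{x,n}$ to a shift of the free noise (as in step~2 of the proof of Theorem \ref{StrongSolution} and Remark \ref{RemarkStrongSolution}), by the local time variational argument (\ref{ibp0}) combined with Lemma \ref{Exp} applied to the bounded, compactly supported $\tilde{b}_n$. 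The factorial-type decay furnished by (\ref{ibp0}) absorbs the extra $L^m$ coming from the Lipschitz factors and yields summability uniformly in $n$.

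Once these bounds are in place, the compactness criterion of \cite{DMN} furnishes, for each fixed $t$, a subsequence $X_t^{x,n_k}$ converging in $L^2(\mu)$ to a limit $Y_t$. Following step~3 of the proof of Theorem \ref{StrongSolution}, I would identify $Y_t$ with $E[X_t^x \mid \mathcal{F}_t]$, where $X^x$ is a weak solution of (\ref{Pardoux}) produced by a Girsanov argument analogous to Corollary \ref{WeakSolution} (for the bounded part $\tilde{b}$) combined with a standard Gronwall estimate for the Lipschitz part $\hat{b}$. Adaptedness of $X_t^x$ to $\mathcal{F}_t$ then gives $X_t^x = Y_t$, establishing strong existence, while Malliavin differentiability is a direct consequence of the compactness criterion. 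Strong uniqueness follows from a Girsanov transformation that removes $\tilde{b}$ and a Gronwall argument on the Lipschitz part $\hat{b}$.

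The hard part will be controlling the Girsanov density and the Picard expansion uniformly in $n$ in the presence of the linearly growing Lipschitz component $\hat{b}$: the density used to reduce $X^{x,n}$ to the free noise naturally involves the full drift $b_n$, and the unbounded $\hat{b}$ component prevents a direct application of Novikov's condition and of Lemma \ref{Exp}. This can be addressed either by Girsanov-ing only the bounded part $\tilde{b}_n$ and treating $\hat{b}$ pathwise inside the local time estimates, or by a spatial localization of $X^{x,n}$ (e.g.\ stopping when $|X^{x,n}|$ first exceeds $R$, taking $R \to \infty$ using moment bounds coming from the linear growth of $\hat{b}$); either route preserves the summability of the Picard series while leaving the analytic structure of Theorem \ref{StrongSolution} intact.
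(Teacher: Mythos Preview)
Your overall architecture---approximate, get uniform Malliavin bounds, apply the compactness criterion of \cite{DMN}, identify the limit via conditional expectations---matches the paper. But the analytic engine you propose is the wrong one for this statement, and there is a real gap.

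The theorem asserts the result for \emph{all} $H<\tfrac12$. The tools you invoke---the fractional local-time variational argument (\ref{ibp0}), Theorem~\ref{mainestimate}, and Lemma~\ref{Exp}---are the machinery of Section~2 and carry the restriction $H<\tfrac{1}{2(d+2)}=\tfrac16$. So even if your Picard expansion could be made to converge, you would only recover the result on the range $H<\tfrac16$. Moreover, the Girsanov step you need removes the \emph{full} drift $b_n$, and Lemma~\ref{Exp} does not apply to the linearly growing $\widehat b$; your fix~(a) (Girsanov only $\widetilde b_n$) leaves you with an SDE driven by drift $\widehat b$ under the new measure, so $X^{x,n}$ is no longer the free noise and (\ref{ibp0}) does not apply; fix~(b) (localisation) is not developed.

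The paper's proof takes a genuinely different route that exploits two features you have not used: the dimension $d=1$ and the presence of an independent Wiener component ($\rho_2\neq 0$). In one dimension the Malliavin derivative satisfies a linear scalar ODE and has the closed form
\[
D_s^W X_t^{x,n}=\rho_2\exp\Big(\int_s^t b_n'(u,X_u^{x,n})\,du\Big),\qquad
D_s^H X_t^{x,n}=\rho_1\exp\Big(\int_s^t b_n'(u,X_u^{x,n})\,du\Big),
\]
so no Picard series is needed. Girsanov is then performed with respect to $W$ alone (not $B^H$); the density involves $b_n(u,x+\rho_1 B_u^H+\rho_2 W_u)$, and its moments are controlled for linearly growing $b_n$ by the Bene\v{s}-type estimate from \cite{BMPD} (cf.\ (\ref{DIneq})). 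After this change of measure the $\widehat b_n'$ contribution is bounded by the Lipschitz constant, while the $\widetilde b_n'$ contribution is handled, conditionally on $B^H$, via the \emph{Brownian} local-time representation of Eisenbaum (Lemma~\ref{Eisenbaum} and Lemma~\ref{IBP}), which requires no smallness of $H$. This yields directly $\|D_s^W X_t^{x,n}-D_{s'}^W X_t^{x,n}\|_{L^2}^2\le C|s-s'|$ uniformly in $n$. For $D^B$, the paper does not repeat the analysis but invokes the transfer principle (Nualart, Proposition~5.2.1) to write $D^B$ in terms of $D^H$ and $\partial_t K_H$, and then estimates the resulting five terms $I_1,\dots,I_5$ using the H\"older-type bound (\ref{Keq}) on $K_H$ together with the already-established $|s-s'|^{1/2}$ continuity of $D^H$. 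The identification and uniqueness steps are as you describe.

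In short: replace your Picard-plus-fractional-local-time plan by the exponential representation, Girsanov in $W$, Eisenbaum's formula for the bounded part, and the transfer principle for $D^B$. That is what makes the argument go through for every $H<\tfrac12$.
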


Let us now recall the concept of the local time-space integral, which goes
back to \cite{Eisenbaum} and which in the following form was given in \cite%
{BMPD}:

\begin{definition}
\label{LocalTime}Let $(\mathcal{H}^{x},\left\Vert \cdot \right\Vert )$ be
the Banach space of Borel measurable functions $f:[0,T]\times \mathbb{R}%
\longrightarrow $ $\mathbb{R}$ endowed with the norm $\left\Vert \cdot
\right\Vert _{x}$ given by%
\begin{eqnarray*}
\left\Vert f\right\Vert _{x} &:&=2(\int_{0}^{T}\int_{\mathbb{R}}f^{2}(s,y)%
\frac{1}{\sqrt{2\pi s}}\exp (-\frac{\left\vert y-x\right\vert ^{2}}{2s}%
)dyds)^{1/2} \\
&&+\int_{0}^{T}\int_{\mathbb{R}}\left\vert y-x\right\vert \left\vert
f(s,y)\right\vert \frac{1}{s\sqrt{2\pi s}}\exp (-\frac{\left\vert
y-x\right\vert ^{2}}{2s})dyds.
\end{eqnarray*}%
Denote by $f_{\Delta }:[0,T]\times \mathbb{R}\longrightarrow $ $\mathbb{R}$
a simple function of the form%
\begin{equation*}
f_{\Delta }(s,x)=\sum_{1\leq i\leq n-1,1\leq j\leq
m-1}f_{ij}1_{(y_{i},y_{i+1}]}(y)1_{(s_{j},s_{j+1}]}(s),
\end{equation*}%
where $(s_{j})_{1\leq j\leq m}$ is a partition of $[0,T]$ and where $%
(y_{i})_{1\leq i\leq n}$ and $(f_{ij})_{1\leq i\leq n,1\leq j\leq m}$ are
finite sequences of real numbers. Let $L^{X^{x}}(t,y)$ be the local time of
the solution $X_{\cdot }^{x}$ to (\ref{Pardoux}) for $\rho _{1}=0$ and $\rho
_{2}=1$ and vector fields $b$ as in (\ref{Decomp}). Then the local
time-space integral of a simple function $f_{\Delta }$ with respect to the
integrator $L^{X^{x}}(dt,dy)$ is defined by%
\begin{eqnarray*}
&&\int_{0}^{T}\int_{\mathbb{R}}f_{\Delta }(s,y)L^{X^{x}}(ds,dy) \\
&=&\sum_{1\leq i\leq n-1,1\leq j\leq
m-1}f_{ij}(L^{X^{x}}(s_{j+1},y_{i+1})-L^{X^{x}}(s_{j},y_{i+1})-L^{X^{x}}(s_{j+1},y_{i})+L^{X^{x}}(s_{j},y_{i})).
\end{eqnarray*}%
The class of simple functions is dense in $(\mathcal{H}^{x},\left\Vert \cdot
\right\Vert )$. For $f\in \mathcal{H}^{x}$ let $f_{n},n\geq 1$ be a sequence
of simple functions converging to $f$ in $\mathcal{H}^{x}$. Then the local
time-space integral of $f$ can be defined as the following (existing) limit
in probability:%
\begin{equation*}
\int_{0}^{T}\int_{\mathbb{R}}f(s,y)L^{X^{x}}(ds,dy):=\lim_{n\longrightarrow
\infty }\int_{0}^{T}\int_{\mathbb{R}}f_{n}(s,y)L^{X^{x}}(ds,dy).
\end{equation*}%
See Lemma 2.7 in \cite{BMPD}.
\end{definition}

\bigskip

In the sequel, we define%
\begin{equation}
\int_{0}^{t}\int_{\mathbb{R}}f(s,y)L^{X^{x}}(ds,dy):=\int_{0}^{T}\int_{%
\mathbb{R}}1_{[0,t]}(s)f(s,y)L^{X^{x}}(ds,dy)  \label{t}
\end{equation}%
for $0\leq t\leq T$, if $f\in \mathcal{H}^{x}$.

We also need the following representation of local time-space integrals (\ref%
{t}) in the case of $X_{t}^{x}=W_{t}^{x}:=x+W_{t}$, which is due to \cite%
{Eisenbaum}:

\begin{lemma}
\label{Eisenbaum}If $f\in \mathcal{H}^{0}$, then%
\begin{eqnarray}
&&\int_{0}^{t}\int_{\mathbb{R}}f(s,y)L^{W^{x}}(ds,dy)  \notag \\
&=&\int_{0}^{t}f(s,W_{s}^{x})dW_{s}+\int_{T-t}^{T}f(T-s,\widehat{W}%
_{s}^{x})dW_{s}^{\ast }-\int_{T-t}^{T}f(T-s,\widehat{W}_{s}^{x})\frac{%
\widehat{W}_{s}}{T-s}ds,  \label{Eisen}
\end{eqnarray}%
where $\widehat{W}_{t}:=W_{T-t},0\leq t\leq T$ is the time-reversed Wiener
process and 
\begin{equation*}
W_{t}^{\ast }:=\widehat{W}_{t}-W_{T}+\int_{0}^{t}\frac{\widehat{W}_{s}}{T-s}%
ds,0\leq t\leq T
\end{equation*}%
is a Wiener process with respect to the filtration of $\widehat{W}$.
\end{lemma}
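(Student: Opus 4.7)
The approach is the classical one of Eisenbaum: establish the identity first for simple functions $f_\Delta$ of the form appearing in Definition \ref{LocalTime}, and then extend to arbitrary $f \in \mathcal{H}^0$ by the density statement quoted there. By bilinearity of both sides it suffices, in the first step, to treat a single rectangle $f(s,y) = 1_{(a,b]}(y) 1_{(c,d]}(s)$, for which the left-hand side of \eqref{Eisen} reduces by definition to the second difference $L^{W^x}(d,b) - L^{W^x}(d,a) - L^{W^x}(c,b) + L^{W^x}(c,a)$.

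The forward It\^o integral $\int_c^d f(s,W_s^x)\, dW_s$ is produced by applying Tanaka's formula to $(W_u^x - y)^+$ at $y \in \{a,b\}$ and taking the appropriate linear combination: the four martingale contributions collapse via the identity $1_{\{a < W_s^x \le b\}} = f(s,W_s^x)$, leaving boundary terms that depend only on the values $W_c^x$ and $W_d^x$. The crux of the proof is then to rewrite these boundary terms as the two backward integrals in \eqref{Eisen}. Here I would invoke the time-reversal theorem of Pardoux: the process $W^*$ defined in the statement is a standard Brownian motion for the natural filtration of $\widehat{W}$, and $\widehat{W}$ admits the backward semimartingale decomposition $d\widehat{W}_s = dW_s^* - (\widehat{W}_s/(T-s))\, ds$. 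Applying Tanaka's formula once more, now to $(\widehat{W}_u^x - y)^+$ in the backward filtration, and taking the same second difference at $y = a,b$ converts the boundary terms into precisely $\int_{T-t}^T f(T-s,\widehat{W}_s^x)\, dW_s^* - \int_{T-t}^T f(T-s,\widehat{W}_s^x)\, \widehat{W}_s/(T-s)\, ds$, proving \eqref{Eisen} for simple $f$.

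For the density step, continuity in $\|\cdot\|_0$ of the maps $f \mapsto \int_0^t f(s,W_s^x)\, dW_s$ and $f \mapsto \int_{T-t}^T f(T-s,\widehat{W}_s^x)\, dW_s^*$ is immediate from It\^o's isometry together with the explicit Gaussian density of $W_s$ and $\widehat{W}_s$, which accounts for the first summand of $\|\cdot\|_0$. I expect the main obstacle to be the drift integral: the factor $\widehat{W}_s/(T-s)$ is not square-integrable as $s \uparrow T$, so a naive Cauchy--Schwarz estimate does not work. Instead, one would bound $E\bigl[|\widehat{W}_s|\cdot|f(T-s,\widehat{W}_s^x)|\bigr]/(T-s)$ directly by integrating against the Gaussian law of $\widehat{W}_s$, which reproduces exactly the weight $|y|\, s^{-1}(2\pi s)^{-1/2} e^{-y^2/(2s)}$ of the second summand of $\|\cdot\|_0$ and explains its somewhat unusual form. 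Combined with the defining limit of the left-hand side in Definition \ref{LocalTime}, this yields \eqref{Eisen} for all $f \in \mathcal{H}^0$.
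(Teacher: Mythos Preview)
The paper does not actually prove this lemma: it is stated with the attribution ``which is due to \cite{Eisenbaum}'' and no argument is given. Your proposal outlines precisely the standard proof from Eisenbaum's original paper --- Tanaka's formula for the forward piece, Pardoux's time-reversal to identify $W^{\ast}$ as a Brownian motion in the backward filtration, a second application of Tanaka to produce the backward terms, and then the density argument in $\mathcal{H}^{0}$ where the two summands of the norm $\left\Vert \cdot \right\Vert_{0}$ are tailored exactly to control the It\^{o} and drift parts respectively. Your remark about why the drift term needs a direct $L^{1}$ estimate rather than Cauchy--Schwarz is correct and is the reason for the specific weight in the second summand of the norm. There is nothing to compare here beyond noting that your sketch is the right one.
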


\bigskip

Later on we will also make use of the following integration by parts
relation with respect to local time-space integrals (see \cite{Eisenbaum}, 
\cite{BMPD}):

\begin{lemma}
\label{IBP}Suppose $f\in \mathcal{H}^{x}$ is Lipschitz continuous with
respect to the spatial variable and denote by $f^{\shortmid }$ its spatial
weak derivative. Then all $0\leq t\leq T$ $X_{t}^{x}$ is Malliavin
differentiable and 
\begin{equation*}
-\int_{0}^{t}\int_{\mathbb{R}}f(s,y)L^{X^{x}}(ds,dy)=\int_{0}^{t}f^{%
\shortmid }(s,X_{s}^{x})ds\text{ a.e.}
\end{equation*}
\end{lemma}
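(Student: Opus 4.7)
The plan is to establish the identity first for smooth $f$ and then extend to Lipschitz $f \in \mathcal{H}^{x}$ by the density of smooth functions in $(\mathcal{H}^{x}, \|\cdot\|_{x})$. The Malliavin differentiability of $X_{t}^{x}$ is already a consequence of Theorem \ref{Ouknine} applied with $\rho_{1}=0$, $\rho_{2}=1$, so the real content lies in the integration-by-parts identity. Observe that under the decomposition $b=\widetilde{b}+\widehat{b}$, the solution $X^{x}$ is a continuous semimartingale with $d\langle X^{x}\rangle_{s}=ds$, and thanks to the linear growth of $b$, Novikov's condition holds, so one may freely reduce to $X^{x}=W^{x}=x+W$ via Girsanov's theorem (the local time is a pathwise object and is preserved under the equivalent change of measure).

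For $f \in C_{c}^{1,1}([0,T]\times \mathbb{R})$, I would introduce the primitive $F(s,y):=\int_{-\infty}^{y}f(s,z)\,dz$, which is $C^{1,2}$. Applying the classical It\^{o} formula to $F(t,X_{t}^{x})$ yields
\begin{equation*}
F(t,X_{t}^{x})-F(0,x)=\int_{0}^{t}\partial_{s}F(s,X_{s}^{x})ds+\int_{0}^{t}f(s,X_{s}^{x})dX_{s}^{x}+\tfrac{1}{2}\int_{0}^{t}f^{\shortmid}(s,X_{s}^{x})ds.
\end{equation*}
On the other hand, the generalized It\^{o}-Tanaka formula expressed through the local time-space integral (in the form going back to \cite{Eisenbaum}, which is precisely the framework underlying Definition \ref{LocalTime}) applied to the same $F$ reads
\begin{equation*}
F(t,X_{t}^{x})-F(0,x)=\int_{0}^{t}\partial_{s}F(s,X_{s}^{x})ds+\int_{0}^{t}f(s,X_{s}^{x})dX_{s}^{x}-\tfrac{1}{2}\int_{0}^{t}\int_{\mathbb{R}}f(s,y)L^{X^{x}}(ds,dy).
\end{equation*}
Subtracting these two expressions for $F(t,X_{t}^{x})-F(0,x)$ yields the claimed identity for smooth $f$. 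To justify the second formula rigorously in the present setting, I would first prove it for $X^{x}=W^{x}$ using Lemma \ref{Eisenbaum} (which is exactly the Eisenbaum decomposition for Brownian motion) together with It\^{o}'s formula applied to $F$, and then transfer to the general $X^{x}$ through the Girsanov reduction mentioned above.

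For the extension to general Lipschitz $f \in \mathcal{H}^{x}$, I would approximate $f$ by a sequence $f_{n}\in C_{c}^{1,1}$ via spatial mollification and time cut-off so that $f_{n}\to f$ in the norm $\|\cdot\|_{x}$, with $f_{n}^{\shortmid}\to f^{\shortmid}$ almost everywhere and $\|f_{n}^{\shortmid}\|_{\infty}\leq \mathrm{Lip}(f)$. The left-hand side converges in probability by the continuity of the local time-space integral with respect to $\|\cdot\|_{x}$, which is the content of the density argument underlying Definition \ref{LocalTime} (cf. Lemma 2.7 in \cite{BMPD}). The right-hand side converges by the bounded convergence theorem applied pathwise, using the uniform bound by $\mathrm{Lip}(f)$.

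The main obstacle will be the identification of the second It\^{o}-Tanaka formula in the form involving the local time-space integral $L^{X^{x}}(ds,dy)$ as in Definition \ref{LocalTime}. The classical It\^{o}-Tanaka formula involves only the spatial local time for fixed times, so one must be careful to verify that the generalized time-space version of Eisenbaum is compatible with our definition; this is where the reduction to Brownian motion through Girsanov (and then through Lemma \ref{Eisenbaum}) pays off, since in the Wiener case the representation in Lemma \ref{Eisenbaum} combined with integration by parts for It\^{o} integrals directly produces the identity for regular $f$, after which the density argument completes the proof.
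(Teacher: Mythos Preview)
The paper does not give a proof of this lemma; it simply cites \cite{Eisenbaum} and \cite{BMPD} and moves on. Your proposal is therefore not competing with an argument in the paper but supplying one that the paper outsources, and the route you describe (classical It\^{o} versus Eisenbaum's It\^{o}--Tanaka on a smooth primitive, then density in $\mathcal{H}^{x}$) is exactly the standard one underlying those references.

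One small correction: you invoke Theorem \ref{Ouknine} with $\rho_{1}=0$, $\rho_{2}=1$ for the Malliavin differentiability of $X_{t}^{x}$, but that theorem is stated for $\rho_{1},\rho_{2}\in\mathbb{R}\setminus\{0\}$. In the present lemma, $X^{x}$ is (per Definition \ref{LocalTime}) the solution driven by $W$ alone, so the Malliavin differentiability is the pure Wiener-noise result of \cite{BMPD} rather than Theorem \ref{Ouknine}. This is a citation issue, not a mathematical gap: the needed result is available, and indeed is what the paper's own citation of \cite{BMPD} points to.
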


Using mollification let us now in view of the next auxiliary result consider
smooth functions $\widehat{b}_{n},n\geq 1$ such that

\begin{equation}
\widehat{b}_{n}(t,x)\longrightarrow \widehat{b}(t,x),\widehat{b}%
_{n}^{\shortmid }(t,x)\longrightarrow \widehat{b}^{\shortmid }(t,x),\text{ }%
(t,x)-\text{a.e.,}  \label{b1}
\end{equation}%
\begin{equation*}
\left\vert \widehat{b}_{n}(t,x)\right\vert \leq C(1+\left\vert x\right\vert
),x\in \mathbb{R},0\leq t\leq T,n\geq 1
\end{equation*}

\bigskip as well as

\begin{equation*}
\left\vert \widehat{b}_{n}(t,x)-\widehat{b}_{n}(t,y)\right\vert \leq
K\left\vert x-y\right\vert ,x,y\in \mathbb{R},0\leq t\leq T,n\geq 1
\end{equation*}%
where $C>0$ and $K$ is the Lipschitz constant of $\widehat{b}$. So%
\begin{equation*}
\left\vert \widehat{b}_{n}^{\shortmid }(t,x)\right\vert \leq K
\end{equation*}%
for all $x\in \mathbb{R},0\leq t\leq T,n\geq 1$. Further, let $\widetilde{b}%
_{n},n\geq 1$ be a sequence of smooth and compactly supported functions such
that 
\begin{subequations}
\begin{equation}
\widetilde{b}_{n}(t,x)\underset{n\longrightarrow \infty }{\longrightarrow }%
\widetilde{b}(t,x)\text{ }(t,x)-\text{a.e.}  \label{b2}
\end{equation}%
and 
\end{subequations}
\begin{equation*}
\left\vert \widetilde{b}_{n}(t,x)\right\vert \leq L
\end{equation*}%
for all $t,x$ for some constant $L<\infty $.

\begin{lemma}
\label{WienerTransform} Let $\widetilde{b}_{n},\widehat{b}_{n},n\geq 1$ be
as in (\ref{b1}), (\ref{b2}) and $b_{n}=\widetilde{b}_{n}+\widehat{b}%
_{n},n\geq 1$. Assume that $X_{\cdot }^{x,n}$ is the strong solution
associated with the vector field $b_{n},n\geq 1$. Further, let $X_{\cdot
}^{x}$ be the weak solution to (\ref{Pardoux}) and $\left\{ \mathcal{F}%
_{t}\right\} _{0\leq t\leq T}$ the ($\mu -$completed) filtration generated
by $B_{t}^{H},W_{t},0\leq t\leq T$. Then, for all $0\leq t\leq T$%
\begin{equation*}
X_{t}^{x,n}\underset{n\longrightarrow \infty }{\longrightarrow }E\left[
X_{t}^{x}\right. \left\vert \mathcal{F}_{t}\right]
\end{equation*}%
as well as%
\begin{equation*}
(X_{t}^{x,n})^{2}\underset{n\longrightarrow \infty }{\longrightarrow }E\left[
(X_{t}^{x})^{2}\right. \left\vert \mathcal{F}_{t}\right]
\end{equation*}%
weakly in $L^{2}(\Omega ,\mathcal{F}_{t})$.
\end{lemma}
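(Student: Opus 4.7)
The plan is to follow the strategy of step 3 in the proof of Theorem \ref{StrongSolution}, with Girsanov's theorem now used to remove only the bounded component $\widetilde{b}_n$ of the drift rather than the whole drift. Let $\widetilde{X}^{x,n}$ denote the unique strong solution of the Lipschitz SDE
$$d\widetilde{X}_t^{x,n} = \widehat{b}_n(t, \widetilde{X}_t^{x,n})\,dt + \rho_1\,dB_t^H + \rho_2\,dW_t, \quad \widetilde{X}_0^{x,n} = x,$$
which, by Gronwall and BDG estimates, satisfies $\sup_n E[\sup_{t \leq T}|\widetilde{X}_t^{x,n}|^p] < \infty$ for every $p \geq 1$ thanks to the uniform Lipschitz and linear-growth bounds on $\widehat{b}_n$. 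The exponential martingale $Z_n := \mathcal{E}(\rho_2^{-1}\int_0^T \widetilde{b}_n(s,\widetilde{X}_s^{x,n})\,dW_s)$ is the density of a probability $P_n \sim \mu$ under which $W_t^{*,n} := W_t - \rho_2^{-1}\int_0^t \widetilde{b}_n(s,\widetilde{X}_s^{x,n})\,ds$ is a Brownian motion independent of $B^H$, so that $\widetilde{X}^{x,n}$ is a weak solution of (\ref{Pardoux}) with the smooth drift $b_n$ under $P_n$. Novikov's condition, uniform in $n$ since $|\widetilde{b}_n| \leq L$, gives $\sup_n \|Z_n\|_{L^q(\mu)} < \infty$ for every $q \geq 1$, and strong uniqueness for smooth $b_n$ shows that the $\mu$-law of $(X^{x,n},B^H,W)$ coincides with the $P_n$-law of $(\widetilde{X}^{x,n},B^H,W^{*,n})$. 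H\"older's inequality then yields $\sup_n E_\mu[|X_t^{x,n}|^p] < \infty$ for every $p \geq 1$.

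Consequently $(X_t^{x,n})_{n \geq 1}$ and $((X_t^{x,n})^2)_{n \geq 1}$ are bounded in $L^2(\Omega,\mathcal{F}_t)$, so by reflexivity some subsequences converge weakly to $Y_t, Z_t \in L^2(\Omega, \mathcal{F}_t)$. For any bounded $\mathcal{F}_t$-measurable $\xi = \Psi(B^H_{\cdot \wedge t}, W_{\cdot \wedge t})$ the law-equality above rewrites as the Girsanov identity
$$E_\mu[X_t^{x,n}\,\xi] = E_\mu\bigl[\widetilde{X}_t^{x,n}\,\Psi(B^H_{\cdot \wedge t}, W^{*,n}_{\cdot \wedge t})\,Z_n\bigr].$$
Standard stability of Lipschitz SDE solutions under (\ref{b1}) gives $\widetilde{X}^{x,n} \to \widetilde{X}^x$ in $L^p$, with $\widetilde{X}^x$ the solution of the Lipschitz SDE with $\widehat{b}$ in place of $\widehat{b}_n$; hence $W^{*,n} \to W^* := W - \rho_2^{-1}\int_0^\cdot \widetilde{b}(s,\widetilde{X}_s^x)\,ds$ uniformly in probability and $Z_n \to Z := \mathcal{E}(\rho_2^{-1}\int_0^T \widetilde{b}(s,\widetilde{X}_s^x)\,dW_s)$ in $L^q(\mu)$. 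The analogous Girsanov representation for the weak solution $X^x$ of (\ref{Pardoux}) (obtained from $\widetilde{X}^x$ via the change of measure with density $Z$) gives $E_\mu[X_t^x\,\xi] = E_\mu[\widetilde{X}_t^x\,\Psi(B^H_{\cdot \wedge t}, W^*_{\cdot \wedge t})\,Z]$, so passing to the limit forces $E_\mu[Y_t\,\xi] = E_\mu[X_t^x\,\xi]$. Since $\xi$ is arbitrary and every subsequence has a further weakly convergent subsequence with the same limit, the whole sequence $X_t^{x,n}$ converges weakly in $L^2(\Omega, \mathcal{F}_t)$ to $E_\mu[X_t^x | \mathcal{F}_t]$. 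The assertion for $(X_t^{x,n})^2$ follows from the same computation with $\widetilde{X}_t^{x,n}$ replaced by $(\widetilde{X}_t^{x,n})^2$, using the uniform $L^4$-bound established above.

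The main obstacle is the passage $\widetilde{b}_n(s,\widetilde{X}_s^{x,n}) \to \widetilde{b}(s,\widetilde{X}_s^x)$ in $L^2(\mu \times ds)$, since $\widetilde{b}$ is only bounded and Borel-measurable so continuity-based reasoning is not available. I would handle this by exploiting absolute continuity of the law of $\widetilde{X}_s^x$ with respect to Lebesgue measure for each $s > 0$ (which follows from the nondegenerate Wiener component $\rho_2 W_s$, e.g.\ via Malliavin-calculus non-degeneracy or a Girsanov reduction of the Lipschitz SDE to the Gaussian law of $x + \rho_1 B_s^H + \rho_2 W_s$), so that the a.e.\ convergence in (\ref{b2}) transfers to $\mu$-a.s.\ convergence along the solution process, and then invoking the uniform $L^\infty$-bound $L$ on $\widetilde{b}_n$ to apply dominated convergence.
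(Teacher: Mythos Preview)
Your Girsanov-based identification of the weak limit is exactly the strategy of Lemma~A.3 in \cite{BMPD}, which is all the paper invokes here, so the approaches coincide. One tactical difference is worth recording: the paper (and \cite{BMPD}, cf.\ also the use of $\mathcal{E}(b_n)_T$ in the proof of Theorem~\ref{Ouknine}) removes the \emph{entire} drift $b_n$ via Girsanov with respect to $W$, reducing everything to the fixed Gaussian path $x+\rho_1 B^{H}+\rho_2 W$, so that only the Radon--Nikodym density varies with $n$; you instead peel off only the bounded piece $\widetilde b_n$ and carry an $n$-dependent Lipschitz SDE alongside. Both routes work, but your choice is precisely what produces the obstacle you flag in the last paragraph, namely the passage from a.e.\ convergence of $\widetilde b_n$ to convergence of $\widetilde b_n(s,\widetilde X_s^{x,n})$ along an $n$-dependent path with a merely bounded, discontinuous limit $\widetilde b$. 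Under full-drift removal the corresponding integrand $b_n(s,x+\rho_1 B_s^{H}+\rho_2 W_s)$ is evaluated at a fixed Gaussian process, and convergence follows immediately from (\ref{b1}), (\ref{b2}), absolute continuity of the Gaussian law, and dominated convergence. If the absolute-continuity argument for $\widetilde X^{x,n}$ becomes awkward, switching to the full-removal variant cleans it up at the price of the linear-growth density bound (\ref{DIneq}), which the paper establishes anyway.
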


\begin{proof}
The proof is the same as that of Lemma A.3 in \cite{BMPD}.
\end{proof}

\begin{proof}[Proof of Theorem \protect\ref{Ouknine}]
Assume for notational convenience that $\rho _{1}$\bigskip $=\rho _{2}=1$.
Let $\widehat{b}_{n},\widetilde{b}_{n},n\geq 1$ the smooth approximating
sequences of functions in (\ref{b1}) and (\ref{b2}).

In proving this result, we aim at applying as in Section 2 the compactness
criterion in \cite{DMN} to the sequence $X_{t}^{x,n},n\geq 1$ for each fixed 
$t$. Just as in \cite{BMPD} we can show for $b_{n}:=\widetilde{b}_{n}+%
\widehat{b}_{n}$ that 
\begin{eqnarray}
&&D_{s}^{W}X_{t}^{x,n}  \label{DW} \\
&=&\exp \{\int_{s}^{t}\widetilde{b}_{n}^{\prime }(u,X_{u}^{x,n})+\widehat{b}%
_{n}^{\prime }(u,X_{u}^{x,n})du\}\text{ }\mu -a.e,s\leq t\text{ }ds-a.e, 
\notag
\end{eqnarray}%
where $\widehat{b}^{\prime }$ is the weak spatial derivative. Further, by
using Girsanov's theorem with respect $W_{\cdot }$, the mean value theorem
and H\"{o}lder's inequality we obtain similarly to the proof of Theorem A.4
in \cite{BMPD} that for $0\leq s\leq s^{\prime }\leq t$ and $x\in K\subset 
\mathbb{R}$ ($K$ compact)%
\begin{eqnarray*}
&&\left\Vert D_{s}^{W}X_{t}^{x,n}-D_{s^{\prime }}^{W}X_{t}^{x,n}\right\Vert
_{L^{2}(\mu )}^{2} \\
&\leq &E[\exp \{2\frac{1+\varepsilon }{\varepsilon }\int_{s^{\prime
}}^{t}b_{n}^{\prime }(u,x+B_{u}^{H}+W_{u})du\} \\
&&\times \sup_{0\leq \theta \leq 1}\exp \{2\frac{1+\varepsilon }{\varepsilon 
}\theta \int_{s}^{s^{\prime }}b_{n}^{\prime }(u,x+B_{u}^{H}+W_{u})du\} \\
&&\times \left\vert \int_{s}^{s^{\prime }}b_{n}^{\prime
}(u,x+B_{u}^{H}+W_{u})du\right\vert ^{2\frac{1+\varepsilon }{\varepsilon }%
}]^{\frac{1+\varepsilon }{\varepsilon }}E[\mathcal{E}(b_{n})_{T}^{1+%
\varepsilon }]^{\frac{1}{1+\varepsilon }},
\end{eqnarray*}%
where%
\begin{eqnarray*}
&&\mathcal{E}(b)_{T} \\
&:&=\exp (\int_{0}^{T}b(u,x+B_{u}^{H}+W_{u})dW_{u}-\frac{1}{2}%
\int_{0}^{T}(b(u,x+B_{u}^{H}+W_{u}))^{2}du)
\end{eqnarray*}%
and where $\varepsilon $ is chosen such that 
\begin{equation*}
\sup_{x\in K}\sup_{n\geq 1}E[\mathcal{E}(b_{n})_{T}^{1+\varepsilon }]<\infty 
\text{.}
\end{equation*}%
Using the latter estimate combined with the same arguments as in Theorem A.4
in \cite{BMPD}, we find that%
\begin{eqnarray*}
&&\left\Vert D_{s}^{W}X_{t}^{x,n}-D_{s^{\prime }}^{W}X_{t}^{x,n}\right\Vert
_{L^{2}(\mu )}^{2} \\
&\leq &C(\left\Vert \widehat{b}_{n}^{\shortmid }\right\Vert _{\infty
}^{2}T\left\vert s-s^{\prime }\right\vert +\left\Vert \widetilde{b}%
_{n}\right\Vert _{\infty }^{2}\left\vert s-s^{\prime }\right\vert \\
&&+\left\Vert \widetilde{b}_{n}\right\Vert _{\infty }^{2}\left(
\int_{T-s^{\prime }}^{T-s}\left\Vert \frac{B_{T-u}}{T-u}\right\Vert _{L^{2(4%
\frac{1+\varepsilon }{\varepsilon })}(\mu )}du\right) ^{2}) \\
&\leq &C(\left\Vert \widehat{b}_{n}^{\shortmid }\right\Vert _{\infty
}^{2}T\left\vert s-s^{\prime }\right\vert +\left\Vert \widetilde{b}%
_{n}\right\Vert _{\infty }^{2}\left\vert s-s^{\prime }\right\vert +%
\widetilde{C}\left\Vert \widetilde{b}_{n}\right\Vert _{\infty
}^{2}\left\vert s-s^{\prime }\right\vert ).
\end{eqnarray*}%
So%
\begin{equation}
\left\Vert D_{s}^{W}X_{t}^{x,n}-D_{s^{\prime }}^{W}X_{t}^{x,n}\right\Vert
_{L^{2}(\mu )}^{2}\leq C\left\vert s-s^{\prime }\right\vert  \label{Meq}
\end{equation}%
for all $0\leq s\leq s^{\prime }\leq t$, where $C$ is a constant depending
on $H,T,\varepsilon $ and $b$. Therefore, there exists a $\beta \in (0,\frac{%
1}{2})$ and a constant $C<\infty $ depending on $H,T$,$\varepsilon $ and $b$
such that for all $0\leq s\leq s^{\prime }\leq t$: 
\begin{equation}
\sup_{x\in K}\sup_{n\geq 1}\int_{0}^{T}\int_{0}^{T}\frac{\left\Vert
D_{s}^{W}X_{t}^{x,n}-D_{s^{\prime }}^{W}X_{t}^{x,n}\right\Vert _{L^{2}(\mu
)}^{2}}{\left\vert s^{\prime }-s\right\vert ^{1+\alpha }}dsds^{\prime }\leq C%
\text{.}  \label{DW1}
\end{equation}%
In the same way, we can also verify that%
\begin{equation}
\sup_{x\in K}\sup_{n\geq 1}\left\Vert D_{\cdot }^{W}X_{t}^{x,n}\right\Vert
_{L^{2}([0,t]\times \Omega )}^{2}\leq C  \label{DW2}
\end{equation}%
for a constant $C<\infty $.

In order to employ the compactness criterion in \cite{DMN}, we next also
have to prove the estimates (\ref{DW2}), (\ref{DW1}) for the Malliavin
derivative $D^{B}$ in the direction of the Wiener process $B_{\cdot }$ in
the stochastic integral representation of $B_{\cdot }^{H}$. To this end, we
can use the transfer principle for Malliavin derivatives of Proposition
5.2.1 in \cite{Nualart} and get the following representation%
\begin{eqnarray*}
&&D_{s}^{B}X_{t}^{x,n} \\
&=&K_{H}(t,s)D_{s}^{H}X_{t}^{x,n} \\
&&+c_{H}(H-\frac{1}{2}%
)\int_{s}^{T}(D_{u}^{H}X_{t}^{x,n}-D_{s}^{H}X_{t}^{x,n})\left( \frac{u}{s}%
\right) ^{H-\frac{1}{2}}\frac{1}{(u-s)^{\frac{3}{2}-H}}du
\end{eqnarray*}%
$\mu -a.e,s\in \lbrack 0,t]$ $a.e.$ Note here that%
\begin{equation}
\frac{\partial }{\partial t}K(t,s)=c_{H}(H-\frac{1}{2})\left( \frac{t}{s}%
\right) ^{H-\frac{1}{2}}\frac{1}{(t-s)^{\frac{3}{2}-H}},t>s.  \label{K}
\end{equation}%
So%
\begin{eqnarray*}
&&\left\Vert D_{s}^{B}X_{t}^{x,n}\right\Vert _{L^{2}(\mu )} \\
&\leq &K_{H}(t,s)\left\Vert D_{s}^{H}X_{t}^{x,n}\right\Vert _{L^{2}(\mu )} \\
&&+c_{H}(\frac{1}{2}-H)\int_{s}^{T}\left\Vert
D_{u}^{H}X_{t}^{x,n}-D_{s}^{H}X_{t}^{x,n}\right\Vert _{L^{2}(\mu )}\left( 
\frac{u}{s}\right) ^{H-\frac{1}{2}}\frac{1}{(u-s)^{\frac{3}{2}-H}}du.
\end{eqnarray*}%
On the other hand, by using the chain rule for the Malliavin derivative we
obtain just as in (\ref{DW}) the representation 
\begin{eqnarray*}
&&D_{s}^{H}X_{t}^{x,n} \\
&=&\exp \{\int_{s}^{t}\widetilde{b}_{n}^{\prime }(u,X_{u}^{x,n})+\widehat{b}%
_{n}^{\prime }(u,X_{u}^{x,n})du\}\text{ }\mu -a.e,s\leq t\text{ }ds-a.e.
\end{eqnarray*}

So as in the case of $D_{\cdot }^{W}$ we can get the estimate%
\begin{equation}
\left\Vert D_{s}^{H}X_{t}^{x,n}-D_{s^{\prime }}^{H}X_{t}^{x,n}\right\Vert
_{L^{2}(\mu )}^{2}\leq C\left\vert s-s^{\prime }\right\vert
\end{equation}%
for all $0\leq s\leq s^{\prime }\leq t$, where $C$ is a constant depending
on $H,T,\varepsilon $ and $b$.

Thus for all compact sets $K\subset \mathbb{R}$ there is a constant $C$
depending on $T,K,\varepsilon $ and $b$ such that%
\begin{equation}
\sup_{x\in K}\sup_{0\leq u\leq t}\sup_{n\geq 1}\left\Vert
D_{u}^{H}X_{t}^{x,n}\right\Vert _{L^{2}(\mu )}\leq C  \label{Est1}
\end{equation}%
as well as%
\begin{equation}
\sup_{x\in K}\sup_{n\geq 1}\left\Vert
D_{u}^{H}X_{t}^{x,n}-D_{s}^{H}X_{t}^{x,n}\right\Vert _{L^{2}(\mu )}\leq
C\left\vert u-s\right\vert ^{\frac{1}{2}}  \label{Est2}
\end{equation}%
for all $0\leq s\leq u\leq t$. Hence,%
\begin{eqnarray*}
&&\left\Vert D_{s}^{B}X_{t}^{x,n}\right\Vert _{L^{2}(\mu )} \\
&\leq &C(K_{H}(t,s)+c_{H}(\frac{1}{2}-H)\int_{s}^{T}\left( \frac{u}{s}%
\right) ^{H-\frac{1}{2}}\frac{1}{(u-s)^{1-H}}du) \\
&\leq &C(K_{H}(t,s)+c_{H}(\frac{1}{2}-H)\int_{s}^{T}\left( \frac{s}{s}%
\right) ^{H-\frac{1}{2}}\frac{1}{(u-s)^{1-H}}du) \\
&=&C(K_{H}(t,s)+c_{H}(\frac{1}{2}-H)\frac{1}{H}(T-s)^{H}).
\end{eqnarray*}%
The latter entails that%
\begin{equation*}
\sup_{x\in K}\sup_{n\geq 1}\left\Vert D_{\cdot }^{B}X_{t}^{x,n}\right\Vert
_{L^{2}(\mu \times \left[ 0,T\right] )}\leq C^{\ast }
\end{equation*}%
for a constant $C^{\ast }=C^{\ast }(H,T,\varepsilon ,b)>0$. Further, we get
for $s_{2}\geq s_{1}$ that

\begin{eqnarray*}
&&D_{s_{2}}^{B}X_{t}^{x,n}-D_{s_{1}}^{B}X_{t}^{x,n} \\
&=&K_{H}(t,s_{2})D_{s_{2}}^{H}X_{t}^{x,n}-K_{H}(t,s_{1})D_{s_{1}}^{H}X_{t}^{x,n}
\\
&&+c_{H}(H-\frac{1}{2}%
)\int_{s_{2}}^{T}(D_{u}^{H}X_{t}^{x,n}-D_{s_{2}}^{H}X_{t}^{x,n})\left( \frac{%
u}{s_{2}}\right) ^{H-\frac{1}{2}}\frac{1}{(u-s_{2})^{\frac{3}{2}-H}}du \\
&&-c_{H}(H-\frac{1}{2}%
)\int_{s_{1}}^{T}(D_{u}^{H}X_{t}^{x,n}-D_{s_{1}}^{H}X_{t}^{x,n})\left( \frac{%
u}{s_{1}}\right) ^{H-\frac{1}{2}}\frac{1}{(u-s_{1})^{\frac{3}{2}-H}}du \\
&=&K_{H}(t,s_{2})(D_{s_{2}}^{H}X_{t}^{x,n}-D_{s_{1}}^{H}X_{t}^{x,n})+(K_{H}(t,s_{2})-K_{H}(t,s_{1}))D_{s_{1}}^{H}X_{t}^{x,n}
\\
&&+\int_{s_{2}}^{T}(D_{u}^{H}X_{t}^{x,n}-D_{s_{2}}^{H}X_{t}^{x,n})(\frac{%
\partial }{\partial u}K(u,s_{2})-\frac{\partial }{\partial u}K(u,s_{1}))du \\
&&-c_{H}(H-\frac{1}{2}%
)\int_{s_{2}}^{T}(D_{s_{2}}^{H}X_{t}^{x,n}-D_{s_{1}}^{H}X_{t}^{x,n})\left( 
\frac{u}{s_{1}}\right) ^{H-\frac{1}{2}}\frac{1}{(u-s_{1})^{\frac{3}{2}-H}}du
\\
&&-c_{H}(H-\frac{1}{2}%
)\int_{s_{1}}^{s_{2}}(D_{u}^{H}X_{t}^{x,n}-D_{s_{1}}^{H}X_{t}^{x,n})\left( 
\frac{u}{s_{1}}\right) ^{H-\frac{1}{2}}\frac{1}{(u-s_{1})^{\frac{3}{2}-H}}du
\\
&=&\sum_{j=1}^{5}I_{j}(s_{1},s_{2}).
\end{eqnarray*}

Let us first have a look at the most difficult term, that is%
\begin{equation*}
I_{3}(s_{1},s_{2})=%
\int_{s_{2}}^{T}(D_{u}^{H}X_{t}^{x,n}-D_{s_{2}}^{H}X_{t}^{x,n})\frac{%
\partial }{\partial u}(K_{H}(u,s_{2})-K_{H}(u,s_{1}))du
\end{equation*}%
In what follows, we aim at using the following inequality (see the proof of
Lemma A.4 in \cite{BNP}):

For all $\gamma \in (0,H),0<\theta _{1}<\theta _{2}<T$ we have that

\begin{equation}
\left\vert K_{H}(t,\theta _{1})-K_{H}(t,\theta _{2})\right\vert \leq C_{H,T}%
\frac{(\theta _{2}-\theta _{1})^{\gamma }}{(\theta _{2}\theta _{1})^{\gamma }%
}\theta _{2}^{H-\frac{1}{2}-\gamma }(t-\theta _{2})^{H-\frac{1}{2}-\gamma }.
\label{Keq}
\end{equation}%
It is shown in \cite{BNP} (Proof of Lemma A.4)) that there exists a $\beta
>0 $ depending on $H$ and $\gamma $ such that 
\begin{equation}
\int_{0}^{t}\int_{0}^{t}\left( \frac{\left\vert \theta _{2}-\theta
_{1}\right\vert ^{2\gamma }}{(\theta _{2}\theta _{1})^{2\gamma }}\theta
_{2}^{2H-1-2\gamma }(t-\theta _{2})^{2H-1-2\gamma }\right) \left\vert \theta
_{2}-\theta _{1}\right\vert ^{-1+\beta }d\theta _{1}d\theta _{2}<\infty
\label{FracKeq}
\end{equation}

Since 
\begin{equation*}
\left\vert K(t,s)\right\vert \leq C(t-s)^{H-\frac{1}{2}},t>s
\end{equation*}%
for a constant $C$ and since $D_{u}^{H}X_{t}^{x,n}$ is Lipschitz continuous
in $u$, we obtain from the definition of $K_{H}$, integration by parts and
inequality (\ref{Keq}) that%
\begin{eqnarray*}
&&\left\Vert I_{3}(s_{1},s_{2})\right\Vert _{L^{2}(\mu )} \\
&\leq &c_{H}(\frac{1}{2}-H)\int_{s_{2}}^{T}\left\Vert
D_{u}^{H}X_{t}^{x,n}-D_{s_{2}}^{H}X_{t}^{x,n}\right\Vert _{L^{2}(\mu
)}(\left( \frac{s_{2}}{u}\right) ^{\frac{1}{2}-H}\frac{1}{(u-s_{2})^{\frac{3%
}{2}-H}}-\left( \frac{s_{1}}{u}\right) ^{\frac{1}{2}-H}\frac{1}{(u-s_{1})^{%
\frac{3}{2}-H}})du \\
&\leq &c_{H}(\frac{1}{2}-H)\int_{s_{2}}^{T}\left\Vert
D_{u}^{H}X_{t}^{x,n}-D_{s_{2}}^{H}X_{t}^{x,n}\right\Vert _{L^{2}(\mu )}\frac{%
1}{u^{\frac{1}{2}-H}}(s_{2}^{\frac{1}{2}-H}\frac{1}{(u-s_{2})^{\frac{3}{2}-H}%
}-s_{1}^{\frac{1}{2}-H}\frac{1}{(u-s_{1})^{\frac{3}{2}-H}})du \\
&\leq &c_{H}(\frac{1}{2}-H)\int_{s_{2}}^{T}C(u-s_{2})^{\frac{1}{2}}\frac{1}{%
u^{\frac{1}{2}-H}}(s_{2}^{\frac{1}{2}-H}\frac{1}{(u-s_{2})^{\frac{3}{2}-H}}%
-s_{1}^{\frac{1}{2}-H}\frac{1}{(u-s_{1})^{\frac{3}{2}-H}})du \\
&=&\lim_{s\searrow s_{2}}\left. C(u-s_{2})^{\frac{1}{2}%
}(K(u,s_{2})-K(u,s_{1}))\right\vert _{u=s}^{T}-\int_{s_{2}}^{T}\frac{C}{2}%
(u-s_{2})^{-\frac{1}{2}}(K(u,s_{2})-K(u,s_{1}))du \\
&=&C(T-s_{2})^{\frac{1}{2}}(K(T,s_{2})-K(T,s_{1}))--\int_{s_{2}}^{T}\frac{C}{%
2}(u-s_{2})^{-\frac{1}{2}}(K(u,s_{2})-K(u,s_{1}))du \\
&\leq &C(T-s_{2})^{\frac{1}{2}}C_{H,T}\frac{(s_{2}-s_{1})^{\gamma }}{%
(s_{2}s_{1})^{\gamma }}s_{2}^{H-\frac{1}{2}-\gamma }(T-s_{2})^{H-\frac{1}{2}%
-\gamma } \\
&&+\int_{s_{2}}^{T}\frac{C}{2}(u-s_{2})^{-\frac{1}{2}}C_{H,T}\frac{%
(s_{2}-s_{1})^{\gamma }}{(s_{2}s_{1})^{\gamma }}s_{2}^{H-\frac{1}{2}-\gamma
}(u-s_{2})^{H-\frac{1}{2}-\gamma }du \\
&=&C(T-s_{2})^{\frac{1}{2}}C_{H,T}\frac{(s_{2}-s_{1})^{\gamma }}{%
(s_{2}s_{1})^{\gamma }}s_{2}^{H-\frac{1}{2}-\gamma }(T-s_{2})^{H-\frac{1}{2}%
-\gamma } \\
&&+\int_{s_{2}}^{T}\frac{C}{2}C_{H,T}\frac{(s_{2}-s_{1})^{\gamma }}{%
(s_{2}s_{1})^{\gamma }}s_{2}^{H-\frac{1}{2}-\gamma }(u-s_{2})^{H-1-\gamma }du
\\
&=&CC_{H,T}\frac{(s_{2}-s_{1})^{\gamma }}{(s_{2}s_{1})^{\gamma }}s_{2}^{H-%
\frac{1}{2}-\gamma }(T-s_{2})^{H-\gamma } \\
&&+\frac{C}{2}C_{H,T}\frac{(s_{2}-s_{1})^{\gamma }}{(s_{2}s_{1})^{\gamma }}%
s_{2}^{H-\frac{1}{2}-\gamma }\frac{1}{H-1-\gamma }(T-s_{2})^{H-\gamma }
\end{eqnarray*}

Hence, we conclude from (\ref{FracKeq}) that there exists a $\beta >0$
depending on $H$ and $\gamma $ such that%
\begin{equation*}
\int_{0}^{T}\int_{0}^{T}\frac{\left\Vert I_{3}(s_{1},s_{2})\right\Vert
_{L^{2}(\mu )}^{2}}{\left\vert s_{2}-s_{1}\right\vert ^{1+2\beta }}%
ds_{1}ds_{2}<\infty \text{.}
\end{equation*}%
Further, we see that%
\begin{eqnarray*}
&&\left\Vert I_{4}(s_{1},s_{2})\right\Vert _{L^{2}(\mu )} \\
&\leq &c_{H}(\frac{1}{2}-H)\int_{s_{2}}^{T}\left\Vert
D_{s_{2}}^{H}X_{t}^{x,n}-D_{s_{1}}^{H}X_{t}^{x,n}\right\Vert _{L^{2}(\mu
)}\left( \frac{u}{s_{1}}\right) ^{H-\frac{1}{2}}\frac{1}{(u-s_{1})^{\frac{3}{%
2}-H}}du \\
&\leq &c_{H}(\frac{1}{2}-H)C(s_{2}-s_{1})^{\frac{1}{2}}\int_{s_{2}}^{T}\frac{%
1}{(u-s_{1})^{\frac{3}{2}-H}}du \\
&=&c_{H}(\frac{1}{2}-H)C(s_{2}-s_{1})^{\frac{1}{2}}\frac{1}{H-3/2}(\frac{1}{%
(T-s_{1})^{\frac{1}{2}-H}}-\frac{1}{(s_{2}-s_{1})^{\frac{1}{2}-H}}) \\
&=&c_{H}(\frac{1}{2}-H)C\frac{1}{H-3/2}((s_{2}-s_{1})^{\frac{1}{2}}\frac{1}{%
(T-s_{1})^{\frac{1}{2}-H}}+(s_{2}-s_{1})^{H}) \\
&\leq &2c_{H}(\frac{1}{2}-H)C\frac{1}{H-3/2}(s_{2}-s_{1})^{H}.
\end{eqnarray*}%
So%
\begin{equation*}
\int_{0}^{T}\int_{0}^{T}\frac{\left\Vert I_{4}(s_{1},s_{2})\right\Vert
_{L^{2}(\mu )}^{2}}{\left\vert s_{2}-s_{1}\right\vert ^{1+2\beta }}%
ds_{1}ds_{2}<\infty
\end{equation*}%
for a $\beta >0$ depending on $H$.

In addition, it also follows that%
\begin{eqnarray*}
&&\left\Vert I_{5}(s_{1},s_{2})\right\Vert _{L^{2}(\mu )} \\
&\leq &c_{H}(\frac{1}{2}-H)\int_{s_{1}}^{s_{2}}\left\Vert
D_{u}^{H}X_{t}^{x,n}-D_{s_{1}}^{H}X_{t}^{x,n}\right\Vert _{L^{2}(\mu
)}\left( \frac{u}{s_{1}}\right) ^{H-\frac{1}{2}}\frac{1}{(u-s_{1})^{\frac{3}{%
2}-H}}du \\
&\leq &c_{H}(\frac{1}{2}-H)C\int_{s_{1}}^{s_{2}}(u-s_{1})^{\frac{1}{2}}\frac{%
1}{(u-s_{1})^{\frac{3}{2}-H}}du \\
&=&c_{H}(\frac{1}{2}-H)C\frac{1}{1-H}(s_{2}-s_{1})^{H}.
\end{eqnarray*}%
Hence,%
\begin{equation*}
\int_{0}^{T}\int_{0}^{T}\frac{\left\Vert I_{5}(s_{1},s_{2})\right\Vert
_{L^{2}(\mu )}^{2}}{\left\vert s_{2}-s_{1}\right\vert ^{1+2\beta }}%
ds_{1}ds_{2}<\infty
\end{equation*}%
for a $\beta >0$ depending on $H$.

By using (\ref{Est1}), (\ref{Est2}), (\ref{Keq}) and (\ref{FracKeq}) we can
treat the terms $I_{1}(s_{1},s_{2})$ and $I_{2}(s_{1},s_{2})$ similarly and
we get altogether for compact sets $K\subset \mathbb{R}$ that 
\begin{equation*}
\sup_{x\in K}\sup_{n\geq 1}\int_{0}^{T}\int_{0}^{T}\frac{\left\Vert
D_{s}^{B}X_{t}^{x,n}-D_{s^{\prime }}^{B}X_{t}^{x,n}\right\Vert _{L^{2}(\mu
)}^{2}}{\left\vert s^{\prime }-s\right\vert ^{1+\alpha }}dsds^{\prime }\leq
C<\infty \text{.}
\end{equation*}%
Using the above estimates with respect to $D_{\cdot }^{B}$ and $D_{\cdot
}^{W}$ we can now apply the compactness criterion in \cite{DMN} and obtain
that for all $0\leq t\leq T$ there exists a subsequence $n_{k},k\geq 1$
depending on $t$ and $Y_{t}^{x}\in L^{2}(\Omega ,\mathcal{F}_{t})$%
\begin{equation*}
X_{t}^{x,n_{k}}\underset{k\longrightarrow \infty }{\longrightarrow }Y_{t}^{x}
\end{equation*}%
in $L^{2}(\Omega ,\mathcal{F}_{t})$. However, it follows from Lemma \ref%
{WienerTransform} that $Y_{t}^{x}=E\left[ X_{t}^{x}\right. \left\vert 
\mathcal{F}_{t}\right] $ a.e. Hence,%
\begin{equation}
X_{t}^{x,n}\underset{n\longrightarrow \infty }{\longrightarrow }E\left[
X_{t}^{x}\right. \left\vert \mathcal{F}_{t}\right]   \label{Lto}
\end{equation}%
in $L^{2}(\Omega ,\mathcal{F}_{t})$. The latter implies in connection with
Lemma \ref{WienerTransform} that%
\begin{equation*}
(X_{t}^{x})^{2}=E\left[ (X_{t}^{x})^{2}\right. \left\vert \mathcal{F}_{t}%
\right] \text{ a.e.}
\end{equation*}%
So $X_{t}^{x}=E\left[ X_{t}^{x}\right. \left\vert \mathcal{F}_{t}\right] $
a.e., which shows that the weak solution must be adapted to the filtration $%
\left\{ \mathcal{F}_{t}\right\} _{0\leq t\leq T}$ and therefore a strong
solution. Strong uniqueness is a consequence of Girsanov%
%TCIMACRO{\U{b4}}%
%BeginExpansion
\'{}%
%EndExpansion
s theorem. Further, the Malliavin differentiability of $X_{t}^{x}$ in the
direction of $(B_{\cdot },W_{\cdot })^{\ast }$, buyt also $(B_{\cdot
}^{H},W_{\cdot })^{\ast }$ follows from Lemma 1.2.3 in \cite{Nualart} in
connection with (\ref{Lto}) and the uniform estimates with respec to the
Malliavin derivatives of $X_{t}^{x,n}$.
\end{proof}

\bigskip

\begin{lemma}
Recall that $\Omega =\Omega _{1}\times \Omega _{2}$, where $B_{\cdot }^{H}$
is defined on $\Omega _{2}$ and $W_{\cdot }$ on $\Omega _{2}$. Let $X_{\cdot
}^{x}$ be the Malliavin differentiable solution of Theorem \ref{Ouknine}.
Then for all $0\leq t\leq T:$ 
\begin{eqnarray*}
&&D_{s}^{W}X_{t} \\
&=&\rho _{2}\exp \{-\int_{s}^{t}\int_{\mathbb{R}}f(r,y)L^{\rho
_{2}^{-1}(X^{x}-x-\rho _{1}B^{H}(\omega _{2}))}(\omega _{1},dr,dy)\}\text{ }%
\omega _{2}-a.e,\omega _{1}-a.e,s\leq t\text{ }ds-a.e
\end{eqnarray*}%
as well as%
\begin{eqnarray*}
&&D_{s}^{B}X_{t} \\
&=&\rho _{1}K_{H}(t,s)\exp \left\{ -\int_{s}^{t}\int_{\mathbb{R}%
}f(r,y)L^{\rho _{2}^{-1}(X^{x}-x-\rho _{1}B^{H}(\omega _{2}))}(\omega
_{1},dr,dy)\right\}  \\
&&+\rho _{1}c_{H}(\frac{1}{2}-H)\int_{s}^{T}(\exp \left\{ -\int_{u}^{t}\int_{%
\mathbb{R}}f(r,y)L^{\rho _{2}^{-1}(X^{x}-x-\rho _{1}B^{H}(\omega
_{2}))}(\omega _{1},dr,dy)\right\}  \\
&&\exp \left\{ -\int_{s}^{t}\int_{\mathbb{R}}f(r,y)L^{\rho
_{2}^{-1}(X^{x}-x-\rho _{1}B^{H}(\omega _{2}))}(\omega _{1},dr,dy)\right\}
)\left( \frac{u}{s}\right) ^{H-\frac{1}{2}}\frac{1}{(u-s)^{\frac{3}{2}-H}}du,
\end{eqnarray*}%
$\omega _{2}-a.e,\omega _{1}-a.e,s\leq t$ $ds-a.e$, where%
\begin{equation*}
f(s,y)=\rho _{2}^{-1}b(s,x+\rho _{1}B_{s}^{H}(\omega _{1})+\rho _{2}y).
\end{equation*}
\end{lemma}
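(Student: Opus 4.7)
The plan is to follow the route of \cite{BMPD}: prove the formula first for the smooth approximations $b_n=\widetilde b_n+\widehat b_n$ of (\ref{b1})--(\ref{b2}), for which the classical Malliavin chain rule is available, rewrite the resulting exponent as a local time--space integral via Lemma \ref{IBP}, and finally pass to the limit using the convergence $X^{x,n}_t\to X^x_t$ already established in (\ref{Lto}) together with the uniform Malliavin estimates derived in the proof of Theorem \ref{Ouknine}.

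For the smooth step I would condition on $\omega_2$ and reduce to a standard Wiener SDE. Setting $Y^n_r:=\rho_2^{-1}(X^{x,n}_r-x-\rho_1 B^H_r(\omega_2))$, one obtains
\begin{equation*}
Y^n_t=\int_0^t f_n(r,Y^n_r)\,dr+W_t,\qquad f_n(r,y):=\rho_2^{-1}b_n(r,x+\rho_1 B^H_r(\omega_2)+\rho_2 y).
\end{equation*}
The chain rule for the Malliavin derivatives $D^W$ and $D^H$ then leads to linear ODEs in $u$ whose solutions are
\begin{equation*}
D^W_s X^{x,n}_t=\rho_2\exp\Bigl\{\int_s^t b_n'(u,X^{x,n}_u)\,du\Bigr\},\qquad D^H_s X^{x,n}_t=\rho_1\exp\Bigl\{\int_s^t b_n'(u,X^{x,n}_u)\,du\Bigr\}.
\end{equation*}
Because $f_n'(u,Y^n_u)=b_n'(u,X^{x,n}_u)$, Lemma \ref{IBP} applied to $Y^n$ on a slice of fixed $\omega_2$ rewrites the exponent as $-\int_s^t\int_{\mathbb{R}} f_n(r,y)\,L^{Y^n}(\omega_1,dr,dy)$, which produces exactly the structure stated in the lemma. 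The Malliavin derivative in the direction of the underlying Wiener process $B$ of $B^H$ is then extracted from $D^H X^{x,n}$ by the transfer principle (Proposition 5.2.1 in \cite{Nualart}) combined with identity (\ref{K}) for $\partial_t K_H$.

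To pass to the limit $n\to\infty$ I would rely on the closability argument of Lemma 1.2.3 in \cite{Nualart}: the $L^2$ convergence of $X^{x,n}_t$ to $X^x_t$ in (\ref{Lto}), together with the uniform bounds (\ref{DW2}), (\ref{Est1})--(\ref{Est2}) and the estimates on the terms $I_j(s_1,s_2)$, identifies $D^W X^x_t$ and $D^B X^x_t$ as the weak $L^2$-limits of the approximating derivatives. The main obstacle is to identify these weak limits with the explicit right-hand sides. For this one needs joint convergence in probability of $\int_s^t\!\int_{\mathbb{R}} f_n(r,y)\,L^{Y^n}(dr,dy)$ to $\int_s^t\!\int_{\mathbb{R}} f(r,y)\,L^{Y}(dr,dy)$ on a set of full $\omega_2$-measure, which in turn requires (i) $f_n\to f$ in the Banach norm $\|\cdot\|_x$ of Definition \ref{LocalTime}, obtainable by dominated convergence after a Girsanov change of measure on the $\omega_1$-slice, and (ii) convergence of the random measures $L^{Y^n}(dr,dy)$ to $L^Y(dr,dy)$ in the sense required to integrate such test functions. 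Point (ii) is the technically hardest piece; it mirrors the Wiener-case analysis of \cite{BMPD} applied conditionally on $\omega_2$, and is precisely what forces the proof to proceed through the local-time reformulation rather than through a direct weak limit of the exponents $\int_s^t b_n'(u,X^{x,n}_u)\,du$, which cease to make classical sense as $n\to\infty$.
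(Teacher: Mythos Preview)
Your proposal is correct and follows essentially the same route as the paper: relation (\ref{DW}) for the smooth approximants, Lemma \ref{IBP} to convert the exponent into a local time--space integral on each $\omega_2$-slice, Theorem \ref{Ouknine} and the closability argument for the limit passage, and the transfer principle for $D^B$. The paper's own proof is a two-sentence citation of exactly these ingredients (deferring the local-time convergence issues you flag in (ii) to \cite{BMPD}), so your write-up is a faithful and more explicit unpacking of what the paper leaves implicit.
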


\begin{proof}
\bigskip The first representation is a consequence of relation (\ref{DW}) in
connection with Lemma \ref{IBP} and Theorem \ref{Ouknine}, if we use the
sample space splitting $\Omega =\Omega _{1}\times \Omega _{2}$ for sample
spaces $\Omega _{1}$, $\Omega _{2}$, on which $W_{\cdot \text{ }}$ and $%
B_{\cdot }^{H}$ are defined, respectively. The second assertion follows from
the above mentioned transfer principle for Malliavin derivatives.
\end{proof}

\begin{lemma}
\label{RepDer}Retain the conditions and notation of Theorem \ref{Ouknine}.
Then for all $0\leq t\leq T$%
\begin{equation*}
X_{t}^{\cdot }\in L^{2}(\Omega ;W_{loc}^{1,2}(\mathbb{R}))
\end{equation*}%
and%
\begin{eqnarray*}
&&\frac{\partial }{\partial x}X_{t}^{x} \\
&=&\exp \left\{ -\int_{s}^{t}\int_{\mathbb{R}}f(s,y)L^{\rho
_{2}^{-1}(X^{x}-x-\rho _{1}B^{H}(\omega _{2}))}(\omega _{1},dr,dy)\right\} 
\text{ }\omega _{2}-a.e,\omega _{1}-a.e,x-a.e,
\end{eqnarray*}%
where%
\begin{equation*}
f(s,y)=\rho _{2}^{-1}b(s,x+\rho _{1}B_{s}^{H}(\omega _{1})+\rho _{2}y).
\end{equation*}
\end{lemma}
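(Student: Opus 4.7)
The plan is to proceed by smooth approximation, combining the chain-rule formula for the spatial derivative in the smooth case with Lemma \ref{IBP} to convert the drift integral into a local-time-space integral, and then passing to the limit as in the proof of Theorem \ref{Ouknine}. Let $\widehat b_n,\widetilde b_n,n\ge 1$ be the smoothing sequences from (\ref{b1})--(\ref{b2}) and $b_n=\widehat b_n+\widetilde b_n$, and let $X^{x,n}_\cdot$ denote the associated (smooth) strong solution. By classical stochastic flow theory (Kunita), $x\mapsto X^{x,n}_t$ is $C^1$ with
\[
\tfrac{\partial}{\partial x}X^{x,n}_t=\exp\!\Big\{\int_0^t b_n'(u,X^{x,n}_u)\,du\Big\}.
\]
The key observation is that, conditional on $\omega_2\in\Omega_2$ (on which $B^H$ lives), the process $Y^{x,n}_u:=\rho_2^{-1}(X^{x,n}_u-x-\rho_1 B^H_u(\omega_2))$ is a semimartingale in $\omega_1$ with drift $f_n(u,Y^{x,n}_u):=\rho_2^{-1}b_n(u,x+\rho_1 B^H_u(\omega_2)+\rho_2 Y^{x,n}_u)$ and diffusion coefficient $1$, so that it possesses a local time $L^{Y^{x,n}}(\omega_1,\cdot,\cdot)$. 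Since $b_n'(u,X^{x,n}_u)=f_n'(u,Y^{x,n}_u)$ and $f_n$ is Lipschitz in the spatial variable, Lemma \ref{IBP} (applied in the $\omega_1$--fibre) yields
\[
\int_0^t b_n'(u,X^{x,n}_u)\,du=-\int_0^t\!\!\int_{\mathbb{R}} f_n(r,y)\,L^{Y^{x,n}}(\omega_1,dr,dy)\quad \text{a.e.\ }\omega_2.
\]

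Next I would pass to the limit $n\to\infty$. The weak $L^2(\Omega;L^2(U))$ compactness of $\tfrac{\partial}{\partial x}X^{x,n}_t$ on bounded open $U\subset\mathbb{R}$ follows from the uniform $L^p$ bound $\sup_n E[|\partial_x X^{x,n}_t|^p]<\infty$, which is obtained via Girsanov with respect to $W$ together with $|\widehat b_n'|\le K$ and the boundedness of $\widetilde b_n$, exactly as in the estimates leading to (\ref{DW2}) in the proof of Theorem \ref{Ouknine}. Combined with the convergence $X^{x,n}_t\to X^x_t$ in $L^2(\Omega)$ from Theorem \ref{Ouknine} (locally uniformly in $x$), a standard weak compactness argument in Sobolev spaces gives $X_t^\cdot\in L^2(\Omega;W^{1,2}_{loc}(\mathbb R))$ and identifies the weak limit of $\tfrac{\partial}{\partial x}X^{x,n}_t$ as the Sobolev derivative $\tfrac{\partial}{\partial x}X^{x}_t$.

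It remains to identify this limit with the claimed exponential of the local-time-space integral. Here I would use the decomposition $f_n=\rho_2^{-1}(\widetilde b_n+\widehat b_n)$: for the Lipschitz part, Lemma \ref{IBP} applied in the opposite direction converts the local-time integral back to $\int_0^t \widehat b'(u,X^x_u)du$, and the a.e.\ convergence of $\widehat b_n'$ to $\widehat b'$ together with dominated convergence handles the passage to the limit. For the merely bounded part $\widetilde b_n$, one shows using the density of simple functions that $\widetilde b_n\to\widetilde b$ in the norm $\|\cdot\|_x$ of Definition \ref{LocalTime} (along a subsequence, using a.e.\ convergence and uniform boundedness), and combines with the stability of local time of $Y^{x,n}$ under drift perturbations (cf.\ Lemma A.3 in \cite{BMPD} and Lemma \ref{WienerTransform} above) to obtain convergence in probability of $\int_0^t\!\int_\mathbb{R} f_n\,dL^{Y^{x,n}}$ to $\int_0^t\!\int_\mathbb{R} f\,dL^{Y^{x}}$. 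Finally, the uniform $L^p$ bound on the exponentials upgrades convergence in probability to $L^2$ convergence, identifying the limit with the right-hand side of the claimed formula.

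The main obstacle is the last step: controlling the local-time-space integral in the limit when only the bounded, discontinuous part $\widetilde b$ is present. The difficulty is that $\widetilde b_n\to\widetilde b$ only a.e., while the integrator $L^{Y^{x,n}}$ itself depends on $n$, so one cannot separate the two limits. The remedy is to work in the Banach space $(\mathcal H^x,\|\cdot\|_x)$ of Definition \ref{LocalTime}, where the local-time-space integral is a continuous bilinear functional of $(f,L^{Y})$, and to invoke the quantitative stability of local times for SDEs with bounded measurable drift from \cite{BMPD}, transferred to our setting via conditioning on $\omega_2$ and Girsanov with respect to $W$.
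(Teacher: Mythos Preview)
Your approach is correct and is essentially the same as the paper's: the paper simply states that the result is a direct consequence of the proof of Proposition~3.5 in \cite{BMPD}, once one uses the sample space splitting $\Omega=\Omega_1\times\Omega_2$, and what you have written is precisely a reconstruction of that argument (smooth approximation, the chain-rule formula for $\partial_x X^{x,n}_t$, conversion via Lemma~\ref{IBP} in the $\omega_1$-fibre, uniform $L^p$ bounds and weak Sobolev compactness, and finally identification of the limit through stability of the local-time-space integral in the norm of Definition~\ref{LocalTime}). The obstacle you flag in the last paragraph is exactly the point that \cite{BMPD} handles, and your proposed remedy via $(\mathcal H^x,\|\cdot\|_x)$ together with Girsanov in $W$ conditioned on $\omega_2$ is the correct transfer of their argument to the present setting.
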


\begin{proof}
The proof is a direct consequence of the proof of Proposition 3.5 in \cite%
{BMPD}, if one uses the sample space splitting $\Omega =\Omega _{1}\times
\Omega _{2}$.
\end{proof}

\bigskip

\begin{lemma}
\label{u(x)}Adopt the conditions and notation of Theorem \ref{Ouknine}. Let $%
\Phi \in C_{0}^{\infty }(\mathbb{R})$ and define the functions $u_{n},$ $u$
for $T>0$ given by%
\begin{equation*}
u_{n}(x):=E[\Phi (X_{T}^{x,n})]\text{ and }u(x):=E[\Phi (X_{T}^{x})],
\end{equation*}%
where $X_{\cdot }^{x,n},n\geq 1$ is the approximating sequence of solutions
associated with the vector fields $b_{n},n\geq 1$ given by (\ref{b1}) and (%
\ref{b2}). Further, let $\overline{u}$ be the function defined as%
\begin{equation*}
\overline{u}(x):=E[\Phi ^{\shortmid }(X_{T}^{x})\frac{\partial }{\partial x}%
X_{T}^{x}].
\end{equation*}%
Then%
\begin{equation*}
u_{n}(x)\underset{n\longrightarrow \infty }{\longrightarrow }u(x)\text{ for
all }x
\end{equation*}%
as well as%
\begin{equation*}
u_{n}^{\shortmid }(x)\underset{n\longrightarrow \infty }{\longrightarrow }%
\overline{u}(x)
\end{equation*}%
uniformly on compact subsets $K\subset \mathbb{R}.$So $u\in C^{1}(\mathbb{R}%
) $ with $u^{\shortmid }=\overline{u}(x)$.
\end{lemma}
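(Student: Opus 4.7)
The plan is to invoke the standard real-analysis criterion: if $u_{n}\to u$ pointwise on $\mathbb{R}$ and $u_{n}^{\shortmid}\to v$ uniformly on compact sets, then $u\in C^{1}(\mathbb{R})$ with $u^{\shortmid}=v$. First I would check pointwise convergence $u_{n}(x)\to u(x)$. Since $\Phi\in C_{0}^{\infty}(\mathbb{R})$ is bounded and continuous, this reduces to showing $X_{T}^{x,n}\to X_{T}^{x}$ in $L^{2}(\Omega)$ for each $x$, which is exactly the content of Lemma \ref{WienerTransform} combined with the conclusion of Theorem \ref{Ouknine} that the conditional expectation $E[X_{T}^{x}\,|\,\mathcal{F}_{T}]$ coincides with the strong solution $X_{T}^{x}$; dominated convergence then finishes this step.

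For each fixed $n$, since $b_{n}$ is smooth with at most linear growth and uniformly Lipschitz spatial derivative, classical SDE-flow theory gives $u_{n}\in C^{1}(\mathbb{R})$ with
\begin{equation*}
u_{n}^{\shortmid}(x)=E\bigl[\Phi^{\shortmid}(X_{T}^{x,n})\,\partial_{x}X_{T}^{x,n}\bigr],\qquad \partial_{x}X_{T}^{x,n}=\exp\!\Bigl\{\int_{0}^{T}b_{n}^{\shortmid}(s,X_{s}^{x,n})\,ds\Bigr\}.
\end{equation*}
By Lemma \ref{IBP} applied to $b_{n}\in\mathcal{H}^{x}$ the exponent equals $-\int_{0}^{T}\!\int_{\mathbb{R}}b_{n}(s,y)\,L^{X^{x,n}}(ds,dy)$, and because $\widetilde{b}_{n}$ is uniformly bounded and $\widehat{b}_{n}$ has a uniform Lipschitz constant, standard estimates on the local time–space integral yield uniform $L^{p}(\Omega)$-bounds for $\partial_{x}X_{T}^{x,n}$ for $x$ in any compact $K\subset\mathbb{R}$ and all $n$.

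The main task is to upgrade this to uniform convergence $u_{n}^{\shortmid}\to\overline{u}$ on compacts. I would write
\begin{equation*}
u_{n}^{\shortmid}(x)-\overline{u}(x)=E\bigl[\bigl(\Phi^{\shortmid}(X_{T}^{x,n})-\Phi^{\shortmid}(X_{T}^{x})\bigr)\partial_{x}X_{T}^{x,n}\bigr]+E\bigl[\Phi^{\shortmid}(X_{T}^{x})\bigl(\partial_{x}X_{T}^{x,n}-\partial_{x}X_{T}^{x}\bigr)\bigr].
\end{equation*}
For the first term, the Lipschitz property of $\Phi^{\shortmid}$ combined with Hölder's inequality, the uniform $L^{p}$-bound on $\partial_{x}X_{T}^{x,n}$, and the uniform-in-$x\in K$ $L^{2}$-convergence $X_{T}^{x,n}\to X_{T}^{x}$ (the latter following from the Girsanov/Novikov estimates in the proof of Theorem \ref{Ouknine}, which depend only on $\|\widetilde{b}_{n}\|_{\infty}$, the Lipschitz constant of $\widehat{b}_{n}$, and $K$) drives the term to $0$ uniformly on $K$. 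For the second term I would compare the local time–space representation of $\partial_{x}X_{T}^{x,n}$ with the representation of $\partial_{x}X_{T}^{x}$ from Lemma \ref{RepDer}; the a.e. convergence $b_{n}\to b$, the continuity of the local time–space integral map on $\mathcal{H}^{x}$ (Definition \ref{LocalTime}), and the convergence $X^{x,n}\to X^{x}$ give $\partial_{x}X_{T}^{x,n}\to\partial_{x}X_{T}^{x}$ in $L^{1}(\Omega)$, again uniformly in $x\in K$, and boundedness of $\Phi^{\shortmid}$ closes the estimate.

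The main obstacle is the uniformity in $x\in K$ of the various convergences, since both the flow $X^{x,n}$ and its derivative depend on the initial point and the arguments go through a Girsanov change of measure whose density depends on $x$. The key technical point is to show that the $\mathcal{H}^{x}$-norm bounds for $b_{n}-b$ and the local-time convergence are uniform for $x$ in a compact set; this can be extracted from the explicit kernel form of the $\mathcal{H}^{x}$-norm together with the uniform bounds on $\widetilde{b}_{n}$ and $\widehat{b}_{n}$. Once uniform convergence of $u_{n}^{\shortmid}$ to $\overline{u}$ on compacts is in hand, the conclusion $u\in C^{1}(\mathbb{R})$ with $u^{\shortmid}=\overline{u}$ follows from the criterion recalled at the start.
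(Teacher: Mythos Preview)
Your outline is essentially the route the paper takes: the paper's proof is simply a pointer to Lemma~4.1 in \cite{BMPD}, carried out through the sample-space splitting $\Omega=\Omega_{1}\times\Omega_{2}$, and what you have written reconstructs the content of that lemma (pointwise convergence of $u_{n}$, the two-term decomposition of $u_{n}^{\shortmid}-\overline{u}$, Lipschitz control on $\Phi^{\shortmid}$ for the first term, and the local time--space representation for the second).

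One point needs tightening. When you write the exponent as $-\int_{0}^{T}\!\int_{\mathbb{R}}b_{n}(s,y)\,L^{X^{x,n}}(ds,dy)$ via Lemma~\ref{IBP}, note that $X^{x,n}$ is driven by $\rho_{1}B^{H}+\rho_{2}W$ and is therefore not a semimartingale, so its local time in the sense of Definition~\ref{LocalTime} (which is set up only for $\rho_{1}=0$, $\rho_{2}=1$) is not available. The correct object is the one in Lemma~\ref{RepDer}: for fixed $\omega_{2}$, the process $\rho_{2}^{-1}\bigl(X^{x,n}(\cdot,\omega_{2})-x-\rho_{1}B^{H}(\omega_{2})\bigr)$ is a Wiener-driven semimartingale in $\omega_{1}$, and it is \emph{its} local time that enters. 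This is precisely where the splitting $\Omega=\Omega_{1}\times\Omega_{2}$ becomes essential and reduces everything to the pure Wiener setting of \cite{BMPD}; the uniformity in $x\in K$ you worry about then comes for free from the $\omega_{2}$-uniform Girsanov bounds already established in the proof of Theorem~\ref{Ouknine}. With that adjustment your argument matches the paper's.
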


\begin{proof}
\bigskip The proof is the same as that of Lemma 4.1 in \cite{BMPD} applied
to the splitting $\Omega =\Omega _{1}\times \Omega _{2}$.
\end{proof}

\bigskip We are coming now to the proof of Theorem \ref{Volatility2}:

\begin{proof}[Proof of Theorem \protect\ref{Volatility2}]
: The proof is a consequence of that of Theorem \ref{Volatility} in
combination with Theorem \ref{Ouknine} and Lemma \ref{u(x)}.
\end{proof}

\bigskip

\begin{proposition}
\label{Stability}\bigskip \bigskip Let $H<\frac{1}{6}$ and let $X_{\cdot
}^{s,x}$ be the unique strong solution to the SDE%
\begin{equation}
dX_{t}^{s,x}=b(t,X_{t}^{s,x})dt+\rho _{1}dB_{t}^{H}+\rho
_{2}dW_{t}dB_{t}^{H},X_{s}^{x}=x,0\leq t\leq T,  \label{11SDE}
\end{equation}%
where $b$ is of the form (\ref{Decomp}) and $\rho _{1}\in \mathbb{R}%
\smallsetminus \left\{ 0\right\} ,\rho _{2}\in \mathbb{R}$. Let $K$ be a
compact cube in $\mathbb{R}$ and $r\in \mathbb{N}.$ Then for all $s\in \left[
0,T\right) $ and $x,y\in K$:%
\begin{equation}
E\left[ \sup_{t\in \left[ s,T\right] }\left\vert
X_{t}^{s,x}-X_{t}^{s,y}\right\vert ^{2^{r}}\right] \leq
C_{r,H,T}(K)\left\vert x-y\right\vert ^{2^{r}}\text{.}  \label{SEstimate}
\end{equation}
\end{proposition}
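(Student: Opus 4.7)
My plan is to derive the stability estimate via Sobolev differentiability of $x \mapsto X_t^{s,x}$, established at the level of smooth approximations and transferred to the limit. Throughout I would work with the smooth approximations $b_n = \widetilde{b}_n + \widehat{b}_n$ from (\ref{b1})--(\ref{b2}) and the associated classical solutions $X_\cdot^{s,x,n}$, exploiting the $L^2$-convergence $X_t^{s,x,n} \to X_t^{s,x}$ from Lemma \ref{WienerTransform}.

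The core step is to establish a uniform-in-$n$ moment bound
\[
M(r,K) := \sup_{n \ge 1}\, \sup_{z \in K}\, E\!\left[\sup_{t \in [s,T]} \left|\frac{\partial}{\partial z} X_t^{s,z,n}\right|^{2^r}\right] < \infty.
\]
At the smooth level one has the classical chain-rule representation $\frac{\partial}{\partial z} X_t^{s,z,n} = \exp\!\bigl(\int_s^t b_n'(u, X_u^{s,z,n})\, du\bigr)$. The Lipschitz part $\widehat{b}_n'$ is bounded by $K$ and contributes only the factor $e^{KT}$. The bounded part $\widetilde{b}_n$ has a derivative $\widetilde{b}_n'$ that blows up as $n \to \infty$, so to bound it uniformly I would apply Girsanov's theorem (Theorem \ref{girsanov}, in the mixed-driver version of Remark \ref{RemarkStrongSolution}) to replace $X_\cdot^{s,z,n}$ inside the exponential by the free Gaussian process $z + \rho_1 B_\cdot^H + \rho_2 W_\cdot$, and then invoke the local-time variational integration-by-parts argument used in the proof of Theorem \ref{StrongSolution}. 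That argument trades the singular $\widetilde{b}_n'$ for the uniformly bounded $\widetilde{b}_n$, integrated against a local-time kernel whose exponential moments are controlled by Lemma \ref{Exp}, precisely under the hypothesis $H < \tfrac{1}{2(d+2)} = \tfrac16$. The Dol\'eans exponential produced by Girsanov is separated from the other factor by Cauchy--Schwarz and its moments are likewise bounded through Lemma \ref{Exp}.

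Given $M(r,K) < \infty$, the rest is a standard Sobolev argument. Smoothness of $z \mapsto X_t^{s,z,n}$, the fundamental theorem of calculus, and H\"older's inequality yield
\[
\sup_{t \in [s,T]} |X_t^{s,x,n} - X_t^{s,y,n}|^{2^r} \le |x-y|^{2^r-1} \int_{\min(x,y)}^{\max(x,y)} \sup_{t \in [s,T]} \left|\frac{\partial}{\partial z} X_t^{s,z,n}\right|^{2^r} dz,
\]
so that taking expectations and applying Fubini-Tonelli delivers
\[
E\!\left[\sup_{t \in [s,T]} |X_t^{s,x,n} - X_t^{s,y,n}|^{2^r}\right] \le M(r,K)\, |x-y|^{2^r}
\]
uniformly in $n$. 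The pointwise $L^2$-convergence from Lemma \ref{WienerTransform}, combined with the uniform moment bound just established, gives tightness of $\{X_\cdot^{s,x,n}\}_n$ in $C([s,T])$ and hence a subsequence converging uniformly in $t$ almost surely; Fatou's lemma then preserves the estimate (\ref{SEstimate}) in the limit.

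The main obstacle is obtaining $M(r,K) < \infty$: the derivative estimates recorded in the proofs of Theorems \ref{StrongSolution} and \ref{Ouknine} bound $E[|\partial_z X_t^{s,z,n}|^p]$ only at each fixed $t$, so the challenge is to pull the $t$-supremum inside the expectation without losing uniformity in $n$. The Girsanov step is structurally the same as in Section 2, but the subsequent local-time variational estimate must be applied to a pathwise supremum rather than to a single-time-point expression; in particular, because $b_n'$ is not of constant sign, the running integral $t \mapsto \int_s^t b_n'(u, X_u^{s,z,n})\, du$ is not monotone, so the exponential moment bound has to be obtained by splitting into positive and negative parts of $\widetilde{b}_n'$ before applying the local-time integration by parts. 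This is the technical heart of the proof; once it is in hand, everything else is bookkeeping.
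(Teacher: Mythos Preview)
Your overall architecture---smooth approximations, the chain-rule representation $\partial_z X_t^{s,z,n}=\exp\bigl(\int_s^t b_n'(u,X_u^{s,z,n})\,du\bigr)$, Girsanov to pass to the free Gaussian driver, fundamental theorem of calculus in $z$, then Fatou---matches the paper. The gap is exactly where you flag it: controlling the supremum in $t$.

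Your proposed fix, splitting $\widetilde b_n'$ into positive and negative parts before applying the local-time integration by parts, does not work. The local-time/variational argument (formula (\ref{ibp0}) and Theorem \ref{mainestimate}) trades $\int D^\alpha f$ for an expression controlled by $\|f\|_\infty$; it is the \emph{derivative} structure that makes the estimate uniform in $n$. The function $(\widetilde b_n')^+$ is not the derivative of anything with uniformly bounded sup norm: if $\widetilde b_n$ oscillates between $\pm L$ at frequency $n$, then $\int_0^T(\widetilde b_n')^+\,du\sim nL\to\infty$. So after the split you have lost precisely the cancellation that the integration-by-parts exploits, and the exponential moment blows up along the approximating sequence. (Incidentally, the relevant exponential bound here is Lemma \ref{L. 2.12}/inequality (\ref{Lemma 2.12}), not Lemma \ref{Exp}, which concerns $L^1$ vector fields.)

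The paper avoids this obstruction by not attempting to bound $\sup_t|\partial_z X_t^{s,z,n}|$ at all. Instead it applies the Garsia--Rodemich--Rumsey lemma (Lemma \ref{GarciaRodemichRumsey}) to $f(t)=|X_t^x-X_t^y|$, which converts the pathwise supremum into a double time integral of $E\bigl[|\partial_z X_{t_2}^{z,n}-\partial_z X_{t_1}^{z,n}|^p/d(t_2,t_1)^p\bigr]$. Via $|e^{z_1}-e^{z_2}|\le(e^{z_1}\vee e^{z_2})|z_1-z_2|$ this reduces to controlling $\bigl|\int_{t_1}^{t_2}\widetilde b_n'(u,\cdot)\,du\bigr|$---an honest increment with no absolute value inside---to which the integration-by-parts argument of Lemma \ref{L. 2.12} \emph{does} apply, yielding a factor $|t_2-t_1|^{1-3H}$ that makes the GRR integral converge. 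This is where the hypothesis $H<\tfrac16$ actually enters.
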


\bigskip

\begin{proof}
Let us assume without loss of generality that $s=0$, $T=1$, $\rho _{1},\rho
_{2}=1$. In proving this result we aim at employing the inequality of
Garsia-Rodemich-Rumsey (see Lemma \ref{GarciaRodemichRumsey} in the
Appendix) in the the case, when $d(t,s)=\left\vert t-s\right\vert ^{\frac{%
\varepsilon }{1+\varepsilon }}$, $0<\varepsilon <1$, $\Psi (x)=x^{^{\frac{%
4(1+\varepsilon )}{\varepsilon }}},x\geq 0$, $\Lambda =\left[ 0,1\right] $, $%
f(t)=\left\vert X_{t}^{x}-X_{t}^{y}\right\vert ,x,y\in K$, where $K$ is a
compact cube in $\mathbb{R}$. Then, $\sigma (r)\geq r^{\frac{1+\varepsilon }{%
\varepsilon }}$ and we obtain that%
\begin{equation*}
\left\vert f(t)-f(s)\right\vert \leq 18\int_{0}^{d(t,s)/2}\Psi ^{-1}\left( 
\frac{U}{(\sigma (r))^{2}}\right) dr,
\end{equation*}%
where%
\begin{equation*}
U=\int_{0}^{1}\int_{0}^{1}\Psi \left( \frac{\left\vert
f(t_{2})-f(t_{1})\right\vert }{d(t_{2},t_{1})}\right) dt_{2}dt_{1}.
\end{equation*}%
For $s=0$ we get that%
\begin{eqnarray*}
\left\vert f(t)\right\vert &\leq &18\int_{0}^{d(t,0)/2}\Psi ^{-1}\left( 
\frac{U}{(\sigma (r))^{2}}\right) dr+\left\vert f(0)\right\vert \\
&=&18\int_{0}^{d(t,0)/2}\Psi ^{-1}\left( \frac{U}{(\sigma (r))^{2}}\right)
dr+\left\vert x-y\right\vert .
\end{eqnarray*}%
Let $p=2^{r}>1$ with $r\in \mathbb{N}$ such that $p\frac{\varepsilon }{%
4(1+\varepsilon )}>1$. Then%
\begin{equation*}
\left\vert f(t)\right\vert ^{p}\leq C_{p}((\int_{0}^{d(t,0)/2}\Psi
^{-1}\left( \frac{U}{(\sigma (r))^{2}}\right) dr)^{p}+\left\vert
x-y\right\vert ^{p}).
\end{equation*}%
Thus,%
\begin{eqnarray*}
\sup_{0\leq t\leq 1}\left\vert f(t)\right\vert ^{p} &\leq
&C_{p}((\int_{0}^{1}\Psi ^{-1}\left( \frac{U}{(\sigma (r))^{2}}\right)
dr)^{p}+\left\vert x-y\right\vert ^{p}) \\
&=&C_{p}((\int_{0}^{1}\left( \frac{U}{(\sigma (r))^{2}}\right) ^{\frac{%
\varepsilon }{4(1+\varepsilon )}}dr)^{p}+\left\vert x-y\right\vert ^{p}) \\
&\leq &C_{p}((\int_{0}^{1}\left( \frac{1}{r^{^{\frac{2(1+\varepsilon )}{%
\varepsilon }}}}\right) ^{\frac{\varepsilon }{4(1+\varepsilon )}}dr)^{p}U^{p%
\frac{\varepsilon }{4(1+\varepsilon )}}+\left\vert x-y\right\vert ^{p}) \\
&=&C_{p}((\int_{0}^{1}\frac{1}{r^{^{\frac{1}{2}}}}dr)^{p}U^{p\frac{%
\varepsilon }{4(1+\varepsilon )}}+\left\vert x-y\right\vert ^{p}) \\
&\leq &C_{p}(U^{p\frac{\varepsilon }{4(1+\varepsilon )}}+\left\vert
x-y\right\vert ^{p}).
\end{eqnarray*}%
Further,%
\begin{eqnarray*}
U^{p\frac{\varepsilon }{4(1+\varepsilon )}} &\leq
&\int_{0}^{1}\int_{0}^{1}\left( \Psi \left( \frac{\left\vert
f(t_{2})-f(t_{1})\right\vert }{d(t_{2},t_{1})}\right) \right) ^{p\frac{%
\varepsilon }{4(1+\varepsilon )}}dt_{2}dt_{1} \\
&=&\int_{0}^{1}\int_{0}^{1}\left( \left( \frac{\left\vert
f(t_{2})-f(t_{1})\right\vert }{d(t_{2},t_{1})}\right) ^{\frac{%
4(1+\varepsilon )}{\varepsilon }}\right) ^{p\frac{\varepsilon }{%
4(1+\varepsilon )}}dt_{2}dt_{1} \\
&=&\int_{0}^{1}\int_{0}^{1}\left( \frac{\left\vert
f(t_{2})-f(t_{1})\right\vert }{d(t_{2},t_{1})}\right) ^{p}dt_{2}dt_{1}.
\end{eqnarray*}%
Hence, we find that%
\begin{eqnarray*}
E\left[ \sup_{0\leq t\leq 1}\left\vert f(t)\right\vert ^{p}\right] &\leq
&C_{p}(E\left[ U^{p\frac{\varepsilon }{2(1+\varepsilon )}}\right]
+\left\vert x-y\right\vert ^{p}) \\
&\leq &C_{p}(\int_{0}^{1}\int_{0}^{1}E\left[ \left( \frac{\left\vert
f(t_{2})-f(t_{1})\right\vert }{d(t_{2},t_{1})}\right) ^{p}\right]
dt_{2}dt_{1}+\left\vert x-y\right\vert ^{p}) \\
&\leq &C_{p}(\int_{0}^{1}\int_{0}^{1}E\left[ \left( \frac{\left\vert
X_{t_{2}}^{x}-X_{t_{2}}^{y}-(X_{t_{1}}^{x}-X_{t_{1}}^{y})\right\vert }{%
d(t_{2},t_{1})}\right) ^{p}\right] dt_{2}dt_{1}+\left\vert x-y\right\vert
^{p}).
\end{eqnarray*}%
Further, by applying \ref{Lto} in the proof of Theorem \ref{Ouknine}, the
uniform integrability of $\left\vert X_{t}^{x,n}\right\vert ^{p},n\geq
1,0\leq t\leq 1$ and Fatou`s Lemma, we see that%
\begin{eqnarray*}
&&E\left[ \sup_{0\leq t\leq 1}\left\vert f(t)\right\vert ^{p}\right] \\
&\leq &C_{p}(\underline{\lim }_{n\longrightarrow \infty
}\int_{0}^{1}\int_{0}^{1}E\left[ \left( \frac{\left\vert
X_{t_{2}}^{x,n}-X_{t_{2}}^{y,n}-(X_{t_{1}}^{x,n}-X_{t_{1}}^{y,n})\right\vert 
}{d(t_{2},t_{1})}\right) ^{p}\right] dt_{2}dt_{1}+\left\vert x-y\right\vert
^{p})\text{,}
\end{eqnarray*}%
where $X_{t}^{x,n},0\leq t\leq 1$ is the strong solution to (\ref{11SDE})
associated with the approximating sequence of smooth vector fields $%
b_{n},n\geq 1$ given by (\ref{b1}) and (\ref{b2}).

Assume without loss of generality that $x>y$. Then, by using the fundamental
theorem of calculus, we see that%
\begin{equation*}
X_{t_{2}}^{x,n}-X_{t_{2}}^{y,n}-(X_{t_{1}}^{x,n}-X_{t_{1}}^{y,n})=%
\int_{y}^{x}(\frac{\partial }{\partial z}X_{t_{2}}^{z,n}-\frac{\partial }{%
\partial z}X_{t_{1}}^{z,n})dz.
\end{equation*}%
Hence,%
\begin{eqnarray*}
&&E\left[ \left( \frac{\left\vert
X_{t_{2}}^{x,n}-X_{t_{2}}^{y,n}-(X_{t_{1}}^{x,n}-X_{t_{1}}^{y,n})\right\vert 
}{d(t_{2},t_{1})}\right) ^{p}\right] \\
&\leq &E\left[ \left( \int_{y}^{x}\frac{\left\vert \frac{\partial }{\partial
z}X_{t_{2}}^{z,n}-\frac{\partial }{\partial z}X_{t_{1}}^{z,n}\right\vert }{%
d(t_{2},t_{1})}dz\right) ^{p}\right] \\
&\leq &\left\vert x-y\right\vert ^{p-1}E\left[ \int_{y}^{x}\left( \frac{%
\left\vert \frac{\partial }{\partial z}X_{t_{2}}^{z,n}-\frac{\partial }{%
\partial z}X_{t_{1}}^{z,n}\right\vert }{d(t_{2},t_{1})}\right) ^{p}dz\right]
\\
&=&\left\vert x-y\right\vert ^{p-1}\int_{y}^{x}E\left[ \left( \frac{%
\left\vert \frac{\partial }{\partial z}X_{t_{2}}^{z,n}-\frac{\partial }{%
\partial z}X_{t_{1}}^{z,n}\right\vert }{d(t_{2},t_{1})}\right) ^{p}\right] dz
\\
&\leq &\left\vert x-y\right\vert ^{p}\sup_{z\in K}E\left[ \left( \frac{%
\left\vert \frac{\partial }{\partial z}X_{t_{2}}^{z,n}-\frac{\partial }{%
\partial z}X_{t_{1}}^{z,n}\right\vert }{d(t_{2},t_{1})}\right) ^{p}\right]
\end{eqnarray*}%
Thus%
\begin{eqnarray}
&&E\left[ \sup_{0\leq t\leq 1}\left\vert f(t)\right\vert ^{p}\right]  \notag
\\
&\leq &C_{p,d}(\underline{\lim }_{n\longrightarrow \infty
}\int_{0}^{1}\int_{0}^{1}\sup_{z\in K}E\left[ (\left\vert \frac{\partial }{%
\partial x}X_{t_{2}}^{z,n}-\frac{\partial }{\partial x}X_{t_{1}}^{z,n}\right%
\vert /d(t_{2},t_{1}))^{p})\right] dt_{2}dt_{1}\left\vert x-y\right\vert ^{p}
\notag \\
&&+\left\vert x-y\right\vert ^{p})\text{.}  \label{Garcia}
\end{eqnarray}%
By Lemma 2.6 in \cite{BMPD} that there exists a $\beta >0$ depending on $K$
such that%
\begin{equation*}
\sup_{x\in K}E\left[ \mathcal{E}\left(
\int_{0}^{T}b_{n}(u,x+B_{u}^{H}+W_{u})dW_{u}\right) ^{1+\beta }\right]
<\infty
\end{equation*}%
where $\mathcal{E}(M_{T})=\mathcal{E}_{T}(M)$ denotes the Doleans-Dade
exponential of a martingale $M_{\cdot }$. Moreover, the proof of Lemma 2.6
in \cite{BMPD} in connection with the properties (\ref{b1}), (\ref{b2}) show
that%
\begin{eqnarray}
&&E\left[ \mathcal{E}\left(
\int_{0}^{T}b_{n}(u,x+B_{u}^{H}+W_{u})dW_{u}\right) ^{1+\beta }\right] 
\notag \\
&\leq &e^{\widetilde{C}_{\beta ,T}T(1+\left\vert x\right\vert )^{2}}\times 
\notag \\
&&E\left[ \exp \left\{ 2\widetilde{C}_{\beta ,T}(1+\left\vert x\right\vert
)\int_{0}^{T}\left\vert B_{u}\right\vert du+\widetilde{C}_{\beta ,T}\right\}
\int_{0}^{T}\left\vert B_{u}\right\vert ^{2}du\right] ,  \label{DIneq}
\end{eqnarray}%
where $\widetilde{C}_{\beta ,T}$ is a constant with $\lim_{\beta \searrow 0}%
\widetilde{C}_{\beta ,T}=0$.

We know that%
\begin{equation*}
\frac{\partial }{\partial x}X_{t}^{x,n}=\exp (\int_{0}^{t}b_{n}^{\shortmid
}(u,X_{u}^{x,n})du),0\leq t\leq 1\text{.}
\end{equation*}%
Then, using Girsanov`s theorem with respect to the Brownian motion $W_{\cdot
}$ and H\"{o}lder`s inequality, we find that%
\begin{eqnarray}
&&E\left[ \left( \frac{\left\vert \frac{\partial }{\partial z}%
X_{t_{2}}^{x,n}-\frac{\partial }{\partial z}X_{t_{1}}^{x,n}\right\vert }{%
d(t_{2},t_{1})}\right) ^{p}\right]  \notag \\
&\leq &E\left[ \left( \frac{\left\vert \frac{\partial }{\partial z}%
X_{t_{2}}^{x,n}-\frac{\partial }{\partial z}X_{t_{1}}^{x,n}\right\vert }{%
d(t_{2},t_{1})}\right) ^{p\frac{1+\beta }{\beta }}\right] ^{\frac{\beta }{%
1+\beta }}E\left[ \mathcal{E}\left(
\int_{0}^{T}b_{n}(u,x+B_{u}^{H}+W_{u})dW_{u}\right) ^{1+\beta }\right] ^{%
\frac{1}{1+\beta }}  \notag \\
&=&E\left[ \left( \frac{\left\vert \exp (\int_{0}^{t_{2}}b_{n}^{\shortmid
}(u,x+B_{u}^{H}+W_{u})du)-\exp (\int_{0}^{t_{1}}b_{n}^{\shortmid
}(u,x+B_{u}^{H}+W_{u})du)\right\vert }{d(t_{2},t_{1})}\right) ^{p\frac{%
1+\beta }{\beta }}\right] ^{\frac{\beta }{1+\beta }}\times  \notag \\
&&E\left[ \mathcal{E}\left(
\int_{0}^{T}b_{n}(u,x+B_{u}^{H}+W_{u})dW_{u}\right) ^{1+\beta }\right] ^{%
\frac{1}{1+\beta }}\text{.}  \label{QR}
\end{eqnarray}%
Further, by applying the inequality $\left\vert
e^{z_{1}}-e^{z_{2}}\right\vert \leq (e^{z_{1}}\vee e^{z_{2}})\left\vert
z_{1}-z_{2}\right\vert $ and H\"{o}lder 
%TCIMACRO{\U{b4}}%
%BeginExpansion
\'{}%
%EndExpansion
s inequality, we have for $t_{2}>t_{1}$ that%
\begin{eqnarray*}
&&E\left[ \left( \frac{\left\vert \exp (\int_{0}^{t_{2}}b_{n}^{\shortmid
}(u,x+B_{u}^{H}+W_{u})du)-\exp (\int_{0}^{t_{1}}b_{n}^{\shortmid
}(u,x+B_{u}^{H}+W_{u})du)\right\vert }{d(t_{2},t_{1})}\right) ^{p\frac{%
1+\beta }{\beta }}\right] ^{\frac{\beta }{1+\beta }} \\
&\leq &E\left[ \left( \frac{\left\vert \int_{t_{1}}^{t_{2}}b_{n}^{\shortmid
}(u,x+B_{u}^{H}+W_{u})du\right\vert }{d(t_{2},t_{1})}\right) ^{2p\frac{%
1+\beta }{\beta }}\right] ^{\frac{\beta }{2(1+\beta )}}\times \\
&&E\left[ \left( \exp (\int_{0}^{t_{2}}b_{n}^{\shortmid
}(u,x+B_{u}^{H}+W_{u})du)\vee \exp (\int_{0}^{t_{1}}b_{n}^{\shortmid
}(u,x+B_{u}^{H}+W_{u})du)\right) ^{2p\frac{1+\beta }{\beta }}\right] ^{\frac{%
\beta }{2(1+\beta )}} \\
&=&I_{1}(x)\cdot I_{2}(x)\text{.}
\end{eqnarray*}%
Suppose without loss of generality that $p\frac{1+\beta }{\beta }=m$ for $%
m\in \mathbb{N}$. We observe that%
\begin{eqnarray*}
&&E\left[ \left( \frac{\left\vert \int_{t_{1}}^{t_{2}}b_{n}^{\shortmid
}(u,x+B_{u}^{H}+W_{u})du\right\vert }{d(t_{2},t_{1})}\right) ^{2p\frac{%
1+\beta }{\beta }}\right] \\
&\leq &C_{m}E\left[ \left( \frac{\left\vert \int_{t_{1}}^{t_{2}}\widetilde{b}%
_{n}^{\shortmid }(u,x+B_{u}^{H}+W_{u})du\right\vert }{d(t_{2},t_{1})}\right)
^{2p\frac{1+\beta }{\beta }}\right. \left. +\left( \frac{\left\vert
\int_{t_{1}}^{t_{2}}\widehat{b}_{n}^{\shortmid
}(u,x+B_{u}^{H}+W_{u})du\right\vert }{d(t_{2},t_{1})}\right) ^{2p\frac{%
1+\beta }{\beta }}\right] \\
&\leq &C_{m}(E\left[ \left( \frac{\left\vert \int_{t_{1}}^{t_{2}}\widetilde{b%
}_{n}^{\shortmid }(u,x+B_{u}^{H}+W_{u})du\right\vert }{d(t_{2},t_{1})}%
\right) ^{2p\frac{1+\beta }{\beta }}\right. +K^{2p\frac{1+\beta }{\beta }%
}\left( \frac{\left\vert t_{2}-t_{1}\right\vert }{d(t_{2},t_{1})}\right) ^{2p%
\frac{1+\beta }{\beta }}).
\end{eqnarray*}%
We now choose $\varepsilon >0$ such $\frac{\varepsilon }{1+\varepsilon }%
=1-3H $. So 
\begin{equation*}
d(t,s)=\left\vert t-s\right\vert ^{\frac{\varepsilon }{1+\varepsilon }%
}=\left\vert t-s\right\vert ^{1-3H}.
\end{equation*}%
On the other hand, it follows from inequality (\ref{Lemma 2.12}) in
connection with Lemma \ref{L. 2.12} in the Appendix and (\ref{b2}) that%
\begin{eqnarray*}
&&E\left[ \left( \frac{\left\vert \int_{t_{1}}^{t_{2}}\widetilde{b}%
_{n}^{\shortmid }(u,x+B_{u}^{H}+W_{u})du\right\vert }{d(t_{2},t_{1})}\right)
^{2p\frac{1+\beta }{\beta }}\right] \\
&\leq &L_{m}E\left[ \exp (\alpha C(H,d,T)\left\Vert \widetilde{b}%
_{n}\right\Vert _{\infty }^{2}(1+\sup_{0\leq l\leq T}\left\vert
B_{l}\right\vert )^{2})\right] \\
&\leq &L_{m}E\left[ \exp (\alpha C(H,d,T)L^{2}(1+\sup_{0\leq l\leq
T}\left\vert B_{l}\right\vert )^{2})\right] <\infty
\end{eqnarray*}%
for constants $L_{m},C(H,d,T),L,\alpha $, where $\alpha =\alpha (H,d,T)>0$
is sufficiently small and $d=1$. The latter implies that%
\begin{equation*}
\sup_{x\in K}I_{1}(x)\leq (L_{m}E\left[ \exp (\alpha
C(H,d,T)L^{2}(1+\sup_{0\leq l\leq T}\left\vert B_{l}\right\vert )^{2})\right]
)^{\frac{\beta }{1+\beta }}<\infty \text{.}
\end{equation*}%
As for the factor $I_{2}(x)$ we obtain that%
\begin{eqnarray*}
&&(I_{2}(x))^{2\frac{1+\beta }{\beta }} \\
&\leq &E\left[ \left( \exp (\int_{0}^{t_{2}}b_{n}^{\shortmid
}(u,x+B_{u}^{H}+W_{u})du)\vee \exp (\int_{0}^{t_{1}}b_{n}^{\shortmid
}(u,x+B_{u}^{H}+W_{u})du)\right) ^{2p\frac{1+\beta }{\beta }}\right] \\
&\leq &E\left[ \exp (2m\int_{0}^{t_{2}}b_{n}^{\shortmid
}(u,x+B_{u}^{H}+W_{u})du)\right] +E\left[ \exp
(2m\int_{0}^{t_{1}}b_{n}^{\shortmid }(u,x+B_{u}^{H}+W_{u})du)\right] \text{.}
\end{eqnarray*}%
We also see that%
\begin{eqnarray*}
&&E\left[ \exp (2m\int_{0}^{t}b_{n}^{\shortmid }(u,x+B_{u}^{H}+W_{u})du)%
\right] \\
&\leq &C_{m,K^{\ast },T}E\left[ \exp (2m\left\vert \int_{0}^{t}\widetilde{b}%
_{n}^{\shortmid }(u,x+B_{u}^{H}+W_{u})du\right\vert )\right] \text{,}
\end{eqnarray*}%
where $K^{\ast }$ is the uniform Lipschitz constant of $\widehat{b}_{n}$ in (%
\ref{b1}). Without loss of generality consider the case, when $t=1$. Then
inequality (\ref{Lemma 2.12}) in the Appendix combined with (\ref{b1}), (\ref%
{b2}) entail that 
\begin{eqnarray*}
&&E\left[ \exp (2m\left\vert \int_{0}^{1}\widetilde{b}_{n}^{\shortmid
}(u,x+B_{u}^{H}+W_{u})du\right\vert )\right] \\
&\leq &E\left[ \exp (\frac{2m}{\sqrt{\alpha }}+\alpha \left\vert \int_{0}^{1}%
\widetilde{b}_{n}^{\shortmid }(u,x+B_{u}^{H}+W_{u})du\right\vert ^{2})\right]
\\
&\leq &C_{\alpha ,m}E\left[ \exp (\alpha C(H,d,T)L^{2}(1+\sup_{0\leq l\leq
T}\left\vert B_{l}\right\vert )^{2})\right] ,
\end{eqnarray*}%
for constants $C_{\alpha ,m},C(H,d,T),L,\alpha $, where $\alpha =\alpha
(H,d,T)>0$ is sufficiently small and $d=1$. Hence, we find that%
\begin{equation*}
\sup_{x\in K}I_{2}(x)<\infty \text{.}
\end{equation*}%
Finally, the proof follows from (\ref{Garcia}), (\ref{QR}) and (\ref{DIneq}).
\end{proof}

\begin{remark}
An estimate of the form (\ref{SEstimate}) can be e.g. found in \cite{S1}, %
\ref{S2} in the case of a Wiener process and $b\in L^{\infty }(\left[ 0,T%
\right] \times \mathbb{R}^{d})$.\ See also \cite{AMP} in the case of a
fractional Browian motion with Hurst parameter $H<\frac{1}{2(d+2)}$ and $%
b\in L_{\infty ,\infty }^{1,\infty }=L_{\infty }^{1}\cap L_{\infty }^{\infty
}$. It turns out that such an estimate- as already mentioned in the
Introduction- plays a central role for proving \emph{path-by-path uniqueness}
of solutions (see \cite{Davie}) to SDE`s with additive Wiener or fractional
Brownian noise for bounded vector fields $b$, which is a much stronger
property than pathwise uniqueness. See \cite{S1}, \cite{S2} and also \cite%
{AMP} for more details.
\end{remark}

\section{Appendix}

\bigskip For some of the proofs in this article we need a version of
Girsanov's theorem for the fractional Brownian motion. In order to state
this result, let us recall some basic concepts from fractional calculus (see 
\cite{samko.et.al.93} and \cite{lizorkin.01}).

Let $a,$ $b\in \mathbb{R}$ with $a<b$. Let $f\in L^{p}([a,b])$ with $p\geq 1$
and $\alpha >0$. Define the \emph{left-} and \emph{right-sided
Riemann-Liouville fractional integrals} as 
\begin{equation*}
I_{a^{+}}^{\alpha }f(x)=\frac{1}{\Gamma (\alpha )}\int_{a}^{x}(x-y)^{\alpha
-1}f(y)dy
\end{equation*}%
and 
\begin{equation*}
I_{b^{-}}^{\alpha }f(x)=\frac{1}{\Gamma (\alpha )}\int_{x}^{b}(y-x)^{\alpha
-1}f(y)dy
\end{equation*}%
for almost all $x\in \lbrack a,b]$. Here $\Gamma $ denotes the Gamma
function.

Let $p\geq 1$ and let $I_{a^{+}}^{\alpha }(L^{p})$ (resp. $I_{b^{-}}^{\alpha
}(L^{p})$) be the image of $L^{p}([a,b])$ of the operator $I_{a^{+}}^{\alpha
}$ (resp. $I_{b^{-}}^{\alpha }$). If $f\in I_{a^{+}}^{\alpha }(L^{p})$
(resp. $f\in I_{b^{-}}^{\alpha }(L^{p})$) and $0<\alpha <1$ then we can
introduce the \emph{left-} and \emph{right-sided Riemann-Liouville
fractional derivatives} by 
\begin{equation*}
D_{a^{+}}^{\alpha }f(x)=\frac{1}{\Gamma (1-\alpha )}\frac{d}{dx}\int_{a}^{x}%
\frac{f(y)}{(x-y)^{\alpha }}dy
\end{equation*}%
and 
\begin{equation*}
D_{b^{-}}^{\alpha }f(x)=\frac{1}{\Gamma (1-\alpha )}\frac{d}{dx}\int_{x}^{b}%
\frac{f(y)}{(y-x)^{\alpha }}dy.
\end{equation*}

The left- and right-sided derivatives of $f$ have the representations 
\begin{equation*}
D_{a^{+}}^{\alpha }f(x)=\frac{1}{\Gamma (1-\alpha )}\left( \frac{f(x)}{%
(x-a)^{\alpha }}+\alpha \int_{a}^{x}\frac{f(x)-f(y)}{(x-y)^{\alpha +1}}%
dy\right)
\end{equation*}%
and 
\begin{equation*}
D_{b^{-}}^{\alpha }f(x)=\frac{1}{\Gamma (1-\alpha )}\left( \frac{f(x)}{%
(b-x)^{\alpha }}+\alpha \int_{x}^{b}\frac{f(x)-f(y)}{(y-x)^{\alpha +1}}%
dy\right) .
\end{equation*}

The above definitions entail that 
\begin{equation*}
I_{a^{+}}^{\alpha }(D_{a^{+}}^{\alpha }f)=f
\end{equation*}%
for all $f\in I_{a^{+}}^{\alpha }(L^{p})$ and 
\begin{equation*}
D_{a^{+}}^{\alpha }(I_{a^{+}}^{\alpha }f)=f
\end{equation*}%
for all $f\in L^{p}([a,b])$ and similarly for $I_{b^{-}}^{\alpha }$ and $%
D_{b^{-}}^{\alpha }$.

\bigskip

Denote by $B^{H}=\{B_{t}^{H},t\in \lbrack 0,T]\}$ a $d$-dimensional \emph{%
fractional Brownian motion} with Hurst parameter $H\in (0,1/2)$. Here this
means that $B_{\cdot }^{H}$ is a centered Gaussian process with a covariance
function given by 
\begin{equation*}
(R_{H}(t,s))_{i,j}:=E[B_{t}^{H,(i)}B_{s}^{H,(j)}]=\delta _{ij}\frac{1}{2}%
\left( t^{2H}+s^{2H}-|t-s|^{2H}\right) ,\quad i,j=1,\dots ,d,
\end{equation*}%
where $\delta _{ij}$ is one, if $i=j$, or zero else.

In what follows we briefly pass in review the construction of the fractional
Brownian motion, which can be found in \cite{Nualart}. For convenience, we
confine ourselves to the case $d=1$.

Let $\mathcal{E}$ be the class of step functions on $[0,T]$ and denote by $%
\mathcal{H}$ the Hilbert space which obtained through the completion of $%
\mathcal{E}$ with respect to the inner product 
\begin{equation*}
\langle 1_{[0,t]},1_{[0,s]}\rangle _{\mathcal{H}}=R_{H}(t,s).
\end{equation*}%
The latter gives an extension of the mapping $1_{[0,t]}\mapsto B_{t}$ to an
isometry between $\mathcal{H}$ and a Gaussian subspace of $L^{2}(\Omega )$
with respect to $B^{H}$. Let $\varphi \mapsto B^{H}(\varphi )$ be this
isometry.

If $H<1/2$, one verifies that the covariance function $R_{H}(t,s)$ can be
represented as

\bigskip\ 
\begin{equation}
R_{H}(t,s)=\int_{0}^{t\wedge s}K_{H}(t,u)K_{H}(s,u)du,  \label{2.2}
\end{equation}%
where 
\begin{equation}
K_{H}(t,s)=c_{H}\left[ \left( \frac{t}{s}\right) ^{H-\frac{1}{2}}(t-s)^{H-%
\frac{1}{2}}+\left( \frac{1}{2}-H\right) s^{\frac{1}{2}-H}\int_{s}^{t}u^{H-%
\frac{3}{2}}(u-s)^{H-\frac{1}{2}}du\right] .  \label{KH}
\end{equation}%
Here $c_{H}=\sqrt{\frac{2H}{(1-2H)\beta (1-2H,H+1/2)}}$ and $\beta $ is the
Beta function. See \cite[Proposition 5.1.3]{Nualart}.

Using the kernel $K_{H}$, one can define via (\ref{2.2}) an isometry $%
K_{H}^{\ast }$ between $\mathcal{E}$ and $L^{2}([0,T])$ such that $%
(K_{H}^{\ast }1_{[0,t]})(s)=K_{H}(t,s)1_{[0,t]}(s).$ This isometry admits
for an extension to the Hilbert space $\mathcal{H}$, which has the following
representations in terms of fractional derivatives

\begin{equation*}
(K_{H}^{\ast }\varphi )(s)=c_{H}\Gamma \left( H+\frac{1}{2}\right) s^{\frac{1%
}{2}-H}\left( D_{T^{-}}^{\frac{1}{2}-H}u^{H-\frac{1}{2}}\varphi (u)\right)
(s)
\end{equation*}%
and 
\begin{align*}
(K_{H}^{\ast }\varphi )(s)=& \,c_{H}\Gamma \left( H+\frac{1}{2}\right)
\left( D_{T^{-}}^{\frac{1}{2}-H}\varphi (s)\right) (s) \\
& +c_{H}\left( \frac{1}{2}-H\right) \int_{s}^{T}\varphi (t)(t-s)^{H-\frac{3}{%
2}}\left( 1-\left( \frac{t}{s}\right) ^{H-\frac{1}{2}}\right) dt.
\end{align*}%
for $\varphi \in \mathcal{H}$. One can also show that $\mathcal{H}%
=I_{T^{-}}^{\frac{1}{2}-H}(L^{2})$. See \cite{DU} and \cite[Proposition 6]%
{alos.mazet.nualart.01}.

We know that $K_{H}^{\ast }$ is an isometry from $\mathcal{H}$ into $%
L^{2}([0,T])$. Hence, the $d$-dimensional process $W=\{W_{t},t\in \lbrack
0,T]\}$ defined by 
\begin{equation}
W_{t}:=B^{H}((K_{H}^{\ast })^{-1}(1_{[0,t]}))  \label{WBH}
\end{equation}%
is a Wiener process and the process $B^{H}$ has the representation 
\begin{equation}
B_{t}^{H}=\int_{0}^{t}K_{H}(t,s)dW_{s}.  \label{BHW}
\end{equation}%
See \cite{alos.mazet.nualart.01}.

In the sequel, we also need the Definition of a fractional Brownian motion
with respect to a filtration.

\begin{definition}
Let $\mathcal{G}=\left\{ \mathcal{G}_{t}\right\} _{t\in \left[ 0,T\right] }$
be a filtration on $\left( \Omega ,\mathcal{F},P\right) $ satisfying the
usual conditions. A fractional Brownian motion $B^{H}$ is called a $\mathcal{%
G}$-fractional Brownian motion if the process $W$ defined by (\ref{WBH}) is
a $\mathcal{G}$-Brownian motion.
\end{definition}

\bigskip

Let $W$ be a standard Wiener process on a filtered probability space $%
(\Omega ,\mathfrak{A},P),\{\mathcal{F}_{t}\}_{t\in \lbrack 0,T]},$ where $%
\mathcal{F}=\{\mathcal{F}_{t}\}_{t\in \lbrack 0,T]}$ is the natural
filtration generated by $W$ and augmented by all $P$-null sets. Denote by $%
B:=B^{H}$ the fractional Brownian motion with Hurst parameter $H\in (0,1/2)$
as in (\ref{BHW}).

We shall also apply a version of Girsanov's theorem for fractional Brownian
motion which can be found in \cite[Theorem 4.9]{DU}. See also \cite[Theorem 2%
]{NO}. This version relies on the folloing definition of an isomorphism $%
K_{H}$ from $L^{2}([0,T])$ onto $I_{0+}^{H+\frac{1}{2}}(L^{2})$ with respect
to the kernel $K_{H}(t,s)$ based on fractional integrals (see \cite[Theorem
2.1]{DU}): 
\begin{equation*}
(K_{H}\varphi )(s)=I_{0^{+}}^{2H}s^{\frac{1}{2}-H}I_{0^{+}}^{\frac{1}{2}%
-H}s^{H-\frac{1}{2}}\varphi ,\quad \varphi \in L^{2}([0,T]).
\end{equation*}

The latter combined with the properties of the Riemann-Liouville fractional
integrals and derivatives can be used to prove the following representation
of inverse of $K_{H}$ : 
\begin{equation}
(K_{H}^{-1}\varphi )(s)=s^{\frac{1}{2}-H}D_{0^{+}}^{\frac{1}{2}-H}s^{H-\frac{%
1}{2}}D_{0^{+}}^{2H}\varphi (s),\quad \varphi \in I_{0+}^{H+\frac{1}{2}%
}(L^{2}).  \label{KHminus}
\end{equation}

Using this we find for absolutely continuous functions $\varphi $ (see \cite%
{NO}) that 
\begin{equation*}
(K_{H}^{-1}\varphi )(s)=s^{H-\frac{1}{2}}I_{0^{+}}^{\frac{1}{2}-H}s^{\frac{1%
}{2}-H}\varphi ^{\prime }(s).
\end{equation*}

\begin{theorem}[Girsanov's theorem for fBm]
\label{girsanov} Let $u=\{u_{t},t\in \lbrack 0,T]\}$ be an $\mathcal{F}$%
-adapted process with integrable trajectories and set $\widetilde{B}%
_{t}^{H}=B_{t}^{H}+\int_{0}^{t}u_{s}ds,\quad t\in \lbrack 0,T].$ Suppose that

\begin{itemize}
\item[(i)] $\int_{0}^{\cdot }u_{s}ds\in I_{0+}^{H+\frac{1}{2}}(L^{2}([0,T]))$%
, $P$-a.s.

\item[(ii)] $E[\xi_T]=1$ where 
\begin{equation*}
\xi_T := \exp\left\{-\int_0^T K_H^{-1}\left( \int_0^{\cdot} u_r
dr\right)(s)dW_s - \frac{1}{2} \int_0^T K_H^{-1} \left( \int_0^{\cdot} u_r
dr \right)^2(s)ds \right\}.
\end{equation*}
\end{itemize}

Then the shifted process $\widetilde{B}^H$ is an $\mathcal{F}$-fractional
Brownian motion with Hurst parameter $H$ under the new probability $%
\widetilde{P}$ defined by $\frac{d\widetilde{P}}{dP}=\xi_T$.
\end{theorem}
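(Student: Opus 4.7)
The plan is to reduce the statement to the classical Girsanov theorem for the underlying Wiener process $W$ via the isomorphism $K_H$, using the representation $B^H_t=\int_0^t K_H(t,s)\,dW_s$ stated in (\ref{BHW}). The idea is that if we can rewrite the drift term $\int_0^t u_s\,ds$ as $\int_0^t K_H(t,s)\varphi(s)\,ds$ for some adapted $\varphi\in L^2([0,T])$, then $\widetilde B^H_t=\int_0^t K_H(t,s)(dW_s+\varphi(s)\,ds)$, and classical Girsanov applied to $W$ with drift $\varphi$ will produce a $\widetilde P$-Wiener process $\widetilde W$ relative to which $\widetilde B^H$ is the $K_H$-transform and hence an $\mathcal F$-fBm with Hurst parameter $H$.

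Concretely, the first step is to invoke condition (i). Since $K_H$ is an isomorphism from $L^2([0,T])$ onto $I_{0^+}^{H+1/2}(L^2)$ (with explicit inverse given in (\ref{KHminus})), the hypothesis $\int_0^\cdot u_s\,ds\in I_{0^+}^{H+1/2}(L^2([0,T]))$ $P$-a.s.\ means that
\[
\varphi(s):=K_H^{-1}\!\left(\int_0^\cdot u_r\,dr\right)\!(s)
\]
is a well-defined element of $L^2([0,T])$ $P$-a.s., and $\int_0^t u_s\,ds=\int_0^t K_H(t,s)\varphi(s)\,ds$ for all $t\in[0,T]$. The explicit fractional-derivative form of $K_H^{-1}$ on absolutely continuous functions shows that $\varphi$ is $\mathcal F$-adapted because $u$ is. Combining this with (\ref{BHW}) yields
\[
\widetilde B^H_t \;=\; \int_0^t K_H(t,s)\bigl(dW_s+\varphi(s)\,ds\bigr), \qquad 0\le t\le T.
\]

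The second step is the classical Girsanov theorem applied to the Wiener process $W$ with drift $\varphi$. The density appearing in (ii) is precisely the Doléans–Dade exponential
\[
\xi_T=\exp\!\left\{-\int_0^T\varphi(s)\,dW_s-\tfrac12\int_0^T\varphi(s)^2\,ds\right\},
\]
so hypothesis (ii), $E[\xi_T]=1$, is exactly the statement that $(\xi_t)_{0\le t\le T}$ is a true $\mathcal F$-martingale (not merely a supermartingale). Classical Girsanov therefore ensures that under the measure $\widetilde P$ defined by $d\widetilde P/dP=\xi_T$, the process
\[
\widetilde W_t := W_t+\int_0^t\varphi(s)\,ds, \qquad 0\le t\le T,
\]
is a standard $\mathcal F$-Brownian motion.

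Finally, substituting into the displayed expression for $\widetilde B^H$ gives $\widetilde B^H_t=\int_0^t K_H(t,s)\,d\widetilde W_s$ under $\widetilde P$. By the representation (\ref{BHW}) applied in reverse, this is a fractional Brownian motion with Hurst parameter $H$, and since $\widetilde W$ is an $\mathcal F$-Wiener process, $\widetilde B^H$ is an $\mathcal F$-fBm in the sense of the Definition preceding the theorem. The main obstacle is the passage through the $K_H^{-1}$ transformation in the first step: one must check that $\varphi$ is both $L^2([0,T])$-valued $P$-a.s.\ and $\mathcal F$-adapted, which is where condition (i) and the explicit Riemann–Liouville form of $K_H^{-1}$ do the real work; everything else is a direct application of standard Wiener Girsanov together with the stochastic Fubini-type identity that moves $K_H$ outside the drift.
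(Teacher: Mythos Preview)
Your argument is correct and is precisely the standard reduction to classical Girsanov via the isomorphism $K_H$. Note, however, that the paper does not supply its own proof of this theorem: it is quoted in the Appendix as a known result, with explicit references to \cite[Theorem 4.9]{DU} and \cite[Theorem 2]{NO}. The proof you sketch is essentially the one found in those sources, so there is nothing to compare against within the paper itself.
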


\begin{remark}
In the the multi-dimensional case, we define 
\begin{equation*}
(K_{H}\varphi )(s):=((K_{H}\varphi ^{(1)})(s),\dots ,(K_{H}\varphi
^{(d)})(s))^{\ast },\quad \varphi \in L^{2}([0,T];\mathbb{R}^{d}),
\end{equation*}%
where $\ast $ denotes transposition. Similarly for $K_{H}^{-1}$ and $%
K_{H}^{\ast }$.
\end{remark}

In this Appendix we also recapitulate an integration by parts formula from 
\cite{BNP}, which is based on a sort of local time of the Gaussian process $%
B^{H}$.

Let $m\in \mathbb{N}$ and let $f:[0,T]^{m}\times (\mathbb{R}%
^{d})^{m}\rightarrow \mathbb{R}$ be a function defined by 
\begin{equation*}
f(s,z)=\prod_{j=1}^{m}f_{j}(s_{j},z_{j}),\quad s=(s_{1},\dots ,s_{m})\in
\lbrack 0,T]^{m},\quad z=(z_{1},\dots ,z_{m})\in (\mathbb{R}^{d})^{m},
\end{equation*}%
where $f_{j}:[0,T]\times \mathbb{R}^{d}\rightarrow \mathbb{R}$, $j=1,\dots
,m $ are (spatially) smooth functions with compact support. Further, let $%
\varkappa :[0,T]^{m}\rightarrow \mathbb{R}$ be a function given by 
\begin{equation*}
\varkappa (s)=\prod_{j=1}^{m}\varkappa _{j}(s_{j}),\quad s\in \lbrack
0,T]^{m},
\end{equation*}%
where $\varkappa _{j}:[0,T]\rightarrow \mathbb{R}$, $j=1,\dots ,m$ are
integrable.

Further, denote by $\alpha _{j}$ a multiindex and $D^{\alpha _{j}}$ its
corresponding differential operator. Let $\alpha =(\alpha _{1},\dots ,\alpha
_{m})\in \mathbb{N}_{0}^{d\times m}$. Then define $|\alpha
|=\sum_{j=1}^{m}\sum_{l=1}^{d}\alpha _{j}^{(l)}$ and 
\begin{equation*}
D^{\alpha }f(s,z)=\prod_{j=1}^{m}D^{\alpha _{j}}f_{j}(s_{j},z_{j}).
\end{equation*}

In \cite{BNP} the following integration by parts formula was shown%
\begin{equation}
\int_{\Delta _{\theta ,t}^{m}}D^{\alpha }f(s,B_{s}^{H})ds=\int_{\left( 
\mathbb{R}^{d}\right) ^{m}}\Lambda _{\alpha }^{f}(\theta ,t,z)dz,
\label{ibp}
\end{equation}%
where $\Lambda _{\alpha }^{f}$ is a suitable random field, $\Delta _{\theta
,t}^{m}$ is the $m$-dimensional simplex (see \ref{Simplex}) and $%
B_{s}^{H}:=(B_{s_{1}}^{H},...,B_{s_{m}}^{H})$ is a fractional Brownian on
that simplex. More specifically, we have that 
\begin{equation}
\Lambda _{\alpha }^{f}(\theta ,t,z)=(2\pi )^{-dm}\int_{(\mathbb{R}%
^{d})^{m}}\int_{\Delta _{\theta
,t}^{m}}\prod_{j=1}^{m}f_{j}(s_{j},z_{j})(-iu_{j})^{\alpha _{j}}\exp
\{-i\langle u_{j},B_{s_{j}}^{H}-z_{j}\rangle \}dsdu.  \label{LambdaDef}
\end{equation}

It turns out that the ramdom field $\Lambda _{\alpha }^{f}(\theta ,t,z)$ is
a well-defined element of $L^{2}(\Omega )$.

In this paper, we also need the notion of \emph{shuffle permutations}: Let $%
m $ and $n$ be integers. Denote by $S(m,n)$ the set of shuffle permutations,
that is the set of permutations $\sigma :\{1,\dots ,m+n\}\rightarrow
\{1,\dots ,m+n\}$ such that $\sigma (1)<\dots <\sigma (m)$ and $\sigma
(m+1)<\dots <\sigma (m+n)$.

We introduce the following notation: Given $(s,z)=(s_{1},\dots
,s_{m},z_{1}\dots ,z_{m})\in \lbrack 0,T]^{m}\times (\mathbb{R}^{d})^{m}$
and a shuffle $\sigma \in S(m,m)$ we write 
\begin{equation*}
f_{\sigma }(s,z):=\prod_{j=1}^{2m}f_{[\sigma (j)]}(s_{j},z_{[\sigma (j)]})
\end{equation*}%
and 
\begin{equation*}
\varkappa _{\sigma }(s):=\prod_{j=1}^{2m}\varkappa _{\lbrack \sigma
(j)]}(s_{j}),
\end{equation*}%
where $[j]$ is equal to $j$ if $1\leq j\leq m$ and $j-m$ if $m+1\leq j\leq
2m $.

Define the expressions 
\begin{eqnarray*}
&&\Psi _{k}^{f}(\theta ,t,z) \\
&:&=\prod_{l=1}^{d}\sqrt{(2\left\vert \alpha ^{(l)}\right\vert )!}%
\sum_{\sigma \in S(m,m)}\int_{\Delta _{0,t}^{2m}}\left\vert f_{\sigma
}(s,z)\right\vert \prod_{j=1}^{2m}\frac{1}{\left\vert
s_{j}-s_{j-1}\right\vert ^{H(d+2\sum_{l=1}^{d}\alpha _{\lbrack \sigma
(j)]}^{(l)})}}ds_{1}...ds_{2m},
\end{eqnarray*}

\begin{eqnarray*}
&&\Psi _{k}^{\varkappa }(\theta ,t) \\
&:&=\prod_{l=1}^{d}\sqrt{(2\left\vert \alpha ^{(l)}\right\vert )!}%
\sum_{\sigma \in S(m,m)}\int_{\Delta _{0,t}^{2m}}\left\vert \varkappa
_{\sigma }(s)\right\vert \prod_{j=1}^{2m}\frac{1}{\left\vert
s_{j}-s_{j-1}\right\vert ^{H(d+2\sum_{l=1}^{d}\alpha _{\lbrack \sigma
(j)]}^{(l)})}}ds_{1}...ds_{2m}.
\end{eqnarray*}

\bigskip

\begin{theorem}
\bigskip \label{mainestimate}Suppose that $\Psi _{k}^{f}(\theta ,t,z),\Psi
_{k}^{\varkappa }(\theta ,t)<\infty $. Then, $\Lambda _{\alpha }^{f}(\theta
,t,z)$ given by (\ref{LambdaDef}) is a random variable in $L^{2}(\Omega )$
and there exists a universal constant $C=C(T;H,d)>0$ such that%
\begin{equation*}
E\left[ \left\vert \Lambda _{\alpha }^{f}(\theta ,t,z)\right\vert ^{2}\right]
\leq C^{m+\left\vert \alpha \right\vert }\Psi _{k}^{f}(\theta ,t,z)\text{.}
\end{equation*}%
Moreover, we have%
\begin{eqnarray*}
&&\left\vert E\left[ \int_{(\mathbb{R}^{d})^{m}}\Lambda _{\alpha
}^{f}(\theta ,t,z)dz\right] \right\vert \\
&\leq &C^{m/2+\left\vert \alpha \right\vert
/2}\dprod\limits_{j=1}^{m}\left\Vert f_{j}\right\Vert _{L^{1}(\mathbb{R}%
^{d};L^{\infty }(\left[ 0,T\right] ))}(\Psi _{k}^{\varkappa }(\theta
,t))^{1/2}.
\end{eqnarray*}
\end{theorem}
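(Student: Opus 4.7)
The proof will rest on three tools already signalled in the paper: the Fourier representation (\ref{LambdaDef}) of $\Lambda_\alpha^f$, the strong local non-determinism (LND) of the fractional Brownian motion (invoked earlier in the proof of Lemma \ref{Exp}), and a shuffle-permutation symmetrisation that rewrites a product of two ordered time simplices as a sum indexed by $\sigma\in S(m,m)$ of a single integral over $\Delta_{0,t}^{2m}$. The overall strategy is to square (or pair with a dual copy), take expectations so that the random factor collapses to a Gaussian characteristic function, shuffle, apply LND to bound the quadratic form in the exponent from below, and finally integrate out the Fourier variables against the polynomial weight $\prod_j|u_j|^{|\alpha_j|}$ using Gaussian moment identities.

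For the first inequality I would expand $E[|\Lambda_\alpha^f(\theta,t,z)|^2]$ as a double oscillatory integral in variables $(s,u),(s',u')$ with $s,s'\in\Delta_{\theta,t}^m$. The random factor reduces to $\exp\{-\tfrac12\mathrm{Var}(\sum_j\langle u_j,B^H_{s_j}\rangle-\sum_j\langle u'_j,B^H_{s'_j}\rangle)\}$. Merging the $2m$ times into a single increasing vector and labelling each slot according to whether it originated from the primed or unprimed copy produces the shuffle decomposition, so the double simplex integral becomes $\sum_{\sigma\in S(m,m)}\int_{\Delta_{0,t}^{2m}}f_\sigma(s,z)\,[\cdots]\,ds$. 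Applying LND in the form $\mathrm{Var}(\sum_j v_j B^H_{s_j})\geq K_H\sum_j |v_j|^2(s_j-s_{j-1})^{2H}$, with $v$ the reindexed combination of $u,u'$, and computing the Gaussian moment $\int|v|^{2k}e^{-\lambda v^2}dv\sim\lambda^{-k-1/2}(2k-1)!!$ separately in each of the $d$ coordinates, produces exactly the singular kernel $\prod_j|s_j-s_{j-1}|^{-H(d+2\sum_l\alpha^{(l)}_{[\sigma(j)]})}$ and the factorial $\prod_l(2|\alpha^{(l)}|)!$ built into $\Psi_k^f$.

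For the second inequality I would combine the integration-by-parts identity (\ref{ibp}), which gives $E[\int\Lambda_\alpha^f\,dz]=E[\int_{\Delta_{\theta,t}^m}D^\alpha f(s,B^H_s)\,ds]$, with the spatial Fourier representation of each $D^{\alpha_j}f_j$. Pulling the expectation inside and using $|\hat f_j(s_j,u_j)|\leq\|f_j(s_j,\cdot)\|_{L^1(\mathbb{R}^d)}\leq\|f_j\|_{L^1(\mathbb{R}^d;L^\infty([0,T]))}$ factors out $\prod_j\|f_j\|_{L^1(L^\infty)}$ uniformly in $s$; a Cauchy--Schwarz step against the weight $\varkappa_\sigma$ then reduces the remaining $(s,u)$-integral to the square root of an expression of exactly the same type as in the first part but with $f_\sigma$ replaced by $\varkappa_\sigma$, which is precisely $\Psi_k^\varkappa(\theta,t)^{1/2}$.

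The principal obstacle I anticipate is the combinatorial bookkeeping in the shuffle step: after reordering the merged time variables one must identify the correct linear combination $v_j$ of $u,u'$ at each position and verify that LND plus the Gaussian-moment computation delivers exactly the exponent $H(d+2\sum_l\alpha^{(l)}_{[\sigma(j)]})$. The factorial $(2|\alpha^{(l)}|)!$ must also be tracked coordinate-wise in order not to inflate past the claimed bound; this forces one to apply Cauchy--Schwarz componentwise in the $d$ Fourier directions rather than globally. A minor but non-trivial secondary point is the Fubini justification for the oscillatory integrals when $|\alpha|$ is large, which is standard but should be done via a mollification of $f_j$ and a passage to the limit using the very bound being proved.
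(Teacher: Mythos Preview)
The paper does not actually prove Theorem~\ref{mainestimate}; it is stated in the Appendix as a result recalled from \cite{BNP} (see the sentence ``In this Appendix we also recapitulate an integration by parts formula from \cite{BNP}\ldots'' preceding the definitions of $\Lambda_\alpha^f$ and $\Psi_k^f,\Psi_k^\varkappa$). So there is no in-paper proof to compare against.

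That said, your sketch is an accurate reconstruction of how the argument in \cite{BNP} proceeds: squaring the oscillatory representation (\ref{LambdaDef}), collapsing the expectation to a Gaussian characteristic function, using the shuffle bijection to merge $\Delta_{\theta,t}^m\times\Delta_{\theta,t}^m$ into $\sum_{\sigma\in S(m,m)}\int_{\Delta_{0,t}^{2m}}$, and then applying strong local non-determinism followed by coordinate-wise Gaussian moment integration to produce the singular kernel $\prod_j|s_j-s_{j-1}|^{-H(d+2\sum_l\alpha^{(l)}_{[\sigma(j)]})}$ and the factor $\prod_l\sqrt{(2|\alpha^{(l)}|)!}$. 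Your identification of the delicate bookkeeping point (matching the reindexed Fourier variables $v_j$ to the correct multi-index $\alpha_{[\sigma(j)]}$ after the shuffle) is exactly the place where the proof requires care.

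One clarification on your treatment of the second inequality: the role of $\varkappa$ is not that of an auxiliary weight you choose for a Cauchy--Schwarz step. In the setting of \cite{BNP} the functions $\varkappa_j$ are given time-factors that appear alongside the $f_j$ in the applications (for instance $\varkappa_j(s)=K_H(s,\theta)$ in the Picard expansion (\ref{Picard})); after one bounds $|\widehat{f_j}(s_j,u_j)|\le\|f_j\|_{L^1(\mathbb{R}^d;L^\infty([0,T]))}$ uniformly in $s_j$, what survives in the simplex integral is exactly $|\varkappa_\sigma(s)|$, and the same LND\,+\,Gaussian-moment machinery as in the first part yields $(\Psi_k^\varkappa)^{1/2}$. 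No separate Cauchy--Schwarz against $\varkappa$ is needed --- the square root arises because the second estimate bounds a first moment rather than a second moment, so one is effectively taking the square root of the $\Psi_k$-type expression that controls the variance. With that adjustment your outline is correct.
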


\bigskip

We also need the following auxiliary result in connection with proof of
Theorem \ref{Bismut}

\begin{lemma}
\label{WeakConvergence}\bigskip\ Let $U\subset \mathbb{R}^{d}$ be an open
and bounded subset. Consider the sequence $X_{\cdot }^{x,n},n\geq 1$ in
Proposition \ref{ConvergenceX}. Then%
\begin{equation*}
\frac{\partial }{\partial x}X_{\cdot }^{\cdot ,n}\underset{n\longrightarrow
\infty }{\longrightarrow }\frac{\partial }{\partial x}X_{\cdot }^{\cdot }
\end{equation*}%
in $L^{2}([0,T]\times \Omega \times U)$ weakly.

\begin{proof}
This result is a consequence of relation (\ref{ConvergenceX}) and the
estimate (\ref{DerivativeEstimate}) in step 3 of the proof sketch of Theorem %
\ref{StrongSolution}.
\end{proof}
\end{lemma}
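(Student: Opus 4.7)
My plan is to exploit the Hilbert space structure of $L^2([0,T]\times\Omega\times U)$: first I would extract a weakly convergent subsequence from a uniform bound, then identify the limit by duality against a dense set of test functions, using the pointwise convergence \eqref{ConvergenceX} together with integration by parts in $x$. The uniform bound comes from \eqref{DerivativeEstimate}: since $U$ is bounded, $b_n\to b$ in $L_\infty^1$, and $L_{2,H,d,T}$ is continuous, we obtain
\[
\sup_{n\geq 1}\int_0^T\!\!\int_U E\!\left[\left\|\tfrac{\partial}{\partial x}X_t^{x,n}\right\|^2\right]dx\,dt \;\leq\; T|U|\,\sup_{n\geq 1}L_{2,H,d,T}(\|b_n\|_{L_\infty^1}) \;<\;\infty.
\]
By reflexivity of $L^2$, some subsequence $(n_k)$ converges weakly to a limit $Y\in L^2([0,T]\times\Omega\times U)$. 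Once $Y$ is identified as $\tfrac{\partial}{\partial x}X_\cdot^\cdot$, uniqueness of the weak limit will promote this to weak convergence of the full sequence.

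For the identification I would test against elementary tensors $\psi(t)\varphi(x)Z(\omega)$ with $\psi\in C_c^\infty((0,T))$, $\varphi\in C_c^\infty(U)$ and $Z\in L^\infty(\Omega)$, whose linear span is dense in $L^2([0,T]\times\Omega\times U)$. Because each $b_n$ is smooth, the flow $x\mapsto X_t^{x,n}$ is classically $C^1$ and the usual integration by parts in $x$ gives
\[
\int_U \tfrac{\partial}{\partial x}X_t^{x,n}\,\varphi(x)\,dx \;=\; -\int_U X_t^{x,n}\,\partial_x\varphi(x)\,dx.
\]
Multiplying by $\psi(t)Z(\omega)$, taking expectation and integrating in $t$ yields the tested quantity in the desired form. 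For each fixed $(t,x)$, \eqref{ConvergenceX} combined with Cauchy-Schwarz gives $E[X_t^{x,n}Z]\to E[X_t^{x}Z]$, and a locally uniform-in-$x$, uniform-in-$n$ bound on $\|X_t^{x,n}\|_{L^2(\mu)}$ (obtainable via Girsanov combined with Lemma \ref{Exp}, exactly as in step 2 of the proof of Theorem \ref{StrongSolution}) provides an integrable dominant on the compact set $\operatorname{supp}\varphi\subset U$. Dominated convergence in $(t,x)$ then transfers the identity to the limit, and since $x\mapsto X_t^x\in W^{1,p}(U)$ $\mu$-a.s.\ by Theorem \ref{StrongSolution}, a Fubini plus weak-derivative argument undoes the integration by parts to give
\[
\int_0^T\!\!\int_U\psi(t)\varphi(x)\,E\!\left[\tfrac{\partial}{\partial x}X_t^{x}\,Z\right]dx\,dt,
\]
which matches the weak limit of the left-hand side and forces $Y=\tfrac{\partial}{\partial x}X_\cdot^\cdot$.

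The main obstacle I expect is securing the dominant for the application of dominated convergence, since we must pass the limit inside a double integral in $(t,x)$: this requires uniform-in-$n$ $L^2(\mu)$ control of $X_t^{x,n}$ that is, moreover, locally uniform in $x$. This is precisely what Girsanov combined with Lemma \ref{Exp} delivers, once one notes that $\|b_n\|_{L_\infty^1}$ remains bounded along the smooth approximation. Beyond this step, the argument is a routine duality/density scheme, and the uniqueness of the weak limit dispenses with any need to work along further subsequences.
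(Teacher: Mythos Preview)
Your argument is correct and is precisely the detailed unpacking of the paper's one-line proof: the paper simply cites relation~(\ref{ConvergenceX}) and the uniform derivative bound~(\ref{DerivativeEstimate}), which are exactly the two ingredients you use---the bound for weak compactness, and the $L^2$-convergence of $X_t^{x,n}$ combined with integration by parts in $x$ to identify the limit against a dense family of test functions. The only content beyond what the paper states explicitly is your care in supplying the dominant for the $(t,x)$-integral, which is indeed routine via Girsanov and Lemma~\ref{Exp} as you indicate.
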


\bigskip

\bigskip

\begin{proof}[Proof of Theorem \protect\ref{Bismut}]
Suppose that $\Phi \in C_{c}^{\infty }(\mathbb{R}^{d})$ and let $b_{n}\in
C_{c}^{\infty }((0,T)\times \mathbb{R}^{d})$ be a sequence of functions such
that 
\begin{equation*}
b_{n}\underset{n\longrightarrow \infty }{\longrightarrow }b\text{ in }%
L_{\infty }^{1}\text{.}
\end{equation*}%
Let $X_{\cdot }^{s,x,n}$ be the unique strong solution to 
\begin{equation*}
dX_{t}^{s,x,n}=b_{n}(t,X_{t}^{s,x,n})dt+dB_{t}^{H},X_{s}^{s,x,n}=x,s\leq
t\leq T
\end{equation*}%
for all $n$. Since $b_{n}$ is smooth and compactly supported,\ we know (see
e.g.\cite{Kunita}) that there exists a $\Omega ^{\ast }$ with $\mu (\Omega
^{\ast })=1$ such that for all $\omega \in \Omega ^{\ast },0\leq s\leq t\leq
T$%
\begin{equation*}
(x\mapsto X_{t}^{s,x,n}(\omega ))\in C^{\infty }(\mathbb{R}^{d}).
\end{equation*}%
Hence dominated convergence implies that%
\begin{equation*}
\frac{\partial }{\partial x}E[\Phi (X_{T}^{x,n})]=E[\Phi ^{\shortmid
}(X_{T}^{x,n})\frac{\partial }{\partial x}X_{T}^{x,n}]\text{.}
\end{equation*}%
Here $\Phi ^{\shortmid }$ denotes the derivative of $\Phi $ and $%
X_{t}^{x,n}=X_{t}^{0,x,n}$. Further, we obtain for all $0\leq s\leq t\leq
T,x\in U$ that%
\begin{equation*}
X_{t}^{x,n}=X_{t}^{s,X_{s}^{x,n},n}\text{ a.e.}
\end{equation*}%
Thus%
\begin{equation*}
\frac{\partial }{\partial x}E[\Phi (X_{T}^{x,n})]=E[\Phi ^{\shortmid
}(X_{T}^{x,n})\frac{\partial }{\partial x}X_{T}^{s,X_{s}^{x,n},n}\frac{%
\partial }{\partial x}X_{s}^{x,n}].
\end{equation*}%
It is known that the Malliavin derivative $D_{\cdot }^{H}X_{t}^{s,x,n}$ of $%
X_{t}^{s,x,n}$ in the direction of $B_{\cdot }^{H}$ exists (see e.g. \cite{N}%
) and that%
\begin{equation*}
D_{u}^{H}X_{t}^{s,x,n}=\int_{u}^{t}b_{n}^{\shortmid
}(t,X_{r}^{s,x,n})D_{u}^{H}X_{r}^{s,x,n}dr+\chi _{_{(s,t]}(u)}I_{d\times d},
\end{equation*}%
where $I_{d\times d}$ is the identity matrix. We also observe that $\frac{%
\partial }{\partial x}X_{\cdot }^{u,X_{u}^{x,n},n}$ is a solution of the
latter equation for $s=0$. So we conclude that%
\begin{equation*}
D_{u}^{H}X_{t}^{x,n}=\frac{\partial }{\partial x}X_{t}^{u,X_{u}^{x,n},n}
\end{equation*}%
a.e., which yields%
\begin{equation*}
\frac{\partial }{\partial x}E[\Phi (X_{T}^{x,n})]=E[\Phi ^{\shortmid
}(X_{T}^{x,n})D_{s}^{H}X_{T}^{x,n}\frac{\partial }{\partial x}X_{s}^{x,n}].
\end{equation*}%
Choose $\varphi \in C_{c}^{\infty }(U)$. Then, we see that%
\begin{equation*}
-\int_{U}E[\Phi (X_{T}^{x,n})]\frac{\partial }{\partial x}\varphi
(x)dx=\int_{U}\varphi (x)E[\Phi ^{\shortmid
}(X_{T}^{x,n})D_{s}^{H}X_{T}^{x,n}\frac{\partial }{\partial x}X_{s}^{x,n}]dx.
\end{equation*}%
Since the function $a$ sums up to one, we can then use the chain rule for $%
D_{\cdot }^{H}$ (see \cite{Nualart}) and get that%
\begin{eqnarray*}
&&-\int_{U}E[\Phi (X_{T}^{x,n})]\frac{\partial }{\partial x}\varphi (x)dx \\
&=&\int_{U}\varphi (x)E[\int_{0}^{T}\{a(s)\Phi ^{\shortmid
}(X_{T}^{x,n})D_{s}^{H}X_{T}^{x,n}\frac{\partial }{\partial x}%
X_{s}^{x,n}\}ds]dx \\
&=&\int_{U}\varphi (x)E[\int_{0}^{T}\{a(s)D_{s}^{H}\Phi (X_{T}^{x,n})\frac{%
\partial }{\partial x}X_{s}^{x,n}\}ds]dx
\end{eqnarray*}

Further, we know from Proposition 5.2.1 and p. 285 in \cite{Nualart} that%
\begin{equation*}
D_{s}^{H}\Phi (X_{T}^{x,n})=Cs^{\frac{1}{2}-H}(\int_{s}^{T}(u-s)^{-H-\frac{1%
}{2}}u^{H-\frac{1}{2}}D_{u}\Phi (X_{T}^{x,n})du.
\end{equation*}%
for a constant $C$ depending on $H$. Therefore, using substitution (first
for $u$ substituted by $u+s$ in the above relation and then for $s$ by $s-u$
in the next step), Fubini's theorem and the duality formula with respect to
the Malliavin derivative $D_{\cdot }$ we obtain that%
\begin{eqnarray*}
&&-\int_{U}E[\Phi (X_{T}^{x,n})]\frac{\partial }{\partial x}\varphi (x)dx \\
&=&C\int_{U}\varphi (x)E[\int_{0}^{T}\{a(s)Cs^{\frac{1}{2}-H} \\
&&\times (\int_{s}^{T}(u-s)^{-H-\frac{1}{2}}u^{H-\frac{1}{2}}D_{u}\Phi
(X_{T}^{x,n})du)\frac{\partial }{\partial x}X_{s}^{x,n}\}ds]dx \\
&=&C\int_{U}\varphi (x)E[\int_{0}^{T}u^{-H-\frac{1}{2}} \\
&&\times \int_{u}^{T}a(s-u)(s-u)^{\frac{1}{2}-H}s^{H-\frac{1}{2}}D_{s}\Phi
(X_{T}^{x,n})\frac{\partial }{\partial x}X_{s-u}^{x,n}dsdu]dx \\
&=&C\int_{U}\varphi (x)E[\Phi (X_{T}^{x,n}) \\
&&\times \int_{0}^{T}u^{-H-\frac{1}{2}}\int_{u}^{T}a(s-u)(s-u)^{\frac{1}{2}%
-H}s^{H-\frac{1}{2}}\left( \frac{\partial }{\partial x}X_{s-u}^{x,n}\right)
^{\ast }dB_{s}du]^{\ast }dx \\
&=&I_{1}(n)+I_{2}(n),
\end{eqnarray*}%
where%
\begin{eqnarray*}
I_{1}(n) &:&=C\int_{U}\varphi (x)E[(\Phi (X_{T}^{x,n})-\Phi (X_{T}^{x})) \\
&&\times \int_{0}^{T}u^{-H-\frac{1}{2}}\int_{u}^{T}a(s-u)(s-u)^{\frac{1}{2}%
-H}s^{H-\frac{1}{2}}\left( \frac{\partial }{\partial x}X_{s-u}^{x,n}\right)
^{\ast }dB_{s}du]^{\ast }dx
\end{eqnarray*}%
and%
\begin{eqnarray*}
I_{2}(n) &:&=C\int_{U}\varphi (x)E[\Phi (X_{T}^{x})\int_{0}^{T}u^{-H-\frac{1%
}{2}} \\
&&\times \int_{u}^{T}a(s-u)(s-u)^{\frac{1}{2}-H}s^{H-\frac{1}{2}}\left( 
\frac{\partial }{\partial x}X_{s-u}^{x,n}\right) ^{\ast }dB_{s}du]^{\ast }dx
\\
&=&C\int_{U}\varphi (x)E[\Phi (X_{T}^{x})\int_{0}^{T}u^{-H-\frac{1}{2}} \\
&&\times \int_{u}^{T}a(s-u)(s-u)^{\frac{1}{2}-H}s^{H-\frac{1}{2}}\left( 
\frac{\partial }{\partial x}X_{s-u}^{x}\right) ^{\ast }dB_{s}du]^{\ast }dx \\
&&+I_{3}(n),
\end{eqnarray*}%
where%
\begin{eqnarray*}
&&I_{3}(n) \\
&=&C\int_{U}\varphi (x)E[\Phi (X_{T}^{x})\int_{0}^{T}u^{-H-\frac{1}{2}%
}\int_{u}^{T}a(s-u)(s-u)^{\frac{1}{2}-H}s^{H-\frac{1}{2}} \\
&&\times \{\left( \frac{\partial }{\partial x}X_{s-u}^{x,n}\right) ^{\ast
}-\left( \frac{\partial }{\partial x}X_{s-u}^{x}\right) ^{\ast
}\}dB_{s}du]^{\ast }dx.
\end{eqnarray*}%
By applying Fubini's theorem, H\"{o}lder's inequality, the It\^{o} isometry,
the estimate (\ref{DerivativeEstimate}), the relation (\ref{ConvergenceX})
in step 3 of the proof sketch of Theorem \ref{StrongSolution} and dominated
convergence that%
\begin{eqnarray*}
&&\left\Vert I_{1}(n)\right\Vert \\
&\leq &\left\Vert \varphi \right\Vert _{\infty }\int_{U}(E[\left\vert \Phi
(X_{T}^{x,n})-\Phi (X_{T}^{x})\right\vert ^{2}])^{1/2} \\
&&\times (\int_{0}^{T}s^{2H-1}E[(\int_{0}^{s}u^{-H-\frac{1}{2}}\left\vert
a(s-u)\right\vert (s-u)^{\frac{1}{2}-H}\left\Vert \frac{\partial }{\partial x%
}X_{s-u}^{x,n}\right\Vert du)^{2}]ds)^{1/2}dx \\
&\leq &\left\Vert \varphi \right\Vert _{\infty }\int_{U}(E[\left\vert \Phi
(X_{T}^{x,n})-\Phi (X_{T}^{x})\right\vert ^{2}])^{1/2}(\int_{0}^{T}s^{2H-1}
\\
&&\times \int_{0}^{s}\int_{0}^{s}u_{1}^{-H-\frac{1}{2}}\left\vert
a(s-u_{1})\right\vert (s-u_{1})^{\frac{1}{2}-H}u_{2}^{-H-\frac{1}{2}%
}\left\vert a(s-u_{2})\right\vert (s-u_{2})^{\frac{1}{2}-H} \\
&&\times E[\left\Vert \frac{\partial }{\partial x}X_{s-u_{1}}^{x,n}\right%
\Vert ^{2}]^{1/2}E[\left\Vert \frac{\partial }{\partial x}%
X_{s-u_{2}}^{x,n}\right\Vert ^{2}]^{1/2}du_{1}du_{2}ds)^{1/2}dx \\
&=&\left\Vert \varphi \right\Vert _{\infty }\int_{U}(E[\left\vert \Phi
(X_{T}^{x,n})-\Phi (X_{T}^{x})\right\vert ^{2}])^{1/2}(\int_{0}^{T}s^{2H-1}
\\
&&\times (\int_{0}^{s}u^{-H-\frac{1}{2}}\left\vert a(s-u)\right\vert (s-u)^{%
\frac{1}{2}-H}E[\left\Vert \frac{\partial }{\partial x}X_{s-u}^{x,n}\right%
\Vert ^{2}]^{1/2}du)^{2}ds)^{1/2}dx \\
&\leq &\left\Vert \varphi \right\Vert _{\infty }\int_{U}(E[\left\vert \Phi
(X_{T}^{x,n})-\Phi (X_{T}^{x})\right\vert ^{2}])^{1/2}dx(\int_{0}^{T}s^{2H-1}
\\
&&\times \sup_{n\geq 1}L_{2,H,d,T}(\left\Vert b_{n}\right\Vert _{L_{\infty
}^{1}})^{1/4}(\int_{0}^{s}u^{-H-\frac{1}{2}}\left\vert a(s-u)\right\vert
(s-u)^{\frac{1}{2}-H}du)^{2}ds)^{1/2} \\
&\leq &C\left\Vert \varphi \right\Vert _{\infty }\int_{U}(E[\left\vert \Phi
(X_{T}^{x,n})-\Phi (X_{T}^{x})\right\vert ^{2}])^{1/2}dx(\int_{0}^{T}s^{H-%
\frac{1}{2}}ds)^{1/2} \\
&&\underset{n\longrightarrow \infty }{\longrightarrow }0,
\end{eqnarray*}%
where used the boundedness of the function $a$ in the last estimate.

Using the Clark-Ocone formula (see e.g. \cite{Nualart}) combined with the It%
\^{o} isometry and the chain rule for Malliavin derivatives, we find that%
\begin{eqnarray*}
&&I_{3}(n) \\
&:&=C\int_{U}\varphi (x)E[E[\Phi (X_{T}^{x})]\int_{0}^{T}u^{-H-\frac{1}{2}%
}\int_{u}^{T}a(s-u)(s-u)^{\frac{1}{2}-H}s^{H-\frac{1}{2}} \\
&&\times \{\left( \frac{\partial }{\partial x}X_{s-u}^{x,n}\right) ^{\ast
}-\left( \frac{\partial }{\partial x}X_{s-u}^{x}\right) ^{\ast
}\}dB_{s}du]^{\ast }dx \\
&&+C\int_{U}\varphi (x)E[\int_{0}^{T}u^{-H-\frac{1}{2}%
}\int_{u}^{T}a(s-u)(s-u)^{\frac{1}{2}-H}s^{H-\frac{1}{2}}D_{s}\Phi
(X_{T}^{x}) \\
&&\times \{\frac{\partial }{\partial x}X_{s-u}^{x,n}-\frac{\partial }{%
\partial x}X_{s-u}^{x}\}^{\ast }dsdu]^{\ast }dx \\
&=&C\int_{U}\varphi (x)E[\int_{0}^{T}u^{-H-\frac{1}{2}%
}\int_{u}^{T}a(s-u)(s-u)^{\frac{1}{2}-H}s^{H-\frac{1}{2}}\Phi ^{\shortmid
}(X_{T}^{x})D_{s}X_{T}^{x} \\
&&\times \{\frac{\partial }{\partial x}X_{s-u}^{x,n}-\frac{\partial }{%
\partial x}X_{s-u}^{x}\}^{\ast }dsdu]^{\ast }dx.
\end{eqnarray*}%
Then Lemma \ref{WeakConvergence} and dominated convergence combined with the
estimate (\ref{BoundedDerivatives}) give%
\begin{equation*}
\left\Vert I_{3}(n)\right\Vert \underset{n\longrightarrow \infty }{%
\longrightarrow }0.
\end{equation*}%
We also have that%
\begin{equation*}
-\int_{U}E[\Phi (X_{T}^{x,n})]\frac{\partial }{\partial x}\varphi (x)dx%
\underset{n\longrightarrow \infty }{\longrightarrow }\int_{U}E[\Phi
(X_{T}^{x})]\frac{\partial }{\partial x}\varphi (x)dx.
\end{equation*}%
Hence,%
\begin{eqnarray*}
&&-\int_{U}E[\Phi (X_{T}^{x})]\frac{\partial }{\partial x}\varphi (x)dx \\
&=&C\int_{U}\varphi (x)E[\Phi (X_{T}^{x}) \\
&&\times \int_{0}^{T}u^{-H-\frac{1}{2}}\int_{u}^{T}a(s-u)(s-u)^{\frac{1}{2}%
-H}s^{H-\frac{1}{2}}\left( \frac{\partial }{\partial x}X_{s-u}^{x}\right)
^{\ast }dB_{s}ds]^{\ast }dx
\end{eqnarray*}

Finally, using the monotone class theorem combined with dominated
convergence and the Cauchy-Schwarz inequality, we can show the above
relation for Borel measurable functions $\Phi :\mathbb{R}^{d}\longrightarrow 
\mathbb{R}$ such that 
\begin{equation*}
\Phi (X_{T}^{\cdot })\in L^{2}(\Omega \times U,\mu \times dx)\text{.}
\end{equation*}%
So the proof follows.
\end{proof}

\begin{lemma}
\label{L. 2.12}\bigskip Let $b\in C_{b}^{1}(\left[ 0,T\right] \times \mathbb{%
R}^{d};\mathbb{R}^{d})$ and $\left\Vert b\right\Vert _{\infty }\leq 1$.
Suppose that $H<\frac{1}{6}$. Then there exists a $C<\infty $ and a
sufficiently small $\alpha >0$ (which depend on $H,d,T$, but not on $b$)
such that for $0\leq s<t\leq T$:%
\begin{equation*}
E\left[ \exp (\frac{\alpha }{\left\vert t-s\right\vert ^{2(1-3H)}}\left\Vert
\int_{s}^{t}D_{x}b(u,B_{u}^{H})du\right\Vert ^{2})\right] <C,
\end{equation*}%
where $D_{x}$ is the Fr\'{e}chet derivative of $b$ with respect to the
spatial variable $x$.
\end{lemma}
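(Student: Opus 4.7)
The plan is to Taylor-expand the exponential and to bound the resulting moments of $Y_{s,t}:=\int_s^t D_x b(u,B_u^H)\,du$ via the integration-by-parts identity \eqref{ibp} together with the strong local nondeterminism of fractional Brownian motion (Lemma 4.1--4.2 of \cite{BLPP}). First I would write
\[
E\!\left[\exp\!\left(\tfrac{\alpha}{|t-s|^{2(1-3H)}}\|Y_{s,t}\|^2\right)\right]
=\sum_{m\ge 0}\frac{\alpha^m}{m!\,|t-s|^{2m(1-3H)}}\,E\!\left[\|Y_{s,t}\|^{2m}\right],
\]
and reduce the task to establishing a moment bound of the form
\[
E[\|Y_{s,t}\|^{2m}]\le C^m \|b\|_\infty^{2m}\,(m!)^{\beta}\,|t-s|^{2m(1-3H)}\qquad\text{with }\beta<1.
\]
Expanding the matrix Frobenius norm as $\|D_x b\|^2=\sum_{i,j}|\partial_{x_j}b_i|^2$ (with $d=1$ in the application of Proposition \ref{Stability}, so only a single scalar term is present), together with a standard multinomial/Cauchy--Schwarz reduction, this boils down to controlling $E\bigl[(\int_s^t \partial_{x_j}b_i(u,B_u^H)\,du)^{2m}\bigr]$ uniformly in $b$ with $\|b\|_\infty\le 1$.

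Next I would symmetrize the scalar moment as $(2m)!\int_{\Delta_{s,t}^{2m}}\prod_{k=1}^{2m}\partial_{x_j}b_i(u_k,B_{u_k}^H)\,du$ and invoke the identity \eqref{ibp} with $f_k(u,z)=b_i(u,z)$ and multi-indices $\alpha_k=e_j$, which gives the representation $(2m)!\,E[\int_{(\mathbb{R}^d)^{2m}}\Lambda_\alpha^f(s,t,z)\,dz]$. The crucial step is then to transfer the $2m$ spatial derivatives onto the joint Gaussian density $\phi_{Q(u)}$ of the vector $(B_{u_1}^H,\dots,B_{u_{2m}}^H)$ by integration by parts in $z$, producing
\[
E\!\left[\prod_k \partial_{x_j}b_i(u_k,B_{u_k}^H)\right]
=(-1)^{2m}\!\int \prod_k b_i(u_k,y_k)\,\partial^{\mathbf 1}_{y_1,\dots,y_{2m}}\phi_{Q(u)}(y)\,dy.
\]
Using $|b_i|\le \|b\|_\infty$ and standard Hermite-polynomial bounds for $\partial^{\mathbf 1}\phi_Q$ in $L^1$, combined with the strong local nondeterminism estimate $\det Q(u)\ge K(H)\prod_{k}|u_k-u_{k-1}|^{2H}$, the integrand can be controlled by $C^m\|b\|_\infty^{2m}\prod_{k}|u_k-u_{k-1}|^{-H(d+2)}$ up to combinatorial constants generated by the shuffle permutation counting of Lemma A.5 in \cite{BLPP}.

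Integration over $\Delta_{s,t}^{2m}$ then produces the Beta-function bound $\frac{\Gamma(1-H(d+2))^{2m}}{\Gamma(2m(1-H(d+2))+1)}|t-s|^{2m(1-H(d+2))}$, and an application of Stirling yields the target moment estimate with $\beta=2H(d+1)$. For $d=1$ and $H<1/6$ we have $1-H(d+2)=1-3H$ and $\beta=4H<2/3<1$, so substituting back gives
\[
E\!\left[\exp\!\left(\tfrac{\alpha}{|t-s|^{2(1-3H)}}\|Y_{s,t}\|^2\right)\right]
\le \sum_{m\ge 0}\frac{(\alpha C\|b\|_\infty^2)^m}{(m!)^{1-4H}},
\]
which is finite for all sufficiently small $\alpha=\alpha(H,d,T)>0$ and, crucially, uniformly bounded in $b$ with $\|b\|_\infty\le 1$ and in $0\le s<t\le T$. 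The principal obstacle will be controlling the Hermite-polynomial factors arising when the $2m$ derivatives are pushed onto $\phi_{Q(u)}$: the entries of $Q(u)^{-1}$ blow up like $|u_k-u_{k-1}|^{-2H}$ near the diagonal of the simplex, and one must verify that this singularity is exactly compensated by the local nondeterminism determinant bound and that the resulting factorial coming from the Hermite/shuffle count grows no faster than $(m!)^{2H(d+1)}$---this is the same delicate estimate that underlies the proof of Lemma \ref{Exp}.
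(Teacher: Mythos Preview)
The paper does not supply its own proof of this lemma; it is quoted from \cite{AMP} (see the Remark immediately following the statement). So strictly speaking there is no ``paper's proof'' to compare against, but the Remark does tell us what the proof in \cite{AMP} actually produces: a pathwise-type intermediate bound
\[
E\!\left[\exp\!\Big(\tfrac{\alpha}{|t-s|^{2(1-3H)}}\Big\|\int_s^t D_x b(u,B_u^H)\,du\Big\|^2\Big)\right]
\le E\!\left[\exp\!\big(\alpha\,C(H,d,T)\,\|b\|_\infty^2\,(1+\sup_{0\le l\le T}|B_l|)^2\big)\right],
\]
which is then finite for small $\alpha$ by Gaussian concentration. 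Your route---Taylor-expand the exponential and bound the $2m$-th moments by pushing the derivatives onto the joint Gaussian density via \eqref{ibp} and SLND---is a legitimate alternative that yields a constant bound directly, without the intermediate $\sup|B_l|$ factor. Both approaches rest on the same machinery (the local-time variational calculus of \cite{BNP},\cite{BLPP}).

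There is, however, a slip in your exponent. With singularity $\gamma=H(d+2)$ per increment, Stirling gives
\[
(2m)!\,\frac{\Gamma(1-\gamma)^{2m}}{\Gamma(1+2m(1-\gamma))}\asymp C^m(m!)^{2\gamma},
\]
so the factorial growth is $(m!)^{2H(d+2)}$, not $(m!)^{2H(d+1)}$. For $d=1$ this means $\beta=6H$, and the summability condition $\beta<1$ is exactly $H<1/6$---which is why that threshold appears in the statement. With your $\beta=4H$ the threshold would be $H<1/4$ and you would lose the explanation for the hypothesis. Relatedly, Lemma A.5 of \cite{BLPP} (which you cite for the combinatorics) treats the integrand appearing in Lemma~\ref{Exp}, \emph{without} spatial derivatives; here each factor carries an extra derivative, adding $2H$ to the singularity exponent, so you need the corresponding variant rather than Lemma A.5 verbatim. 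Once this is corrected your argument goes through. Note finally that the specific form recorded in the Remark (with $\sup|B_l|$) is what the present paper actually invokes downstream in the proof of Proposition~\ref{Stability}, so if you replace AMP's bound by yours you should check that those applications still go through.
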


\begin{remark}
\bigskip In fact, the proof of Lemma 2.12 in (Amine, Mansouri, Proske) gives
the following bound%
\begin{eqnarray}
&&E\left[ \exp (\frac{\alpha }{\left\vert t-s\right\vert ^{2(1-3H)}}%
\left\Vert \int_{s}^{t}D_{x}b(u,B_{u}^{H})du\right\Vert ^{2})\right]  \notag
\\
&\leq &E\left[ \exp (\alpha C(H,d,T)\left\Vert b\right\Vert _{\infty
}^{2}(1+\sup_{0\leq l\leq T}\left\vert B_{l}\right\vert )^{2})\right]
<\infty .  \label{Lemma 2.12}
\end{eqnarray}%
for a constant $C(H,d,T)<\infty $.
\end{remark}

\bigskip We also want to apply the following special version of a lemma,
which is due to Garcia, Rodemich and Rumsey (see e.g. \cite%
{GarciaRodemichRumsey})

\begin{lemma}
\label{GarciaRodemichRumsey}Let $\Lambda $ be a compact interval endowed
with a metric $d$. Define $\sigma (r)=\inf_{x\in \Lambda }\lambda (B(x,r))$,
where $B(x,r):=\left\{ y\in \Lambda :d(x,y)\leq r\right\} $ denotes the ball
of radius $r$ centered in $x\in \Lambda $ and where $\lambda $ is the
Lebesgue measure. Assume that $\Psi :\left[ 0,\infty \right) \longrightarrow %
\left[ 0,\infty \right) $ is positive, increasing and convex with $\Psi
(0)=0 $ and denote by $\Psi ^{-1}$ its inverse, which is a positive,
increasing and concave function. Suppose that $f:$ $\left[ 0,\infty \right)
\longrightarrow \left[ 0,\infty \right) $ is continuous on $\left( \Lambda
,d\right) $ and let%
\begin{equation*}
U=\int_{\Lambda \times \Lambda }\Psi \left( \frac{\left\vert
f(t)-f(s)\right\vert }{d(t,s)}\right) dtds\text{.}
\end{equation*}%
Then%
\begin{equation*}
\left\vert f(t)-f(s)\right\vert \leq 18\int_{0}^{d(t,s)/2}\Psi ^{-1}\left( 
\frac{U}{(\sigma (r))^{2}}\right) dr.
\end{equation*}
\end{lemma}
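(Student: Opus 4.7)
The plan is to follow the classical chaining argument of Garcia, Rodemich and Rumsey, adapted to the metric setting. The starting observation is that the double integral $U$ controls, on average, the ``local variation'' of $f$ in the sense measured by $\Psi$, and a dyadic pigeonhole argument then extracts a chain of intermediate points along which $f$ varies in a quantitatively controlled manner.

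First I would fix $s,t \in \Lambda$ with $d_{0}:=d(s,t)>0$, introduce dyadic radii $r_{n}:=d_{0}/2^{n}$ for $n\ge 0$, and define the local integral
$$
J(x)\ :=\ \int_{\Lambda }\Psi\!\left( \frac{\vert f(x)-f(y)\vert }{d(x,y)}\right) dy,\qquad x\in \Lambda,
$$
so that Fubini yields $\int_{\Lambda }J(x)\,dx=U$.

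Second I would construct recursively a chain $t_{0}=t,\,t_{1},\,t_{2},\ldots$ with $t_{n}\to s$, chosen so that at each step $t_{n+1}$ lies in the ball $B(s,r_{n+1})$ (hence within $r_{n}+r_{n+1}$ of $t_{n}$), the local integral $J(t_{n+1})$ is at most $2U/\sigma(r_{n+1})$, and the single-step density satisfies $\Psi\!\left( \vert f(t_{n})-f(t_{n+1})\vert /d(t_{n},t_{n+1})\right) \le 2J(t_{n})/\sigma(r_{n+1})$. The existence of such $t_{n+1}$ follows from a double Markov inequality applied inside $B(s,r_{n+1})$: each of the two ``bad sets'' on which one of the two bounds fails has $\lambda$-measure strictly less than $\sigma(r_{n+1})/2$, so their union misses the ball, which itself has measure at least $\sigma(r_{n+1})$ by the defining property of $\sigma$.

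Third, combining the two pointwise bounds with the monotonicity of $\sigma$ gives
$$
\vert f(t_{n})-f(t_{n+1})\vert \ \le\ (r_{n}+r_{n+1})\,\Psi ^{-1}\!\left( \frac{4U}{\sigma(r_{n+1})^{2}}\right).
$$
Summing and using continuity of $f$ (so that $f(t_{n})\to f(s)$), I would obtain
$$
\vert f(t)-f(s)\vert \ \le\ \sum_{n\ge 0}(r_{n}+r_{n+1})\,\Psi ^{-1}\!\left( \frac{4U}{\sigma(r_{n+1})^{2}}\right),
$$
and finally compare this dyadic sum with the integral $\int_{0}^{d_{0}/2}\Psi ^{-1}(U/\sigma(r)^{2})\,dr$ via the monotonicity of $\sigma$ (so that $r\mapsto \Psi ^{-1}(U/\sigma(r)^{2})$ is decreasing) and the concavity of $\Psi ^{-1}$, which lets the inner factor $4$ be absorbed into a bounded multiplicative constant outside the integral. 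The explicit constant $18$ in the statement emerges from this bookkeeping together with the geometric series $\sum_{n\ge 0}(r_{n}+r_{n+1})=3d_{0}$.

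The main obstacle is the joint feasibility of the chain construction: one must ensure simultaneously that $t_{n+1}$ lies in the shrinking ball around $s$, realises at most an $(r_{n}+r_{n+1})$-step from $t_{n}$, and avoids both Markov exclusion sets. This is precisely where the infimal ball-measure function $\sigma$ plays its essential role, providing a lower bound on the available measure that is uniform in the centre of the ball. A secondary technical point is the extraction of the sharp constant $18$: a cavalier use of Markov's inequality would yield a strictly larger constant, so one has to calibrate the two exclusion thresholds carefully and exploit the concavity of $\Psi ^{-1}$ in converting the dyadic sum into the continuous integral appearing in the conclusion.
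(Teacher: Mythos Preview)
The paper does not give a proof of this lemma; it is stated in the Appendix as a known result due to Garcia, Rodemich and Rumsey, with a pointer to the literature (the reference \cite{GarciaRodemichRumsey}). So there is no ``paper's own proof'' to compare against.

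Your proposal reproduces the classical chaining argument that underlies the original Garcia--Rodemich--Rumsey inequality and its metric-space extensions, and the overall structure is correct: introduce the one-variable functional $J(x)$, build a dyadic chain $t_{0}=t,t_{1},\ldots\to s$ inside shrinking balls around $s$ by a double Markov exclusion using the uniform lower bound $\sigma(r)$ on ball measure, sum the telescoping increments, and convert the dyadic sum into the integral via monotonicity of $\sigma$ and concavity of $\Psi^{-1}$. This is exactly the approach one finds in the cited references.

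One caveat: you are candid that the precise constant $18$ ``emerges from this bookkeeping'' without carrying it out. A straightforward version of your argument (threshold $2$ in each Markov step, the identity $r_{n}+r_{n+1}=3r_{n+1}$, the comparison $r_{n+1}\,\Psi^{-1}(\cdot)\le 2\int_{r_{n+2}}^{r_{n+1}}\Psi^{-1}(\cdot)\,dr$, and the concavity bound $\Psi^{-1}(4x)\le 4\Psi^{-1}(x)$) actually yields $24$ rather than $18$; getting exactly $18$ requires a slightly sharper calibration of the exclusion thresholds or of the sum-to-integral step. Since the paper only uses the lemma as a black box and the exact constant is immaterial for its application (Proposition~\ref{Stability}), this is a cosmetic point, but if you want to match the stated constant you would need to tighten that part of the computation.
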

\paragraph*{}
\begin{align*}
&\textit{Emmanuel Coffie},&&\\
&\textit{Department of Mathematics and Statistics},&&\\ 
&\textit{University of Strathclyde},&&\\
&\textit{Glasgow, G1 1XH, UK}, &&\\
&\textit{emmanuel.coffie@strath.ac.uk}.&&\\\\
&\textit{Sindre Duedahl},&&\\
&\textit{Danske Bank, N-0250},&&\\
&\textit{Aker Brygge, Oslo, Norway},&&\\ 
&\textit{sidu@danskebank.com}.&&\\\\
&\textit{Frank Proske},&&\\
&\textit{Department of Mathematics},&&\\
&\textit{University of Oslo, N-0316},&&\\
&\textit{Blindern, Oslo, Norway},&&\\
&\textit{proske@math.uio.no}.&&
\end{align*}
\paragraph*{}

\end{document}